\newcommand{\Log}{{\mathbf{Log}}}
\def\hatX{{\widehat{X}}}
\def\CC{{\mathbb C}}
\def\int{{\rm int}}
\def\cln{{\rm cln}}
\def\res{{\rm res}}
\def\nor{{\rm nor}}
\def\lcm{{\rm lcm}}
\def\sat{{\rm sat}}
\def\ket{{\rm ket}}
\def\cProj{{\mathcal Proj}}
\def\Gm{{\bbG_m}}
\def\word{{\rm w\mbox{-}ord}}
\def\logord{{\rm logord}}
\def\wlogord{{\rm wlogord}}
\def\ocM{{\overline\calM}}
\def\cD{\calD}
\def\cC{\calC}
\def\cR{\calR}
\def\hatcO{{\widehat\cO}}
\def\hatcI{{\widehat\cI}}
\def\hatcN{{\widehat\cN}}
\def\ocJ{{\overline{\cJ}}}
\def\cO{\calO}
\def\cN{\calN}
\def\cF{\calF}
\def\cA{\calA}
\def\cJ{\calJ}
\def\cX{\calX}
\def\cP{\calP}
\def\cZ{\calZ}
\def\cI{\calI}
\def\cM{\calM}
\def\cB{\calB}
\def\cs{{\rm cs}}
\def\rmlog{{\rm log}}
\def\..{,\dots,}
\def\:{{\colon}}
\def\dashto{{\dashrightarrow}}
\def\QQ{{\bbQ}}
\begin{document}

\author{Michael Temkin}
\title{Relative and logarithmic resolution of singularities}
\address{\tiny{Einstein Institute of Mathematics\\
               The Hebrew University of Jerusalem\\
                Edmond J. Safra Campus, Giv'at Ram, Jerusalem, 91904, Israel}}
\email{\scriptsize{michael.temkin@mail.huji.ac.il}}
\keywords{Resolution of singularities, logarithmic geometry, stacks, weighted blowings up}
\thanks{This research is supported by  BSF grants 2014365 and 2018193, ERC Consolidator Grant 770922 - BirNonArchGeom.}

\setcounter{tocdepth}{1}

\maketitle
\tableofcontents

\section{Introduction}

These notes will constitute a chapter in a book on recent advances in resolution of singularities based on a series of minicourses given at an Oberwolfach seminar. My original plan was to provide an expanded version of lecture notes of a mini-course on logarithmic resolution and semistable reduction, but while working on this I decided to widen the perspective and discuss the non-logarithmic methods too -- both the classical method and the weighted (or dream) algorithm. This produces a certain intersection with the material of the other chapters, but I think that the profit is larger than this inconvenience. So, what does one gain? Primarily, when working on the exposition of the logarithmic methods I found a new perspective, which allows to relatively uniformly present and compare all four methods -- the classical one, the logarithmic one, the non-logarithmic weighted method and the logarithmic weighted method. This perspective is slightly new even in the case of the classical algorithm -- marked ideals $(\cI,d)$ are not used, but we blow up $d$-multiple smooth centers instead -- so we will start with a reinterpretation of the classical method (assuming a basic familiarity with it), and then add a logarithmic layer, a stack-theoretic layer, and a weighted layer, each time either obtaining a new algorithm or studying where an attempt fails and preparing to add one more layer to fix this failure. Not only does this presentation follows the order of discovery of these methods and layers, but I hope it also makes the exposition more accessible, since we try to introduce new tools one after another rather than all at once. Second, in any case the weighted logarithmic method of Quek should be covered by this chapter, so it makes sense to start with the non-logarithmic dream algorithm, and only then add the logarithmic layer, indicating the needed logarithmic adjustments.

Our goal is to introduce all relevant notions, constructions and techniques, and to formulate all main results, including all important intermediate results. There are no proofs in the notes. Some easier results are given as exercises and provided with hints, more difficult theorems are provided with references to the literature and a short discussion of main ideas of the proof. So these notes can be viewed as a light guide or a companion for reading research papers, where the new methods were constructed: \cite{ATW-principalization}, \cite{ATW-relative}, \cite{ATW-weighted} (see also \cite{McQuillan}, though it uses a different language) and \cite{Quek}.

\subsection{History and motivation}
Until a few years ago there was known an essentially unique basic functorial method for principalization of ideals and resolution of varieties in characteristic zero, to which we refer in the sequel as ``the classical method''. It was distilled during decades from Hironaka's original method from \cite{Hironaka}, and in this joint and very long effort took part Hironaka himself (idealistic exponents), Giraud (maximal contact), Villamayor and Bierstone-Milman (canonicity), W{\l}odarczyk (smooth functoriality), and others. In particular, until 2017 it was not clear if there exist other algorithms, especially the ones which are simpler, faster or possess better functorial properties. These questions, fundamental by themselves, are especially important in view of the fact that a similar method fails in two other classical desingularization problems: resolution in positive characteristic and resolution of vector fields (in characteristic zero). So, enriching the pool of ideas and techniques can be critical in order to achieve a substantial progress in these questions.

The first advance beyond the classical settings was done when a logarithmic analogue of the classical method was constructed for varieties in \cite{ATW-principalization} and then extended to morphisms (or semistable reduction theorems) in \cite{ATW-relative}. The original motivation for this project was twofold: 1) we wanted to obtain a functorial semistable reduction theorem, which will extend, in particular, to any valuation ring, not necessarily discrete, 2) we wanted to clarify the role of log structures in the classical algorithm, where it was visible but a bit opaque. Both lines pointed in the same direction:

1) Over a general valuation ring the best one can hope for is the semistable reduction in the sense of Abramovich-Karu (see \cite{AK} and \cite{semistable}), and this indicates that one has to work with arbitrary fs log structures and not only those with free monoids $\oM_x$. In addition, the notions of smoothness, derivations, smooth functoriality, etc. should be replaced by log smoothness, log derivations, log smooth functoriality, etc.

2) In this context it was natural to seek for principalization on general log smooth varieties as opposed to smooth varieties with an snc boundary (or the induced log structure).

All in all, one is led to replace the classical setup by the logarithmic one, and the rest, to our surprise, was rather imposed upon us (though not easy to discover). In particular, it turned out that the only sufficiently general class of logarithmic centers that preserve log smoothness are intersections of subvarieties with monomial centers. A relevant log algorithm is even simpler than the classical one because no induction on the log stratification is needed anymore, but it got stuck at one place and insisted that we also blow up centers generated by roots of monomials. Such a blowing up may be not log smooth, and to resolve this we had to refine such blowings up to stacks -- adding the stack-theoretic layer is the solution which seemed to us technical but unavoidable. In fact, we just introduced a stack-theoretic refinement of the classical weighted blowing up (with a certain pattern of weights). Note that analogous obstacles were discovered earlier in resolution of vector fields, and the solution also was to consider non-representable weighted blowings up (or any equivalent tool playing the same role), see \cite{Panazzolo} and \cite{McQuillan-Panazzolo}.

Once a new pool of blowings up that preserve smoothness (in the setting to stacks) was discovered, the next natural question was to study which improvement to the classical algorithm can be obtained using them all. It was independently studied in \cite{McQuillan} (following \cite{McQuillan-Panazzolo}) and in \cite{ATW-weighted} (following \cite{ATW-principalization}) and, as in the classical case, led to the same algorithm, despite different description and justification. Quite to our surprise, the natural principalization algorithm in this setting does not involve any log structure, does not have any memory, uses a simple (in fact, classical) multi-order invariant and improves it after each single weighted blowing up. Moreover, even the resulting resolution algorithm has the same properties, and it is really a non-embedded method. Finally, such an algorithm was believed not to exist (and it dose not exist in the classical setting, see \S~\ref{nodream}). For all these reasons we sometimes call it a dream algorithm, despite the fact that (unfortunately for us) some experts consider it as a variation of old ideas, which does not contain anything essentially new...

Finally, one may ask if a dream algorithm also exists in the logarithmic situation, in particular, leading to a fast and simple resolution of schemes with divisors and semistable reduction theorem. The answer is yes. The absolute case was worked out in \cite{Quek}, and it seems certain that the similar method will also apply in the relative setting.

\subsection{An overview}
Now let us outline the content of these notes.

\subsubsection{The general principles}
In Section \ref{principles} we formulate the general principles that apply to all four methods known so far. In fact, this is an excellent time for such a classification -- we already have a few known functorial methods (unlike the past decades, when only the essentially unique classical method was known), but still quite a few methods, so that such a generalization is possible... In brief, quite surprisingly each algorithm seems to be quite determined by what we call the framework of the method: the class of geometric objects one works with, the relevant notions of smoothness and derivations, the admissible centers one can blow up without destroying the smoothness, and a primary classification of admissible centers by a (partially or totally) ordered set whose elements are called orders, log orders, weighted orders, etc. Resolution of $Z$ is always deduced from principalization of $I_Z\subset\cO_X$ on a manifold $X$ in which $Z$ is (locally) embedded, and the principalization of $\cI\subseteq\cO_X$ is deduced from appropriate order reduction of $\cI$, in which one iteratively blows up an admissible center $\cJ$ which contains $\cI$ and has maximal possible order, and then factors out the pullback of $\cJ$ from the pullback of $\cI$.

It turns out that what is usually viewed as the main machinery, including the heavy one -- maximal contacts, coefficient ideals, homogenization, independence of the embedding, etc. -- generalizes quite easily to any setting, once an appropriate framework is chosen. And what we viewed as technical aspects in \cite{ATW-principalization} and \cite{ATW-weighted} -- extending varieties to DM stacks, introducing an appropriate formalism of new centers, such as $(x^2,y^{3/2})$, etc. -- seems to be the main choices which one has to carefully design. A wrong or insufficient choice often leads to an ''almost'' algorithm which gets stuck at an unexpected innocently, or technically looking, point.

We finish Section \ref{principles} with an illustration of these general principles on the case of the classical algorithms -- first we show how far one can go without boundaries, and then add this additional layer to the framework. In this case, one gets stuck because the order can jump on the maximal contact, so one has to consider order reductions of non-maximal order, and Hironaka's insight was that this can be done once the excess of the exceptional divisor is well controlled by the boundary. If one uses precise weighted centers, the exceptional divisor is cleared off in a more precise way, and this explains why the basic weighted algorithm uses no boundary (or log structure) at all; this is the only method known so far which does not use log structures.

A familiarity with the classical methods is assumed in our exposition, so we refer to chapter \cite{classical} and to the usual sources, such as \cite{Kollar} and \cite{Wlodarczyk}. Also, a very short and clear exposition (with some proofs omitted) can be found \cite{Bierstone-Milman-funct}.

\subsubsection{The logarithmic methods}
In the next two sections we explain in detail the logarithmic analogue of the classical method, which was constructed in \cite{ATW-principalization}. In Section~\ref{firstsec} we proceed as much as one can within the framework of log varieties (as the reader can imagine, this was precisely our first line of research when working on \cite{ATW-principalization}). The slogan of this part is to add ``log'' everywhere: log varieties, log smoothness, log smooth functoriality, log derivations and the associated notions of log order, log maximal contact and log coefficient ideal. The main novelty is that one can consider centers, which are powers of the centers of the form $(t_1\..t_n,u^{p_1}\..u^{p_r})$, where $t_i$ are regular parameters and $p_i$ are arbitrary monomials.

It turns out that in this fashion one almost obtains a perfect algorithm, which fails only at one point -- this time the failure also happens in the log order reduction of non-maximal log order $d$, but, in addition, the problem only pops up when the log order of $\cI$ is infinite. The solution this time is to allow blowings up of Kummer centers defined also by Kummer monomials like $u^{p/d}$. This requires to extend both the formalism of such ideals and of their blowings up. The first task is solved in the Kummer \'etale topology and the second one forces one to consider stacks and non-representable modifications. The theory of Kummer centers and blowings up is developed in the beginning of Section~\ref{sec4}, and then the same logarithmic algorithm constructed earlier works perfectly well.

Section \ref{morsec} is devoted to extending the absolute logarithmic methods to morphisms, following \cite{ATW-relative}. In fact, precisely the same algorithm works once one replaces absolute log derivations by relative ones. The only serious novelty is that one should take base changes into account. On the positive side, the algorithm is compatible with arbitrary base changes with a log regular source -- a new type of functoriality. However, there is no free lunch, and another new feature is that the algorithm can fail, and in order for it to succeed one has to modify the base first. Non-surprisingly, once again the failure can happen at the ``simple'' monomial stage. Much more surprisingly is that we could not find a simple way to prove a monomialization theorem, which guarantees that the monomial step succeeds after a large enough base change, see \S\ref{monomsec}. The existing proof is non-canonical, and we expect further progress to be possible.

\subsubsection{The weighted methods}
Finally, in Section \ref{dreamsec} we construct weighted algorithms which blow up arbitrary $\bbQ$-regular (or weighted) centers both in the non-logarithmic and logarithmic settings. We follow \cite{ATW-weighted} in the non-logarithmic case and we follow \cite{Quek} in the logarithmic ones. Note also that the same algorithm was constructed by McQuillan in \cite{McQuillan}, but the presentation uses a much more coordinate dependant language, so it is rather far from ours. The main idea is that we would like to blow up centers like $\gamma=(t_1^{q_1}\..t_n^{q_n})$ which might lead to a singular output, but this can be resolved by blowing up an appropriate root $\gamma^{1/n}$, obtaining a smooth stack-theoretic refinement of $Bl_\gamma(X)$. In order to make sense of things like $\gamma^{1/n}$ one has to introduce a new formalism of generalized ideals. In fact there exist a few ways to deal with this -- valuative $\bbQ$-ideals, $\bbQ$-ideals (which are equivalent to Hironaka's characteristic exponents) and Rees algebras, and we discuss them and relations between them in the first three subsections of \S\ref{dreamsec}.

The paradigm of blowing up general $\bbQ$-regular ideals leads to what we call dream algorithms which introduce a simple invariant -- the weighted (log) order, and improve it by a single blowing up along an $\cI$-admissible $\bbQ$-regular center of maximal possible weighted (log) order. This results in what we call dream algorithms which require no history and simply repeat the same basic operation of weighted (log) order reduction. In addition, one obtains a non-embedded resolution which acts in the absolutely same manner -- one simply blows up the unique maximal $\bbQ$-regular center contained in the scheme, and this blowing up improves the invariant. The logarithmic weighted algorithm is constructed very similarly but using the logarithmic setting. The main difference is that one also has to add a monomial part to the $\bbQ$-regular center, and one should take such a part as small as possible.

\subsubsection{Conventiones}
Unless stated otherwise, we will always work over a ground field $k$ of characteristic zero.

By a {\em blowing up} we always mean a morphism $X'=Bl_\calI(X)\to X$ with the ideal $\calI\subseteq\calO_X$ being part of the datum. Thus, the same morphism can underly different blowings up. By a slight abuse of language, saying that a morphism $f$ is a blowing up without specifying the center we always mean that $f$ underlies a blowing up along some center (in particular, it is projective).

All log schemes are fs, see \cite{lognotes}.

\section{General principles}\label{principles}
In this section we will discuss principles and features shared by all known functorial resolution algorithms. In particular, we choose a presentation which might look a bit strange to the reader familiar only with the classical algorithm, but it extends naturally to other settings. We will end the section with a short description and re-interpretation of the classical algorithm in the new framework.

\subsection{Frameworks}

\subsubsection{Modifications}
By a {\em modification} we mean a proper morphism $X'\to X$ which establishes an isomorphism $U'= U$ of dense open subschemes (subspaces, substacks, etc.) This definition applies to non-reduced objects as well, though we will usually work with generically reduced ones. For example, a blowing up is a modification if and only if its center is nowhere dense.

\subsubsection{Basic choices of an algorithm}\label{basicsec}
Each algorithm makes a few basic choices that we list below and call the {\em framework} of the algorithm.

\begin{itemize}
\item[(0)] The category $\gtC$ of geometric objects the algorithm deals with, that will be called {\em spaces}, and the corresponding topology. For example, varieties over a fixed or varying fields, schemes with enough derivations, analytic spaces, stacks, log schemes, etc. The topology can be Zariski, \'etale, analytic, etc.
\item[(1)] The class of {\em regular} spaces that will be called {\em manifolds}, and the class of {\em regular} morphisms. For example, smooth varieties, regular schemes, log smooth log varieties, etc., and smooth morphisms of varieties over a fixed field, regular morphisms between varieties over varying fields, log smooth morphisms of log varieties, etc.
\item[(2)] The class of {\em admissible modifications} $f\:X'\to X$ with $X$ and $X'$ manifolds. It will always be a variant of a blowing up along an {\em admissible center} $\cJ$ (or simply a {\em center}), so we will use the notation $X'=Bl_\cJ(X)$. In particular, the pullback $\cJ\cO_{X'}$ is always an honest invertible ideal which defines the {\em exceptional divisor} $E_f\subset X'$. The center itself is an ideal in an appropriate topology, which can be rather fancy. For example, Kummer \'etale topology or $h$-topology.
\item[(3)] A {\em primary invariant} which takes values in a totally or partially ordered set and more or less classifies different types of admissible centers. We call it the {\em order} of the center and extend to arbitrary ideals as follows: the order $\ord_X(\cI)$ of $\cI$ on $X$ is the maximal order of an {\em $\cI$-admissible} center, that is, an admissible center $\cJ$ such that $\cI\subseteq\cJ$. Examples include the order of ideal, the log order and the weighted order $(d_1\..d_n)$ of a weighted center $(t_1^{d_1}\..t_n^{d_n})$.
\item[(4)] A theory of derivations on manifolds. This amounts to choosing large enough sheaves of derivations one works with. For example, $k$-derivations or absolute derivations over $\bbQ$.
\end{itemize}

\begin{rem}
(i) Choices (0)--(2) will be called the {\em basic framework} of the method. Choices (3) and (4) seem to be dictated by the basic framework, at least to a large extent.

(ii) Concerning the choice of admissible blowings up, the general principle is that one should try to choose as large a class as possible with the restriction that the centers $\cJ$ should possess a simple explicit description. In the cases we know, it seems that the framework essentially dictates a unique natural algorithm corresponding to it, and the larger the class of admissible blowings up is, the better algorithm one obtains. Even in the classical setting it is beneficial to consider centers of the form $\cJ^d$, where $\cJ=I_V$ defines a smooth subvariety $V$. Since $Bl_{\cJ^d}(X)=Bl_\cJ(X)$ this might look as a simple bookkeeping of the order $d$ inside the center, but we will argue that in this form the description of the algorithm becomes both more natural and more similar to the logarithmic and weighted algorithms, e.g. see Remark~\ref{dcenterclassical}.
\end{rem}

\subsubsection{Functoriality}
All methods we will consider are functorial in the following strong sense: they are compatible with surjective regular morphisms. In particular, this implies that the method can be constructed \'etale locally (or even smooth locally), and once this is done the method globalizes by \'etale descent. This fact simplifies arguments tremendously as they become essentially of a local nature (see also \S\ref{synchr}). Compatibility with non-surjective regular morphisms holds on the level of a morphism, but not a finer structure of the principalization sequence of blowings up, and we will touch on this delicate issue later.

\begin{rem}
(i) Historically, the first {\em canonical} algorithms (i.e. compatible with automorphisms) were constructed by Bierstone-Milman in \cite{Bierstone-Milman} and Villamayor in \cite{Villamayor}, and W{\l}odarczyk was the first to emphasize on smooth functoriality and use it in an essential way in constructing the algorithm, see \cite{Wlodarczyk}.

(ii) Smooth functoriality implies that the algorithm is equivariant with respect to any group scheme action, as any group scheme in characteristic zero is smooth.
\end{rem}

\subsection{Principalization and resolution}
The main result of each desingularization method is an appropriate principalization theorem, and as a consequence one obtains a non-embedded desingularization theorem.

\subsubsection{Functorial resolution}
Let $\bfP$ be a class of regular morphisms (usually all regular morphisms in $\gtC$). By a {\em $\bfP$-functorial resolution} on $\gtC$ we mean a rule $\cR$ which associates to each object $X$ of $\gtC$ a modification $\cR(X)\:X_\res\to X$ with a regular source in such a way that $Y_\res=Y\times_XX_\res$ for any $\bfP$-morphism $Y\to X$ in $\gtC$. The main resolution theorem for a given framework asserts that such a resolution exists. In addition, the desingularization morphisms $\cR(X)$ are projective (or non-representable global quotients of projective morphisms). In fact, they are naturally equipped with a structure of a composition of explicit blowings up, but this factorization is only compatible with surjective regular morphisms.

\subsubsection{Functorial principalization}
Let $X$ be a manifold and $\cI$ an ideal on $X$. An admissible blowing up $X'=Bl_\cJ(X)\to X$ is called {\em $\cI$-admissible} if $\cI\subseteq\cJ$. In such a case, $\cI\cO_{X'}$ is contained in the invertible ideal $I_E=\cJ\cO_{X'}$, hence the {\em transform} $\cI'=\cI\cO_{X'}I_E^{-1}$ is defined. An {\em $\calI$-admissible sequence} $f_\bullet\:X_n\dashto X_0=X$ is a sequence of $\calI_i$-admissible blowings up $f_i\:X_{i+1}\to X_i$ such that $\calI_0=\calI$ and $\calI_{i+1}$ is the transform of $\calI_i$. Such a sequence is called a {\em principalization} of $\calI$ if $\calI_n=\calO_{X_n}$ is trivial.

By a {\em $\bfP$-functorial principalization} on $\gtC$ we mean a rule which associates to any ideal $\cI$ on a manifold $X$ in $\gtC$ a principalization $\cP(\cI)\:X_n\dashto X_0=X$ of $\cI$ in such a way that for any $\bfP$-morphism $Y\to X$ in $\gtC$ and $\cI'=\cI\cO_Y$ the sequence $\cP(\cI')$ is obtained from the pullback $\cP(\cI)\times_XY$ of $\cP(\cI)$ by omitting all trivial blowings up. The main principalization theorem asserts that such a principalization exists.

\begin{rem}
If $Y\to X$ is surjective, then $\cP(\cI')=\cP(\cI)\times_XY$, but in general the blowings up along centers whose image in $X$ is disjoint from the image of $Y$ are pulled back to trivial blowings up and hence ignored.
\end{rem}

\subsubsection{Synchronization}\label{synchr}
The above remark indicates that the principalization algorithm is not of local nature in the strict sense. For example, if $X=\cup_{i=1}^nX_i$ is an open covering, one cannot reconstruct $\cP(\cI)$ from $\cP(\cI|_{X_1})\..\cP(\cI|_{X_n})$ without an additional synchronization data -- what are the trivial blowings up we removed after the restriction. However, if $X'=\coprod_{i=1}^nX_i$ and $\cI'=\cI\cO_{X'}$, then all these blowings up are kept in the sequence $\cP(\cI')$, and as we remarked earlier $\cP(\cI)$ is easily reconstructed from $\cP(\cI')$. Informally speaking, when principalizing $\cI'$ the method has to compare the singularities of $\cI|_{X_i}$ and decide which one is blown up earlier (or simultaneously), entering trivial blowings up at the other places. This is precisely the needed synchronization datum.

\begin{rem}
(i) The above argument shows that it is important to consider simultaneous principalization on disconnected manifolds, and the method is only ``local up to disjoint unions'' or quasi-local accordingly to the terminology of \cite{semistable}.

(ii) Another way to establish a synchronization of local constructions is by use of an explicit invariant, for example, as \cite{Bierstone-Milman} do. In fact, using the trick with disjoint unions is equivalent to the use of an abstract invariant, see \cite[Remark~2.3.4]{Temkin-qe}.
\end{rem}

\subsubsection{The re-embedding principle}
There is one more important functoriality property satisfied by all known methods  called the re-embedding principle. We say that a principalization method $\calP$ on $\gtC$ satisfies the {\em re-embedding principle} if for any closed immersion of manifolds {\em of constant codimension} $X'\into X$ an ideal $\cI'$ on $X'$ and its preimage $\cI\subseteq\cO_X$ the blowing up sequence $f_\bullet=\calP(\calI)$ is obtained by pushing forward the blowing up sequence $f'_\bullet=\calP(\calI')$, that is, the centers $\cJ_i$ of $f_\bullet$ are the preimages of the centers $\cJ'_i$ of $f'_\bullet$ and (by induction on the length) each $X'_i\into X_i$ is the strict transform of $X'\into X$.

\subsubsection{Reduction to principalization}
In all settings the appropriate desingularization theorem is a relatively easy corollary of the principalization theorem. Loosely speaking the general principle can be formulated as follows:

\begin{principle}
If there exists a $\bfP$-functorial principalization on $\gtC$ which satisfies the re-embedding principle, then there exists a $\bfP$-functorial desingularization $\calR$ on the class of locally equidimensional generically reduced spaces from $\gtC$ which locally possess a closed immersion into a manifold.
\end{principle}

The embeddability assumption is automatic for varieties, formal varieties or analytic spaces, and is only relevant for general excellent schemes. The local equidimensionality condition is used to construct an embedding of constant codimension. The argument in all settings is essentially the same: to resolve a space $Z$, one locally embeds it into a manifold $X$ and constructs the resolution of $Z$ from the principalization $\calI_Z$. Loosely speaking, before blowing up a generic point $\eta\in Z$ the principalization has to guarantee that it is a generic point of an admissible center, and in all methods this amounts to resolving the Zariski closure of $\eta$. Moreover, because of the codimension assumption, all generic points of $Z$ are blown up simultaneously at some blowing up $X_{i+1}\to X_i$ and its center contains a component which is the strict transform of $Z$. In particular, $Z_i\to Z$ is the induced desingularization of $Z$. Independence of the embedding in all methods is proved by use of the re-embedding principle and a simple computation showing that an embedding of minimal possible codimension is unique \'etale locally (or formally locally).

\begin{rem}\label{strong}
(i) The desingularization morphism $Z_i\to Z$ is naturally a composition of blowings up $Z_{j+1}\to Z_j$, $0\le j\le i-1$ with centers $\cJ_j\cO_{Z_j}$. In the classical resolution each center $V_j=V(\cJ_j)$ is smooth, but the intersection $V_j\times_{X_j}Z_j$ can be singular. In particular, the factorization of $Z_i\to Z$ into a composition of blowings up is not too informative.

(ii) So-called {\em strong resolution} methods construct a resolution which is a composition $Z_i\dashto Z_0=Z$ of blowings up along smooth centers. Perhaps the main advantage of this is that for any closed immersion $Z\into Y$ into a manifold the resolution automatically extends to a modification of manifolds $Y_i\to Y$ with $Z_i$ a closed subscheme in $Y_i$: just consider the pushout $Y_i\dashto Y$ of the sequence $Z_i\to Z$.

(iii) The only known method to construct strong resolution is to force the condition that $V_j\subseteq Z_j$ in the principalization sequence, and hence the whole sequence $X_i\dashto X$ is the pushforward of the sequence $Z_i\dashto Z$. In the classical case this is achieved by serious additional work building on the usual principalization (the so-called presentation of the Hilbert-Samuel function in \cite{Bierstone-Milman}). We will see that in the weighted desingularization methods strong factorization is achieved just as a by-product.
\end{rem}

\subsubsection{The miracle}
The reduction of resolution to a seemingly very different principalization problem is usually viewed as a brilliant trick if not a miracle. Nevertheless, we claim that this is not so surprising. An equivalent formulation of existence of resolution is that manifolds are cofinal among the set of all modifications of a generically reduced space $X$. When one studies modifications of a manifold $X$, it is hard (if not impossible) to explicitly describe all modifications $X'\to X$ with $X'$ a manifold, so it is natural to only consider basic explicit modifications of this form -- admissible blowings up and their composition. The principalization theorem asserts that for any ideal $\cI$ there is an admissible sequence $X'\dashto X$ which principalizes $\cI$, and by the universal property of blowings up this happens if and only if the morphism $X'\to X$ factors through $Bl_\cI(X)$. By Chow's lemma blowings up form a cofinal family of modifications of $X$, hence the principalization just asserts that admissible sequences form a cofinal family of modifications of a manifold. In this form, it is rather natural to expect that the theorems are related and the principalization theorem is finer.

\begin{rem}
One may also wonder if the following weak factorization conjecture holds: any modification of manifolds can be factored into a composition of admissible blowings up and blowings down. This conjecture provides the next level of depth. In the classical case the only known argument deduces it with a large amount of work from $\Gm$-equivariant principalization in the next dimension. In other settings this is still open, though we expect that an analogous approach with birational cobordisms and $\Gm$-equivariant principalization should work there too. It would be especially interesting to check this for semistable models and morphisms.
\end{rem}

\subsubsection{Order reduction}
In first approximation, the principalization is achieved by successive order reduction procedures: reduce the order $d=\ord_X(\cI)$ of $\cI$ by blowing up centers of order $d$. In the non-weighted algorithms one reduces this problem to an order reduction on a maximal contact hypersurface. However, the order can jump under this reduction, so for inductive reasons one also has to solve the problem of reducing the order of $\cI$ below $e$ only by blowings up $e$-centers for any fixed value $e\le d$ of the invariant. This results in the accumulation of exceptional divisors in the transform, and one has to use a log structure to control this -- guarantee that the exceptional divisor is monomial and deal with it mainly by combinatorial methods. In fact, this is the only place in the algorithms, where some flexibility can take place.

In weighted algorithms the order reduction is done by a single weighted blowing up, so they are what we call {\em dream algorithms} -- no history is needed, each blowing up is independent of the rest and reduces the invariant further. However, the argument that a unique maximal $\cI$-admissible center exists is rather complicated and, again, uses the theory of maximal contact and homogenization. In particular, it completely fails in positive characteristic.

\subsection{The classical algorithm: a first attempt}\label{classicalfirst}
The classical method was already discussed in detail in chapter~\cite{classical}, so we assume that the reader is familiar with the main ideas and constructions, and our goal is to briefly re-interpet it within the general paradigm we described earlier in this section. Later we will develop the logarithmic algorithm pretty much in the same spirit. In \S\ref{classicalfirst} we will discuss what can be done without the boundary and where this attempt fails. However, all constructions we are going to describe are relevant, and in the next subsection, we will just add the boundary as an additional layer of the framework. For simplicity, we work with $k$-varieties.

\subsubsection{The framework}
One considers the category $\gtC$ of varieties over $k$, manifolds are smooth varieties and the algorithms will be smooth functorial. Admissible centers are of the form $\cJ=I_V^d$, where $I_V\subset\cO_X$ is the ideal of a submanifold $V\subset X$ and $d\ge 1$. We call such a center a $d$-center. An admissible blowing up is the usual blowing up of the center. The derivation theory is given by the sheaves $\cD_X=\Der_{X/k}$ of $k$-derivations and the sheaves $\cD_X^{(\le d)}$ of differential $k$-operators of order at most $d$. The primary invariant of the center is just the {\em multiplicity} $d$ of $\cI^d$.\footnote{We use the notion of the multiplicity of a center instead of the order to avoid confusion with the general order of ideals it is used to define. This is justified because the classical multiplicity of $V(\cI^d)$ at any its point is $d$.} The order of an arbitrary ideal $\cI$ at $x$ is the maximal $d$ such that $\cI_x\subseteq\cI_V^d$. Clearly, it suffices to take the center $V=\{x\}$, and then we obtain the usual definition of the order.

\begin{rem}\label{dcenterclassical}
Classically one only considers 1-centers, works with marked ideals $(\cI,d)$ and uses a $d$-transform after blowings up along a smooth center $V$ that lies inside the locus of points where the order is at least $d$. This is equivalent to our admissibility condition $\cI\subseteq \cJ=I_V^d$ and using the usual transform with respect to the blowing up along $\cJ$. So, we just provide a slightly different interpretation of the classical constructions.
\end{rem}

\subsubsection{Derivations}
Derivations of ideals provide a convenient way to describe all basic ingredients of the algorithm (except the boundary):

\begin{itemize}
\item[(1)] The maximal order $\ord_X(\cI)$ of $\cI$ on $X$ is the minimal number $d$ such that $\cD_X^{(\le d)}(\cI)=\cO_X$. The order of $\cI$ at a point $x\in X$ is computed similarly.
\item[(2)] A maximal contact to $\cI$ at $x$ is any closed smooth subscheme $H\into X$ which locally at $x$ is of the form $V(t)$ with $t\in\cD_X^{(\le d-1)}(\cI_x)$.
\item[(3)] The homogenized coefficient ideal is the homogenized weighted sum of derivations, which are weighted by their orders: $$\cC(\cI)=\sum_{a\in\bbN^d\ :\ \ \sum_{i=0}^{d-1}a_i(d-i)\le d!}\ \ \prod_{i=0}^{d-1}\left(\cD_X^{(\le i)}(\cI)\right)^{a_i}.$$
\end{itemize}

\begin{rem}\label{Rem:coefficient-ideals}
Usual coefficient ideals are defined using only the corresponding powers of the derivations of $\cI$, but the homogenized version is integral over it, and hence can be used instead. The homogenized coefficient ideals were introduced by Koll\'ar, see \cite[\S3.54]{Kollar}. They subsume the homogenization procedure of W{\l}odarczyk, see \cite[\S2.9]{Wlodarczyk}.
\end{rem}

\subsubsection{Order reduction}
If $e\le d=\ord(\cI)$, then an {\em order $e$-reduction of $\cI$} is an $\cI$-admissible sequence of blowings up along $e$-centers $X_n\dashto X_0=X$ such that $\ord_{X_n}(\cI_n)<e$.

\begin{rem}
Usually one talks about order reduction of a marked ideal $(\cI,d)$ by blowing up smooth centers, and $d$ indicates which power of the exeptional divisor to factor out on each transform. The two languages are equivalent.
\end{rem}

\subsubsection{The maximal order case}
The main loop of classical principalization iteratively performs order reduction with $e=d$ -- the so-called maximal order case. In this case, the theory of maximal contact implies that for any maximal contact $H$ (which exists locally) pushing out from $H$ to $X$ establishes a one-to-one correspondence between order $d$-reductions of $\cI$ and order $d!$-reductions of $\cC(\cI)|_H$, so we can apply induction on dimension. Moreover, for any other maximal contact $H'$ the restrictions of $\cC(\cI)$ to $H$ and $H'$ can be taken one to another by an \'etale correspondence, hence the construction is independent of choices and globalizes.

\subsubsection{General order $e$-reduction}
It can happen that $\ord_H(\cC(\cI)|_H)>d!$ and so the induction forces one to also consider the non-maximal order case. The trick is to reduce this to the maximal order case by controlling the accumulated exceptional divisor, and for this job one has to add one more layer to the framework -- the boundary.

\subsection{The classical algorithm: the boundary}

\subsubsection{The framework}
In fact, instead of manifolds $X$ one works with pairs $(X,E)$, where the {\em boundary} (or the exceptional divisor) $E$ is an snc divisor. In some versions, one also orders components of $E$ by a history function. One restricts the set of the admissible centers $\cJ=I_V^d$ by requiring that $V$ has simple normal crossings with $E$, and then the boundary $E'$ on $X'=Bl_\cJ(X)$ is combined from the preimage of $E$ (the old boundary) and the exceptional divisor of $X'\to X$ (the new boundary). This guarantees that $E'$ is snc. However, one still has to struggle with two complications mainly caused by the fact that one uses all derivations rather than those that preserve $E$, so all constructions are not well-adapted to $E$ and one has to fix this essentially by hand. Fortunately, this can be done by two tricks.

\subsubsection{Removing the old boundary}
The first complication is that a maximal contact $H$ does not have to be transversal to $E$, so $E|_H$ does not have to be a boundary. This is resolved by separating $H$ and the old boundary by iterative order reduction of $\cI$ along the maximal multiplicity strata of the old boundary. This trick forces one to introduce a secondary invariant $s$ -- the number of the old boundary components remaining since a maximal contact was created. As a result, the (non-normalized, see below) total invariant is $(d_0,s_0;d_1,s_1,\dots)$ rather than just the string of orders $(d_0,d_1,\dots)$. 

\subsubsection{The normalized degrees}
In addition, one usually normalizes the orders by $q_i=d_i/\prod_{j<i}(d_j-1)!$ so that $q_0=d_0$, but the other degrees can be non-integral. In particular, this choice is made by Bierstone and Milman, see \cite{Bierstone-Milman}, and it is made in \cite{ATW-weighted}, but not in \cite{ATW-principalization}. It is more natural, for example, $(a_1,0;a_2,0;\dots;a_n,0;\infty)$ is the normalized invariant of $V(t_1^{a_1}+\dots t_n^{a_n})$. We will use the normalized choice also in the logarithmic and weighted algorithms. In the latter case, this is a ``no-brainer" choice.

\subsubsection{The companion ideal}
Order $e$-reduction of $\cI$ is done by splitting it into the product $\cI^\cln\cN$ of the maximal invertible monomial factor $\cN$ and the remaining non-monomial part, which will be called {\em clean}. Until $\ord_X(\cI^\cln)\ge e$ we simply apply maximal order reduction to $\cI^\cln$. To proceed further we should take into account $\cN$ and the fact that the order is affected by both $\cI^\cln$ and $\cN$. Fortunately, the contribution of $\cN$ is locally constant along the strata of $E$, so again one can design an order reduction by a careful work along the strata. Technically, this is done by a trick with the companion ideal. Finally, when $\cI^\cln=\cO_X$ one resolves $\cI=\cN$ by purely combinatorial methods.

\begin{rem}
(i) The classical algorithm has more complicated structure than the recently discovered ones. Probably, the main reason for this is that the boundary is not fully built into the framework -- it is not respected by the derivations and its categorical meaning is not so evident. In a sense, it is an additional layer added in an ad hoc manner, and various incompatibility problems are also solved ad hoc.

(ii) It is observed in \cite{Bierstone-Milman-funct} that the sheaf $\cD_{(X,E)}$ of logarithmic derivations fits various constructions, including the chain rule for transform of derivations, much better than $\cD_X$, but one still has to work with $\cD_X$ because it computes the order.
\end{rem}

\subsubsection{The log structure}
In fact, boundaries do have a categorical interpretation -- what one really uses in the classical principalization is the log structure $\cM(\log(E))$ defined by $E$, rather than a divisor. In particular, monomial ideals are defined using the log structure and admissible blowings ups $(X',E')\to(X,E)$ are morphisms of log schemes, but not of scheme-divisor pairs. Thus, manifolds in the classical principalization are in fact log smooth log schemes $X$ with free monoids $\ocM_x$ (equivalently, $X$ is smooth). Furthermore, the induced resolution $f\:Z_\res\to Z$ automatically satisfies the following condition: the exceptional divisor of $f$ is an snc divisor. Indeed, when the strict transform $Z_\res$ of $Z$ is blown up during the principalization of $I_Z$ on a manifold $X$, it has simple normal crossings with the boundary $E$ and is not contained in $E$, hence $E|_{Z_\res}$ is snc. Finally, starting principalization with a non-empty $E$ one can also resolve embedded log schemes $Z$ such that the monoids $\ocM_z$ are free.

We have illustrated that the classical algorithm possesses certain logarithmic aspects. However, it is not functorial for log smooth morphisms, it works only with log structures of a special form, and it does not use log derivations. The natural question whether one can remove the restrictions on log structures and work log smooth functorially was one of the main motivations for a research which led us to the discovery of the logarithmic algorithm, constructed in the next sections.

\section{Resolution of logarithmic schemes: a first attempt}\label{firstsec}
Logarithmic algorithms are obtained by switching to the logarithmic framework: log schemes, log smooth functoriality, log derivations, etc. This makes the framework more complicated, but has numerous advantages: the principalization algorithm becomes simpler and faster, the functoriality is stronger, the method extends to morphisms. The main technical complication is that one is forced to extend the category to stacks or, alternatively, consider cobordant blowings up \cite{weighted}, which increase the dimensions.

In this section we will describe all desingularization tools provided by log geometry of varieties but not involving stacks. This makes the presentation simpler and illustrates the algorithm, and it will be easy for the exposition to add a stack-theoretic layer separately in the next section. To get used to the \'etale topology, already in this section we do not assume that the log structure is Zariski. In the end of this section we will construct a potential algorithm and detect the only place where it fails without using stacks.

\subsection{The framework}
We start with describing the manifolds and the admissible blowings up of the logarithmic principalization. Again the reader is referred to Chapter \cite{lognotes} in this volume for an introduction and general notation.

\subsubsection{Log schemes}
The basic geometric category we work with is the category of fs log schemes of finite type over a field $k$ of characteristic zero, in particular, all fiber products are saturated.

\subsubsection{Log smooth functoriality}
We will tacitly check (or at least mention) that all basic ingredients of our method are compatible with log smooth morphisms. This will guarantee that the desingularization and principalization we will construct are functorial with respect to all log smooth morphisms.

\subsubsection{Manifolds}
By a {\em log manifold} we mean a log smooth log variety over $k$. Recall that these are the same as the classical toroidal varieties and \'etale-locally they are of the form $X=\Spec(k[P][t_1\..t_n])$ with the log structure given by a toric monoid $P$. By a {\em submanifold} we mean any strict closed immersion $V\into X$ with $V$ a log manifold. Basic properties of logarithmic smoothness imply that these notions are log smooth functorial:

\begin{lem}
Let $Y\into X$ be a strict closed immersion of log varieties, $f\:X'\to X$ a log smooth morphism and $Y'=Y\times_XX'$. If $X$ is a log manifold and $Y$ is a log submanifold, then $X'$ is a log manifold and $Y'$ is log submanifold. If $f$ is surjective, then the converse is also true.
\end{lem}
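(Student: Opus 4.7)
The plan is to reduce both directions of the lemma to two standard stability properties of log smoothness in the fs category: (a) preservation under composition and under fs base change, and (b) descent along surjective log smooth morphisms. Everything else is bookkeeping with saturated fiber products and the fact that strict closed immersions are preserved under fs base change.

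For the forward direction, I would first argue that $X'$ is a log manifold by composition: $X$ is log smooth over $\Spec k$ by assumption, and $f\colon X'\to X$ is log smooth, so $X'\to\Spec k$ is log smooth. To handle $Y'$, I would form the fs fiber square with $Y\hookrightarrow X$ and $f\colon X'\to X$ and observe that $Y'\to Y$ is the fs base change of $f$, hence log smooth; composing with the log smooth structural morphism $Y\to\Spec k$ shows that $Y'$ is a log manifold. Strictness of the closed immersion $Y'\hookrightarrow X'$ follows because $Y\hookrightarrow X$ is strict and both strictness and closed immersion are preserved under fs base change. Thus $Y'\hookrightarrow X'$ is a log submanifold.

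For the converse, assume $f$ is log smooth and surjective. The key input is that log smoothness descends along surjective log smooth morphisms of fs log schemes (see Chapter \cite{lognotes}). Applied to the composition $X'\to X\to\Spec k$ with $f$ surjective, this gives that $X\to\Spec k$ is log smooth, so $X$ is a log manifold. For $Y$, I would note that $Y'\to Y$ is again log smooth as an fs base change of $f$, and is surjective because surjectivity of morphisms of fs log schemes is preserved by fs base change of a surjection along any morphism. Applying descent once more to $Y'\to Y$ together with the hypothesis that $Y'$ is log smooth over $k$ yields that $Y$ is log smooth over $k$. Since $Y\hookrightarrow X$ is a strict closed immersion by assumption, $Y$ is a log submanifold of $X$.

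The step I expect to be the main obstacle is the descent statement used in the converse: unlike preservation under base change, descent requires the target to be recovered from an fs cover, and one has to be careful that the notions of ``log smooth'' and ``surjective'' are taken in the fs sense. Modulo this standard fact from log geometry, the rest of the argument is a routine diagram chase with saturated fiber products and compositions, so no new idea is needed beyond invoking the properties of log smoothness and strictness.
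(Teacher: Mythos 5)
Your proposal is essentially the paper's own argument: the lemma is stated there without proof as a direct consequence of the basic stability properties of log smoothness (composition, fs base change, and descent along surjective log smooth morphisms), which is precisely what you spell out, with the descent statement being the real content of the converse (the log analogue of the classical fact that $g$ is smooth when $g\circ f$ is smooth and $f$ is smooth and surjective). One simplification: you do not need the delicate general claim that surjectivity is preserved by arbitrary fs base change, since $Y\hookrightarrow X$ is strict, so $Y'=Y\times_XX'$ is just the ordinary scheme-theoretic fiber product equipped with the pullback log structure and surjectivity of $Y'\to Y$ is immediate from surjectivity of $f$.
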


\subsubsection{Parameters}
Any log manifold $X$ is log regular, so the description from \cite[\S3.4.6]{lognotes} applies. Recall that the log strata are regular (and they are well-defined even if the log structure is not Zariski, see \cite[Exercise ~3.2.2(ii)]{lognotes}). Let $C$ be the log stratum containing a point $x$. We say that $t\in\calO_{X,x}$ is a {\em regular parameter} if its image in $\calO_{C,x}$ is a regular parameter. Similarly, $t_1\..t_n\in\calO_{X,x}$ form a {\em regular family of parameters} at $x$ if their images in $\calO_{C,x}$ do. By a full family of parameters $(t_1\..t_n,u)$ at $x$ we mean a regular family of parameters $t_1\..t_n\in\calO_{X,x}$ and a monomial chart $u\:P=\ocalM_\ox\to\cO_x$; the image of $p\in P$ is denoted $u^p$. The latter exists if and only if the log structure is Zariski at $x$, so in general a full family of parameters exists only \'etale locally. Any element of $u^P$ is viewed as a monomial parameter regardless of its arithmetic properties in $P$.

\begin{exer}
Let $X$ be a log manifold with a closed point $x$ and $l=k(x)$. Also, let $u\:P=\ocalM_\ox\to\calO_x$ be a monomial chart.

(i) Show that $t_1\..t_n\in\calO_{X,x}$ is a regular family of parameters if and only if $\hatcalO_{X,x}=l\llbracket P\rrbracket\llbracket t_1\..t_n\rrbracket$ if and only if the morphism $X\to\Spec(k[P][t_1\..t_n])$ is \'etale at $x$.

(ii) Show that $V\into X$ is a submanifold of codimension $r$ at $x$ if and only if there exist a regular family of parameters $t_1\..t_n\in\calO_{X,x}$ such that $V=V(t_1\..t_r)$ at $x$.
\end{exer}

\subsubsection{Admissible centers}
For an ideal $\cI$ let $\cI^{(d)}$ denote the integral closure $(\cI^d)^\nor$ of the $d$-th power. An {\em admissible center} or simply a {\em center} is an ideal which is locally of the form $\cJ=(t_1\..t_r,u^{p_1}\..u^{p_s})^{(d)}$, where $t_1\..t_r$ is a partial family of regular parameters, $u^{p_1}\..u^{p_s}$ are monomials and $d\ge 1$. Thus, as in the classical algorithm admissible centers are the ideals defined by parameters, but we have much more flexibility with the monomial generators.

\begin{rem}
(i) Admissible centers with $d=1$ are also called {\em submonomial ideals} because they correspond to monomial ideals on submanifolds. They can be locally described as $\calI+\calN$, where $\calI=(t_1\..t_r)$ is the ideal of a submanifold and $\calN=(u^{p_1}\..u^{p_r})$ is a monomial ideal.

(ii) The monomial ideal $\calN$ in the above presentation is unique, while the choice of $\calI$ and the corresponding submanifold $V=V(\calI)$ is not. For example, $\cJ=(t,u^p)$ can be also presented as $(t+fu^p,u^p)$ for any element $f$.
\end{rem}

\subsubsection{The multiplicity}
Amy admissible center which possesses a presentation as above with the power $d$ is called a {\em $d$-center}. If $r\ge 1$, then the center is not monomial, $d$ is uniquely determined and we define the {\em multiplicity} of $\cJ$ to be equal to $d$. If $r=0$, then the center is monomial, it is necessarily a 1-center, but can also be a $d$-center for finitely many other powers, and we define its multiplicity to be infinite. Multiplicity provides a reasonable classification of centers, and it will be the primary invariant of the method.

\begin{rem}
We will later see that log principalization can blindly insist to treat a monomial center as a $d$-center with no relation between the center and $d$. This request fails on log manifolds, and to remedy the problem we will have to switch to stacks.
\end{rem}

\subsubsection{Admissible blowings up}\label{admiseqsec}
Let $X$ be a log manifold and let $D$ be the toroidal divisor, i.e. $X\setminus D$ is the triviality locus of the log structure. Recall that the log structure of $X$ is divisorial, that is, $\calM_X=\cM(\rmlog D)$. Let $\cI\subseteq\cO_X$ be an ideal. A center $\cJ$ is {\em $\cI$-admissible} if $\cI\subseteq\cJ$, and an {\em $\cI$-admissible blowing up} is the normalized blowing up $f\:X'=Bl^\nor_\cJ(X)\to X$ along an $\cI$-admissible center $\cJ$ with the log structure $\cM_{X'}=\cM(\rmlog D')$ induced by the divisor $D'=f^{-1}(D\cup V(\cJ))$. The transform of the ideal is defined  as usual by $\cI'=(\cI\cO_{X'})(\cJ\cO_{X'})^{-1}$. Note that, similarly to the boundary in the classical desingularization framework, the new log structure is induced by the old one and the exceptional divisor.

\begin{rem}\label{admrem}
(i) The normalized blowings up along $\cJ$ and $\cJ^{(d)}$ coincide by Corollary~\ref{intcor}. What differs are the notions of admissibility and transform. Furthermore, the transforms with respect to the blowings up along $\cJ^d$ and $\cJ^{(d)}$ coincide by Lemma~\ref{intlem}, but we obtain the right version of admissibility only working with $\cJ^{(d)}$. For example, principalization of $\cJ^{(d)}$ can be done by blowing up either $\cJ^d$ or $\cJ^{(d)}$, but unless $\cJ^d$ is integrally closed, $\cJ^{(d)}$ is not $\cJ^d$-admissible.

(ii) A more classical but equivalent approach is to work with marked ideals $(\cI,d)$ instead of ideals and only 1-centers $\cJ$. In such a case, $\cJ$ is defined to be $(\cI,d)$-admissible if $\cI\subseteq\cJ^{(d)}$ and the transform is defined by $\cI\cO_{X'}(\cJ\cO_{X'})^{-d}$. This is the language used in \cite{ATW-principalization} and \cite{ATW-relative}, but once the general weighted centers were introduced in \cite{ATW-weighted}, our definition seems more natural.
\end{rem}

Our choice of admissible centers is natural, but one has to do some computation to check its correctness:

\begin{lem}\label{admblowlem}
Let $X$ be a log manifold and $X'\to X$ an admissible blowing up. Then $X'$ is a log manifold too.
\end{lem}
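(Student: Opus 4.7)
The statement is \'etale-local on $X$, so after choosing a chart we may assume $X=\Spec k[P][t_1\..t_n]$ with $\cJ=(t_1\..t_r,u^{p_1}\..u^{p_s})^{(d)}$, and by Remark~\ref{admrem}(i) we may further assume $d=1$ (since the normalized blowing up only depends on the integral closure class).

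The strategy I would take is to artificially enlarge the log structure so that the mixed ``parameter plus monomial'' center becomes a purely monomial center. Let $\tilde X$ denote the same underlying scheme equipped with the log structure associated to the fs monoid $\tilde P=P\oplus\bbN^r$, with the $\bbN^r$-generators $e_1\..e_r$ sent to $t_1\..t_r$. Then $\tilde X\cong\Spec k[\tilde P]\times\bbA^{n-r}$ is a log manifold and $\cJ$ is a monomial ideal on $\tilde X$, so its normalized blowing up $\tilde f\:\tilde X'\to\tilde X$ is the toric modification of $\tilde X$ corresponding to the fan subdivision of $\tilde P^\vee$ cut out by $\cJ$. In particular $\tilde X'$ is an fs toric variety, hence a log manifold with its intrinsic toric log structure $\cM_{\tilde X'}$.

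The underlying schemes of $X'=Bl^\nor_\cJ(X)$ and $\tilde X'$ coincide, since normalized blowing up depends only on the scheme and the ideal. The log structure $\cM_{X'}=\cM(\rmlog D')$ with $D'=f^{-1}(D\cup V(\cJ))$ prescribed in the lemma sits inside $\cM_{\tilde X'}$ as the sub-log-structure associated to the sub-divisor of the toric boundary of $\tilde X'$ obtained by discarding the strict transforms of $V(t_1)\..V(t_r)$. It therefore suffices to verify that on every affine chart of $\tilde X'$ these strict transforms are cut out by a free $\bbN^\bullet$ direct summand of the local toric monoid, so that removing them leaves a smaller fs chart with respect to which $X'$ is still log smooth.

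This last combinatorial verification is the main obstacle, and I would carry it out chart-by-chart via computations inside $\tilde P^{\mathrm{gp}}=P^{\mathrm{gp}}\oplus\bbZ^r$. On the chart associated with a generator $t_i$ of $\cJ$, the local monoid is generated by $P$, $\{e_l\}_l$, $\{e_j-e_i\}_{j\ne i}$, and $\{p_k-e_i\}_k$; the strict transforms of $V(t_j)$ for $j\ne i$ correspond to the generators $e_j-e_i$, and projecting onto the $\bbZ^{r-1}$-subspace they span (in the basis $\{e_i,e_j-e_i\}_{j\ne i}$ of $\bbZ^r$) exhibits them as a free $\bbN^{r-1}$ direct summand with fs complement generated by $P$, $e_i$, and $\{p_k-e_i\}_k$. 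The analysis for the chart associated with a generator $u^{p_k}$ is entirely parallel: the strict transforms correspond to $e_j-p_k$ and split off as a free $\bbN^r$-summand whose complement, generated by $P$ and $\{p_{k'}-p_k\}_{k'\ne k}$, is again fs. In either case the chart factors as $\Spec k[P']\times\bbA^\bullet$ for an fs monoid $P'$, and removing the free factors from the log structure yields a log manifold, which completes the proof.
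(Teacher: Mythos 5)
Your proof is correct, and it is essentially the route that the paper itself sketches in the side remark immediately following Exercise~\ref{admblowex} rather than the direct computation of that exercise: you enlarge the log structure by the free summand $\oplus_{i}\bbN\log(t_i)$ so that the center becomes monomial, observe that the normalized blowing up is then a toric (equivalently, saturated log) modification and hence a log manifold for the enlarged structure, and finally check chart-by-chart that the strict transforms of the $V(t_i)$ span a free direct summand of each chart monoid, so that the prescribed divisorial log structure $\cM(\log D')$ is still fs and log smooth. The paper's designated proof instead writes down the charts of $Bl^\nor_\cJ(X)$ directly; note, though, that your splitting computations reproduce exactly the paper's chart monoids $P_s$ (saturation of $\langle P,q,p_k-q\rangle$ on a $t_i$-chart, of $\langle P,p_{k'}-p_j\rangle$ on a $u^{p_j}$-chart), so the combinatorial content is identical and the difference is only in packaging: your version makes the ``why it works'' transparent (log blowings up of monomial ideals preserve log smoothness, and only a free factor is discarded afterwards), while the paper's exercise gives the explicit charts that are reused later. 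Two points you assert rather than prove are at the same level of detail as the paper and are fine, but worth being aware of: the reduction to the model case uses that normalized blowings up are compatible with the étale chart morphism (this is why the reduction to $d=1$ via Remark~\ref{admrem}(i) and Corollary~\ref{intcor} is done first), and the identification of $\cM(\log D')$ on each chart with the log structure of the complementary fs monoid uses that $D'$ is precisely the toric boundary minus those strict transforms together with Kato's description of the log structure of a log regular scheme as the divisorial one.
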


The proof is straightforward -- a model case is dealt with by an essentially toric computation and the general case follows by an appropriate choice of a covering. With enough care this method applies to the case when $X$ is an arbitrary log regular log scheme, see \cite[Lemma~5.2.3]{ATW-destackification}, but we stick to the case of varieties for simplicity.

\begin{exer}\label{admblowex}
Complete details in the following sketch of a proof of Lemma~\ref{admblowlem}

 (i) The question reduces to the model case when $X=\bfA^n_P=\Spec(k[P][t_1\..t_n])$, the log structure is given by a toric monoid $P$ and the center is of the form $\cJ=(t_1\..t_m,u^{p_1}\..u^{p_r})$. Indeed, the blowing up along $\cJ^{(d)}$ is the same as along $\cJ$, so we can assume that $\cJ$ is a 1-center, and then \'etale locally we can just choose parameters $t_1\..t_n,u\:P\to\calO_X$ such that $\cJ$ is generated by vanishing of a few parameters. Consider the induced \'etale morphism $\phi$ to the model log scheme $\bfA^n_P$. Then \'etale descent and compatibility of blowings up and normalizations with the \'etale morphism $\phi$ imply that it suffices to study the blowing up of the source along the ideal generated by the same parameters.

(ii) In the model case the blowing up is described as follows:

(a) If $s=t_i$, then the $s$-chart is $$X_s=\Spec(k[P_s][t'_1\..t'_{i-1},t'_{i+1}\..t'_n]),$$ where $t'_j=t_j/t_i$ for $j\le m$, $t'_j=t_j$ for $m<j\le n$ and $P_s$ is the saturation of the submonoid of $P^\gp\oplus \bbZ q$ generated by $P$, $q$ and the elements $p_i-q$. The log structure is extended by $u^q=t_i$.

(b) If $s=u^{p_j}$, then the $s$-chart is $X_s=\Spec(k[P_s][t'_1\..t'_n])$, where $t'_j=t_j/s$ for $j\le m$, $t'_j=t_j$ for $m<j\le n$ and $P_s$ is the saturation of the submonoid of $P^\gp$ generated over $P$ by the elements $p_1-p_j\..p_r-p_j$.
\end{exer}

The lemma can also be proved by another standard approach, which is beneficial in some situations. We will not need this, so just outline the idea in a side remark.

\begin{rem}
We will describe $Bl_\cJ(X)^\nor$ in terms of a log blowing up. First, working locally one increases the log structure by the direct summand $\oplus_{i=1}^m\bbN\log(t_i)$. This produces a new log scheme $Y$, which is still a log manifold. Then the center $\cJ$ becomes monomial and one considers the saturated log blowing up $Y'=LogBl_\cJ(Y)^\sat$. The morphism $Y'\to Y$ is log smooth, hence $Y'$ is also a log manifold. In particular, it is normal and hence $Y'$ is also the normalized blowing up of $Y$. In particular, $\uX'=\uY'$ and the divisor defining the log structure of $X'$ is obtained from that of $Y'$ by removing the strict transform of divisors corresponding to $V(t_i)$ -- on the $s$-chart one has that $D_{X'}=D_{Y'}\setminus\cup_{i=1}^mV(t_i/s)$. A simple computation shows that the obtained log scheme $X'$ is indeed log smooth because locally the monoids split as $\ocM_{Y',y}=\ocM_{X',y}\oplus\bbN^r$, where the free summand is generated by the elements $\log(t_i/s)$ such that $y\in V(t_i/s)$.
\end{rem}

\subsubsection{Functoriality of admissible blowings up}
Let $X$ be a log manifold. A log smooth morphism $f\:X'\to X$ induces a smooth morphism between the log strata, and hence a family of regular parameters at $x\in X$ pulls back to a partial family of regular parameters at any point $x'\in f^{-1}(x)$.  This implies that the pullback of an admissible center is an admissible center. Log smooth morphisms do not have to be flat, so their compatibility with blowings up is not automatic. However, using charts of log smooth morphisms and the above explicit description of admissible blowings up one easily obtains the following

\begin{lem}\label{functblowup}
If $X'\to X$ is a log smooth morphism of log manifolds and $\cJ$ is an admissible center on $X$, then $\cJ'=\cJ\cO_{X'}$ is an admissible on $X'$ and $Bl_{\cJ'}(X')=Bl_\cJ(X)\times_XX'$ (with the fiber product taken in the saturated category).
\end{lem}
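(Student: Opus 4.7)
The plan is to prove both assertions by reducing to an explicit toric model via \'etale descent, in the spirit of Exercise~\ref{admblowex}, and then verifying the statement chart by chart.

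\emph{Admissibility of $\cJ'$.} A log smooth morphism $f\:X'\to X$ induces a smooth morphism between the log strata through $x'$ and $x=f(x')$, so a partial family of regular parameters $t_1\..t_r$ at $x$ lifts to a partial family of regular parameters at $x'$. A chart of $f$ provides a morphism of toric monoids $P=\ocM_{\bar x}\to\ocM_{\bar{x'}}$ carrying each $p_i$ to some $p'_i$, and hence every monomial $u^{p_i}$ pulls back to a monomial up to a unit. Thus the submonomial part of $\cJ$ pulls back to a submonomial ideal on $X'$ of the same shape. It remains to verify that integral closure commutes with pullback, i.e.\ $(\cJ^{(d)})\cO_{X'}=(\cJ\cO_{X'})^{(d)}$; this follows once one knows that the (saturated) fiber product $Bl_\cJ(X)\times_X X'$ is normal, which in turn follows from the chart computation below combined with Lemma~\ref{admblowlem}.

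\emph{Compatibility with base change.} After \'etale descent, and noting that the normalized blow-ups of $\cJ$ and $\cJ^{(d)}$ coincide, the question reduces to the model case $X=\bfA^n_P$ with $\cJ=(t_1\..t_m,u^{p_1}\..u^{p_r})$ a $1$-center as in Exercise~\ref{admblowex}(i). Using a chart, I would factor $f$ locally as a strict smooth morphism $X'\to\bfA^{n'}_Q\times_{\Spec k[P]}X$ followed by the pullback of the toric morphism $\Spec k[Q]\to\Spec k[P]$. Strict smooth morphisms are flat and commute with normalization and with ordinary blowings up, so the statement reduces to base change along $P\to Q$. On each $s$-chart from Exercise~\ref{admblowex}(ii) this becomes the assertion that the saturated pushout of $P_s$ and $Q$ over $P$ equals $Q_s$, an elementary toric identity since both sides are described by the same generators (the images of $P$, of $Q$, and of the element $q$ or $p_i-p_j$) and the same saturation operation inside the group $Q^\gp\oplus\bbZ q$ (respectively $Q^\gp$).

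\emph{Main obstacle.} The substantive point is that log smooth morphisms are not flat, which is precisely why the fiber product must be formed in the saturated fs category rather than in ordinary schemes. The entire non-flatness is concentrated in the toric base change $P\to Q$, and is absorbed by the saturation operation on monoids; once the saturated-pushout identity above is recorded, both the integral closure assertion of the first claim and the base change identity of the second reduce to routine toric bookkeeping, carried out chart by chart just as in the proof of Lemma~\ref{admblowlem}.
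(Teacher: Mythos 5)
Your overall route is the one the paper intends (it only sketches it): admissibility of the pullback via the smooth map of log strata, and the base-change identity via charts of log smooth morphisms plus the explicit chart description of Exercise~\ref{admblowex}. For the second assertion your reduction is fine: the strict smooth factor is flat, so it commutes with blowings up and normalization, and the toric factor is handled by the saturated-pushout identity on the $s$-charts, which holds because $\langle P_s,Q\rangle$ and $\langle Q,q,p_i-q\rangle$ (resp. $\langle Q,p_i-p_j\rangle$) have the same saturation in $Q^\gp\oplus\bbZ q$ (resp. $Q^\gp$); note also that the identity is insensitive to replacing $\cJ$ by the underlying $1$-center, by Corollary~\ref{intcor}.

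The gap is in the first assertion. You claim that $(\cJ_1^{(d)})\cO_{X'}=(\cJ_1\cO_{X'})^{(d)}$ ``follows once one knows that the saturated fiber product is normal.'' Normality of $Bl_\cJ(X)\times_XX'$ gives no such thing: the inclusion $\cJ_1^{(d)}\cO_{X'}\subseteq(\cJ_1\cO_{X'})^{(d)}$ is formal, but the reverse inclusion would amount to a base-change statement for pushforwards along a non-flat morphism, and in fact the equality can genuinely fail for log smooth morphisms of log manifolds. Take $X=\Spec(k[x,y])$ with log structure $\bbN^2$, let $Q\subseteq\tfrac12\bbZ^2$ be generated by $(1,0),(0,1),(\tfrac12,\tfrac12)$, and let $X'=\Spec(k[Q])\cong\Spec(k[x,y,z]/(xy-z^2))$ with the Kummer (hence log \'etale) morphism $X'\to X$. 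For the admissible center $\cJ=(x^3,y^3)^{(1)}=(x^3,x^2y,xy^2,y^3)$ the pullback $\cJ\cO_{X'}$ is not integrally closed: $z^3$ satisfies $(z^3)^2=x^3y^3\in(\cJ\cO_{X'})^2$ but $z^3\notin\cJ\cO_{X'}$; since at the torus-fixed point of $X'$ the log stratum is a point, every admissible center there is purely monomial and hence integrally closed, so $\cJ\cO_{X'}$ is not literally of the form $(t_1\..t_r,u^{p_1}\..u^{p_s})^{(d)}$ there. So the first claim cannot be proved the way you propose; the correct handling is either to verify the shape of the pullback directly from the explicit formula $(\cI+\cN)^{(d)}=\sum_{i}\cI^i(\cN^{d-i})^\nor$ in the cases where it does commute, or, as the paper itself does in the relative setting (Lemma~\ref{basechange}, where the pulled-back center is defined as $(\cJ\cO_{X'_\ket})^\nor$), to pass to the integral closure of the pullback; this replacement changes neither $\cI$-admissibility nor the normalized blowing up, so the second assertion and all later uses of the lemma are unaffected.
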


\subsubsection{$\cI$-admissible sequences}
Given an ideal $\cI\subseteq\cO_X$, by an {\em $\cI$-admissible sequence of blowings up} or simply {\em an $\cI$-admissible sequence} we mean a sequence of admissible blowings up $f_\bullet\:X_n\dashto X_0=X$ with centers $\cJ_i\subseteq\cO_{X_i}$ such that each $\cJ_i$ is $\cI_i$-admissible, where $\cI_i$ is the transform of $\cI_{i-1}$ and $\cI_0=\cI$.

We will also need the following special case. Let $d\ge 1$ be a natural number. If each $\cJ_i$ is a $d$-center, then $f_\bullet$ is called an {\em $\cI$-admissible $d$-sequence}. By an {\em $\calI$-admissible $\infty$-sequence} we just mean any $\cI$-admissible sequence whose centers are monomial without any further restrictions. Lemma~\ref{functblowup} easily implies functoriality of $\cI$-admissible sequences:

\begin{lem}\label{functseq}
If $X'\to X$ is a log smooth morphism of log manifolds, $\cI$ is an ideal on $X$ with $\cI'=\cI\cO_{X'}$, and $f_\bullet\:X_n\dashto X_0=X$ is an $\cI$-admissible sequence (resp. $d$-sequence) with centers $\cJ_i\subseteq\cO_{X_i}$, then the pullback $f_\bullet\times_XX'\:X'_n\dashto X'_0=X'$ is an $\cI'$-admissible sequence (resp. $d$-sequence) with centers $\cJ_i\cO_{X'_i}$.
\end{lem}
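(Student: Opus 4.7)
The natural approach is induction on the length $n$ of the sequence, with Lemma~\ref{functblowup} doing the work at each step. The case $n=0$ is trivial, so assume the statement holds for the first $i$ stages. Then we have a log smooth morphism $g_i\:X'_i\to X_i$ (log smoothness is preserved under saturated base change) together with an equality $\cI'_i=\cI_i\cO_{X'_i}$ of transforms, and we must upgrade this to stage $i+1$.

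First, because $\cI_i\subseteq\cJ_i$, pulling back to $X'_i$ gives $\cI'_i=\cI_i\cO_{X'_i}\subseteq\cJ_i\cO_{X'_i}$, so admissibility of the center is automatic once we know that $\cJ'_i:=\cJ_i\cO_{X'_i}$ is an admissible center on $X'_i$. But this is exactly (the first half of) Lemma~\ref{functblowup} applied to the log smooth morphism $g_i$. The same lemma identifies $X'_{i+1}=Bl_{\cJ'_i}(X'_i)$ with the saturated fiber product $X_{i+1}\times_{X_i}X'_i$, so $g_{i+1}\:X'_{i+1}\to X_{i+1}$ is again log smooth as a saturated base change of a log smooth morphism, and $X'_{i+1}$ is again a log manifold by Lemma~\ref{admblowlem}. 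This sets up the inductive data for the next step.

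Next I would verify that the transform commutes with pullback, i.e.\ $\cI'_{i+1}=\cI_{i+1}\cO_{X'_{i+1}}$. Writing $h\:X'_{i+1}\to X_{i+1}$ for the induced map, this reduces to the identity
\[
\bigl(\cI_i\cO_{X_{i+1}}\cdot(\cJ_i\cO_{X_{i+1}})^{-1}\bigr)\cO_{X'_{i+1}}=\bigl(\cI_i\cO_{X'_i}\bigr)\cO_{X'_{i+1}}\cdot\bigl((\cJ_i\cO_{X'_i})\cO_{X'_{i+1}}\bigr)^{-1},
\]
which holds because pullback of ideals is a monoid homomorphism and the exceptional ideal $\cJ_i\cO_{X_{i+1}}$ is invertible, so its inverse pulls back to the inverse of its pullback.

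Finally, for the refined statement about $d$-sequences, I would check locally: if $\cJ_i=(t_1\..t_r,u^{p_1}\..u^{p_s})^{(d)}$ in a neighborhood of a point of $X_i$, then by the remark preceding Lemma~\ref{functblowup} the $t_j$ pull back to part of a regular family of parameters on $X'_i$, while the $u^{p_k}$ pull back to monomials under the chart map of the log smooth morphism; integral closure commutes with flat (hence with arbitrary) pullback on normal schemes well enough that $\cJ'_i$ has the same shape with the same exponent $d$, so its multiplicity is $d$. The $\infty$-case is even easier since it just amounts to the pullback of a monomial ideal being monomial. I expect the main point requiring care to be this last verification that the local presentation of an admissible $d$-center is preserved under log smooth pullback, but this is essentially the content already packaged into Lemma~\ref{functblowup}, so the induction goes through cleanly.
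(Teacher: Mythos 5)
Your proof is correct and follows exactly the route the paper intends: the paper disposes of this lemma in one line by saying Lemma~\ref{functblowup} easily implies it, and your induction --- admissibility of $\cJ_i\cO_{X'_i}$ and the identification $X'_{i+1}=Bl_{\cJ_i\cO_{X'_i}}(X'_i)=X_{i+1}\times_{X_i}X'_i$ from Lemma~\ref{functblowup}, compatibility of transforms via invertibility of the exceptional ideal, and preservation of the local shape of the center because regular parameters and monomials pull back to regular parameters and monomials --- is just that argument written out. The only point to tighten is the aside that integral closure commutes with ``flat (hence with arbitrary)'' pullback, which is not a valid general principle (and log smooth morphisms need not be flat); the preservation of the exponent $d$ is better justified either by the explicit local/combinatorial description of $(t_1\..t_r,u^{p_1}\..u^{p_s})^{(d)}$ under such pullbacks, or by observing that the multiplicity of a non-monomial center equals its log order at points of $V(\cJ)$, which is log smooth functorial by Corollary~\ref{logordcor}.
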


\subsubsection{Log principalization}
A {\em principalization} of $\cI$ is an $\cI$-admissible sequence with $\cI_n=\cO_{X_n}$. The main theorem of logarithmic desingularization theory is that there exists log principalization of varieties, which is functorial with respect to all log smooth morphisms and satisfies the re-embedding principle. We will formulate it later, when the correct stack-theoretic framework will be established.

\subsubsection{Uniqueness of an ambient log manifold}
As in the classical case, log varieties can be embedded into log manifolds and the minimal embedding is unique up to \'etale covers and is essentially controlled by the tangent space. This is formulated and proved in detail in \cite[\S7.1]{ATW-principalization}, and here is a short summary and an outline of the arguments.

\begin{lem}\label{embedlem}
Let $Z$ be an equidimensional log variety, whose log structure is Zariski at a point $z\in Z$. Let $P=\ocM_z$, $r=\rk(P)$ and $n=\dim(m_{C_z,z}/m^2_{C_z,z})$ the dimension of the cotangent space of $C_z$ at $z$, where $C_z=V(P^+)$ is the log stratum at $z$.

(i) There exists a strict closed immersion of a neighborhood $U$ of $z$ into a log manifold $X$.

(ii) Locally at $z$ any strict closed immersion $Z\into X$ into a manifold factors through a closed submanifold of dimension $n+r$. In particular, $n+r$ is the minimal embedding dimension locally at $z$.

(iii) Any two embeddings $Z\into X_1$, $Z\into X_2$ into manifolds of dimension $n+r$ locally at $z$ are dominated by an embedding into a log manifold $X$ \'etale over $X_1$ and $X_2$.

(iv) Given an embedding $Z\into X$ any log smooth $Z$-scheme $Z'$ \'etale locally can be embedded into a log smooth $X$-scheme $X'$ so that $Z'=X'\times_XZ$.
\end{lem}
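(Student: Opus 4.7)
The plan is to prove the four parts in order: (i) and (ii) are direct local constructions via monomial charts; (iii) reduces to (ii) applied to a saturated fiber product; and (iv) uses Kato's chart theorem for log smooth morphisms together with infinitesimal lifting for \'etale morphisms. For (i), the log structure being Zariski at $z$ gives a local monomial chart $P = \ocM_z \to \cO_{Z,z}$. Lift a minimal system of generators of $m_{C_z,z}$ to elements $t_1, \ldots, t_n \in \cO_{Z,z}$, then adjoin $t_{n+1}, \ldots, t_N$ so that $t_1, \ldots, t_N$ together with $u^P$ generate $\cO_{Z,z}$ as a $k$-algebra, possible since $Z$ is of finite type. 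The resulting surjection $k[P][t_1, \ldots, t_N] \twoheadrightarrow \cO_{U,z}$ on a Zariski neighborhood $U$ of $z$ defines a strict closed immersion $U \into \bfA^N_P$, strictness holding because both sides carry the log structure induced by $P$.

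For (ii), given a strict immersion $Z \into X$ into a log manifold $X$, strictness forces $\ocM_{X,z} = P$; the log stratum $C_{X,z}$ is regular of dimension $\dim X - r$ and scheme-theoretically $C_z = Z \times_X C_{X,z}$ near $z$. Pick $s_1, \ldots, s_{\dim X - r - n}$ in $\cO_{C_{X,z},z}$ vanishing on $C_z$ and forming part of a regular system of parameters, and lift them to regular parameters of $X$ via the presentation $\widehat{\cO}_{X,z} \cong k(z)\llbracket P\rrbracket\llbracket t_1, \ldots, t_{\dim X - r}\rrbracket$ afforded by log smoothness. Then $X' = V(s_1, \ldots, s_{\dim X - r - n})$ is a submanifold of dimension $n + r$ containing $Z$ locally at $z$. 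For minimality, any strict immersion of $Z$ into a log manifold $X''$ gives $\dim X'' = r + \dim C_{X'',z} \ge r + \dim_{k(z)} m_{C_z,z}/m_{C_z,z}^2 = n + r$.

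For (iii), form the saturated fiber product $Y = X_1 \times^{\sat}_{\bfA^r_P} X_2$ using the chart $P = \ocM_z$ on both sides. The pushout of $P \leftarrow P \to P$ in fs monoids is $P$ itself, so $Y$ is a log manifold of dimension $2n + r$ with $\ocM_{Y,(z,z)} = P$, and the diagonal $Z \into Y$ is a strict closed immersion. Applying (ii) locally at $(z,z)$ yields a submanifold $X \subset Y$ of dimension $n + r$ containing $Z$. Each projection $\pi_i \colon X \to X_i$ is a strict morphism of log manifolds of equal dimension extending the identity on $Z$; a cotangent space computation at $z$ -- using that the conormal $I_Z/I_Z^2$ at $z$ has the same dimension $r$ in both $X_i$ and $X$ by minimality -- shows that $\pi_i^*$ is an isomorphism on $\Omega^1_{\log}$, hence $\pi_i$ is log \'etale, and being strict it is classically \'etale at $z$.

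For (iv), let $z' \in Z'$ lie over $z \in Z$. By Kato's chart theorem, \'etale locally around $z'$ the morphism $Z' \to Z$ factors as a strict \'etale map composed with a standard toric morphism $Z \times^{\sat}_{\Spec k[Q]} \Spec k[Q'][t_1, \ldots, t_m] \to Z$, where $Q = \ocM_z \into Q'$ is an injection of fs monoids with torsion cokernel of order invertible in $k$. Strictness of $Z \into X$ extends $Q \to \ocM_Z$ to $Q \to \ocM_X$, so the toric factor lifts to $\widetilde{X} = X \times^{\sat}_{\Spec k[Q]} \Spec k[Q'][t_1, \ldots, t_m]$, which is log smooth over $X$ and whose base change to $Z$ recovers the toric factor of $Z' \to Z$. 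The strict \'etale factor then lifts through the strict closed immersion $Z \times_X \widetilde{X} \into \widetilde{X}$ by the infinitesimal lifting criterion for \'etale morphisms, completing the construction. The delicate step overall is the cotangent computation in (iii): the minimality hypothesis on $X_1, X_2$ must be converted carefully into an isomorphism (rather than merely a surjection) on logarithmic cotangent spaces.
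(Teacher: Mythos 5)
Your overall strategy coincides with the paper's: (i) and (ii) by charts and parameter/cotangent counts, (iii) by embedding $Z$ into a product of the two ambient manifolds and cutting down to a minimal embedding via (ii), and (iv) via Kato's structure theorem for log smooth morphisms together with lifting an \'etale morphism along a closed immersion. Your one real deviation in (iii) -- taking the fibre product over $\Spec(k[P])$ instead of the plain product $X_1\times X_2$ -- is in fact a necessary sharpening of the paper's terse hint, since $Z\into X_1\times X_2$ is not strict (the characteristic monoid at the image of $z$ is $P\oplus P$), so (ii) could not be applied to it as literally stated.

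Two steps, however, must be completed for your write-up to go through. In (ii), an arbitrary lift of the $s_i$ from $\cO_{C_{X},z}$ to $\cO_{X,z}$ need not cut out a submanifold containing $Z$: the lifts have to be chosen inside $I_{Z,z}$, which is possible precisely because $C_z=Z\times_XC_{X,z}$ identifies the ideal of $C_z$ in $C_{X}$ with the image of $I_Z$ -- say this. In (iii), the map $Z\into X_1\times^{\sat}_{\Spec(k[P])}X_2$ only exists if the two charts restrict to the \emph{same} chart on $Z$; there is no canonical chart ``$P=\ocM_z$'', so you must first fix a chart $P\to\cM_Z$ neat at $z$ and lift it to charts of $\cM_{X_1}$ and $\cM_{X_2}$, which works because $\cM_{X_i,z}\to\cM_{Z,z}$ is surjective along a strict closed immersion and $P^\gp$ is free. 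Finally, in your \'etaleness argument the conormal of $Z$ has rank $(n+r)-\dim_zZ$, not $r$, and the cleanest completion avoids $\Omega^1_{\log}$ altogether: by minimality the cotangent spaces at $z$ of the log strata of $X$, $X_1$, $X_2$ all have dimension $n$ and surject onto that of $C_z$, so a regular family of parameters of $X_i$ at $z$ pulls back to one of $X$; together with the lifted chart this exhibits both as \'etale over $\Spec(k[P][t_1\..t_n])$ compatibly, whence $\pi_i$ is \'etale at $z$. With these insertions the proof is correct and essentially the paper's argument.
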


\begin{exer}
Prove the above lemma along the following lines:

(i) If $u\:P\to \cM_z$ is a monoidal chart and $t_1\..t_n\in\cO_{Z,z}$ are elements whose images span the cotangent space to $C_z$ at $z$. Then the induced morphism $Z\to X_0=\Spec(k[P][t_1\..t_n])$ is unramified at $z$ and hence locally at $z$ factors through a closed immersion into an \'etale $X_0$-scheme $X$.

(ii) The relative cotangent space to $Z\to\Spec(k[P])$ at $z$ is of dimension $n$, hence the minimal embedding dimension is $n+r$. If the dimension of $X$ is larger than $n+r$, then the map of the cotangent spaces at $z$ has a non-trivial kernel and therefore a non-zero element of this kernel lifts to an element $t\in\cI_{Z,z}=\Ker(\cO_{X,z}\to\cO_{Z,z})$. This $t$ is a parameter, hence $V(t)$ is a submanifold containing $Z$.

(iii) Apply (ii) to the embedding $Z\into X_1\times X_2$ to obtain a minimal embedding $Z\into X$ which is \'etale over $X_1$ and $X_2$.

(iv) \'Etale locally we can factor $Z'\to Z$ into a composition of an \'etale morphism $Z'\to Z''=Z_P[Q]\times\bfA^m$ and the projection onto $Z$. The projection simply lifts to $X''=X_P[Q]\times\bfA^m\to X$ and then $Z'\to Z''$ is the pullback of an appropriate \'etale morphism $X'\to X''$.
\end{exer}

\subsubsection{Reduction to log principalization}\label{reductiontologprinc}
Lemma~\ref{embedlem} reduces desingularization of log varieties to principalization on log manifolds. This is similar to the classical argument and is worked out in detail in \cite[\S7.2]{ATW-principalization}, and we outline the argument here.

Assume that $Z$ is a locally eqidimensional log variety which is generically log smooth (that is, $Z$ is generically reduced and the log structure is generically trivial). Since \'etale locally any fs log structure is Zariski, by \ref{embedlem}(i) there exists an \'etale cover $Z'\to Z$ such that $Z'$ admits a strict closed immersion into a log manifold $X$. Multiplying some components of $X$ by $\bfA^m_k$ we can assume that $Z'$ is of a constant codimension in $X$. Then the principalization $X_n\dashto X_0=X$ of $\cI_{Z'}$ blows up all generic points of the strict transforms of $Z'$ simultaneously, say at the blowing up $X_{m+1}\to X_m$. At this stage the strict transform of $Z'$ is a component $Z'_m$ of the center $V(\cJ_m)$, and it is even a log manifold because the monomial part of the center is trivial at its generic points. Thus, $Z'_m\to Z'$ is a projective birational morphism of log varieties with a log smooth source.

This resolution is independent of the embedding and its codimension by \ref{embedlem}(ii) and (iii) and the re-imbedding principle satisfied by log principalization. Moreover, such a resolution is compatible with log smooth morphisms and hence the resolution of $Z''=Z'\times_ZZ'$ is compatible with both projections $Z''\to Z'$, and by \'etale descent the resolution $Z'_\res\to Z'$ descends to a projective birational morphism $Z_\res\to Z$. Since the log manifold $Z'_\res$ is an \'etale over of $Z_\res$, the latter is also a log manifold, and hence $Z_\res\to Z$ is a resolution. Finally, compatibility of the obtained resolution with arbitrary log smooth morphisms is deduced from log smooth functoriality of the log principalization by use of \ref{embedlem}(iv).

\subsection{Log derivations and log order}\label{logdersec}

\subsubsection{The sheaf of log derivations}
The last choice of the framework is as follows: we will use log $k$-derivations on log manifolds $X$ (see \cite[\S4.2.3]{lognotes}), so we will use the notation $\cD_X=\Der_{k}(\calO_X,\calO_X)$. Note that the sheaf of log $k$-derivations is locally free of rank $d=\dim(X)$, and it can be described explicitly in terms of parameters. Naturally, this description splits into a classical part -- derivations corresponding to regular parameters, and a logarithmic part -- log derivations corresponding to monomials.

\begin{exer}
Assume that $t_1\..t_n,u:P\to\calO_{X,x}$ is a family of parameters at $x$.

(i) Show that for $1\le i\le n$ there exists a unique log derivation $(\partial_i,\delta_i=0)\in\cD_{X,x}$ such that $\partial_i(t_i)=1$ and $\partial_i(t_j)=0$ for $j\neq i$. We will denote this derivation $\partial_{t_i}$ or just $\partial_i$. Show that for any additive homomorphism $\phi\:P^\gp\to\calO_{X,x}$ there exists a unique log derivation $(\partial_\phi,\phi)$ such that $\partial_\phi(t_i)=0$ for any $i$. In particular, $\partial(u^p)=\phi(p)u^p$ for monomials $p\in P$. This log derivation will be denoted $\partial_\phi$.

(ii) Furthermore, show that $\cD_{X,x}$ is a free module, and if $\phi_1\..\phi_r$ form a basis of $\Hom(P^\gp,k)$ (or even of $\Hom(P^\gp,\bbZ)$), then $\partial_1\..\partial_n,\partial_{\phi_1}\..\partial_{\phi_r}$ is a basis of $\cD_{X,x}$.
\end{exer}

\subsubsection{Logarithmic differential operators}
The algebra of log differential operators on $\cO_X$ is the $\cO_X$-algebra $\cD^\infty_X$ generated by the $\cO_X$-module $\cD_X$. It is naturally filtered by submodules $\cD^{\le i}_X$ of operators of order at most $i$. Locally its elements are polynomials in $\partial_1\..\partial_n,\partial_{\phi_1}\..\partial_{\phi_r}$ over $\cO_X$ of degree at most $i$. For an ideal $\cI$ we will often consider its $i$-th derivation ideal $\cD_X^{\le i}(\cI)$. Clearly, $\cD_X^{\le i}(\cD_X^{\le j}(\cI))=\cD_X^{\le i+j}(\cI)$. The compatibility of these notions with log smooth morphisms is as follows:

\begin{lem}\label{derfunct}
Let $f\:X'\to X$ be a log smooth morphism of log manifolds and $\cD_{X'/X}=\Der_{\cO_X}(\cO_{X'},\cO_{X'})$. Then,

(i) The sequence $$0\to\cD_{X'/X}\to\cD_{X'}\to f^*(\cD_X)\to 0$$ is an exact sequence of free $\calO_{X'}$-modules, and hence it splits locally.

(ii) If $\cI$ is an ideal on $X$ and $\cI'=\cI\cO_{X'}$, then $\cD_{X'}^{\le i}(\cI')=\cD_X^{\le i}(\cI)\cO_{X'}$ for any $i\le\infty$.
\end{lem}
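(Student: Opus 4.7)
The plan is to treat (i) as a dualization of the log cotangent sequence, and then use the local splitting provided by (i) to prove (ii) by induction on $i$ with a Leibniz-rule bookkeeping.

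For (i), I would start from the standard fact for a log smooth morphism $f\colon X'\to X$ of log manifolds that the log cotangent sequence
\[
0\to f^*\Omega^1_X\to\Omega^1_{X'}\to\Omega^1_{X'/X}\to 0
\]
is a short exact sequence of locally free $\cO_{X'}$-modules (this is part of the equivalent characterizations of log smoothness, e.g.\ via Kato's chart criterion). Dualizing, and using that $\Hom_{\cO_{X'}}(\Omega^1_{X'/X},\cO_{X'})=\cD_{X'/X}$ and similarly for the other terms, yields the asserted exact sequence of free $\cO_{X'}$-modules; a splitting exists locally since all terms are locally free. Concretely, one may verify this in a chart by choosing a family of parameters $t_1\..t_n,u\:P\to\cO_X$ at $x\in X$ and extending it to $t_1\..t_n,t_{n+1}\..t_{n+m},u'\:P'\to\cO_{X'}$ at $x'\in f^{-1}(x)$ using the chart description of log smooth morphisms; the log derivations $\partial_{t_j},\partial_\phi$ coming from the chart of $X$ pull back and together with the relative ones $\partial_{t_{n+j}},\partial_{\phi'}$ form a basis of $\cD_{X'}$, which makes the splitting explicit.

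For (ii), the inclusion $\cD_X^{\le i}(\cI)\cO_{X'}\subseteq\cD_{X'}^{\le i}(\cI')$ is immediate: via the splitting in (i), every section $\partial$ of $f^*\cD_X$ lifts to a section $\tilde\partial$ of $\cD_{X'}$ of the same order, and if $a\in\cI$ then $\tilde\partial(a)$ agrees with the pullback of $\partial(a)$; so generators of the left side lie in the right side, and the right side is an $\cO_{X'}$-module. The non-trivial direction is the reverse inclusion, which I would prove by induction on $i$. The case $i=0$ is tautological, since $\cD_X^{\le 0}(\cI)=\cI$ and $\cD_{X'}^{\le 0}(\cI')=\cI\cO_{X'}$.

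For the inductive step the assertion is local, so I fix compatible parameters as above so that $\cD_{X'}$ decomposes as $f^*\cD_X\oplus\cD_{X'/X}$ with basis elements that pairwise commute (the $\partial_{t_j}$'s commute with each other, with $\partial_{\phi}$'s, and with the corresponding relative generators, because all of them arise from coordinate/monoidal derivations). This allows me to write any section of $\cD_{X'}^{\le i}$ as an $\cO_{X'}$-linear combination of products $D_X\cdot D_{X'/X}$ with $D_X\in f^*\cD_X^{\le k}$ and $D_{X'/X}\in\cD_{X'/X}^{\le i-k}$. Now I apply such an operator to an element $c=\sum f_j a_j$ of $\cI'=\cI\cO_{X'}$ with $f_j\in\cO_{X'}$, $a_j\in\cI$. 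Since $\cD_{X'/X}$ is $f^{-1}\cO_X$-linear, Leibniz kills every term in which a relative derivation touches $a_j$, leaving $D_{X'/X}(c)=\sum D_{X'/X}(f_j)\cdot a_j\in\cI\cO_{X'}$. Then applying $D_X$, which lifts a differential operator from $X$ of order $\le k$, and expanding by Leibniz on each $D_{X'/X}(f_j)\cdot a_j$, produces a sum of terms of the form $\partial_S(D_{X'/X}(f_j))\cdot \partial_{S^c}(a_j)$, whose second factor lies in $\cD_X^{\le k}(\cI)\subseteq\cD_X^{\le i}(\cI)$ and whose first factor is a section of $\cO_{X'}$. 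Thus $D(c)\in\cD_X^{\le i}(\cI)\cO_{X'}$, completing the induction. Passing to $i=\infty$ is just a union, giving the claim for differential operators of arbitrary order.

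The main obstacle is the Leibniz bookkeeping in the induction step; it is routine only once one has chosen a local basis in which pullback and relative log derivations commute. With a less careful choice of basis one would acquire commutator terms that would still lie in $\cD_{X'}^{\le i-1}$ and could be absorbed by the inductive hypothesis, but the commuting basis provided by parameters makes the argument clean and exhibits the filtration equality directly.
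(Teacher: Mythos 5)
Your proof is correct and follows essentially the same route as the paper, which only sketches it: part (i) via the (dualized) locally free log cotangent/derivation sequence of a log smooth morphism with an explicit chart splitting, and part (ii) via that splitting together with the observation that relative derivations kill pullbacks from $X$, so by Leibniz the relative part contributes nothing and the derivation ideals agree. Your induction with a commuting parameter basis is just a more detailed write-up of the paper's one-line Leibniz argument.
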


The first claim follows from an explicit description of log derivations of log smooth morphisms (the description we gave for log manifolds extends to morphisms). The second claim follows because $\cD_{X'/X}(\cI)=0$ and hence $\cD_{X'/X}(\cI')\subseteq\cI'$ by Leibnitz rule.

\subsubsection{Log order}
Using the sheaf $\cD_X$ one can define a logarithmic analogue of the usual order. Namely, given an ideal $\cI\subseteq\cO_X$ and a point $x\in X$ the {\em log order} $\logord_x(\cI)$ of $\cI$ at $x$ is the minimal number $d$ such that $\cD_X^{\le d}(\cI)_x=\cO_{X,x}$. If no such $d$ exists, then we set $\logord_x(\cI)=\infty$. This happens if and only if the {\em differential saturation} $\cD_X^\infty(\cI)$ is non-trivial at $x$, that is, $\calD_X^\infty(\cI)_x\subsetneq\cO_x$.

\begin{exex}
(i) Each monomial $u$ has infinite logorder because $u|\partial(u)$ for any log derivation $\partial$.

(ii) Formally locally $f\in\cO_{X,x}$ can be presented as $f=\sum_{p\in P,i\in\bbN^n} c_{i,p}u^pt^i$ with $c_{i,p}\in k(y)$ and $\logord_x(f)$ is the minimal $d=i_1+\dots+i_n$ such that $c_{i,0}\neq 0$.

(iii) An element is a regular parameter (resp. a unit) at $x$ if and only if its log order at $x$ equals 1 (resp. 0).

(iv) $\logord_x(\cI)=\logord_x(\cI^\nor)$ and $d\cdot\logord_x(\cI)=\logord_x(\cI^d)$.
\end{exex}

\begin{rem}
(i) The above exercise implies that the logorder at $x$ is computed using only the classical derivations $\partial_1\..\partial_n$ (or it can be computed using the submodule $\calF_x\subseteq\cD_{X,x}$ generated by $\partial_1\..\partial_n$), while the derivations $\delta_\phi$ are irrelevant. However, $\calF_x$ depends on the choice of parameters, and working with the whole $\cD_X$ avoids choices.

(ii) In addition, we want to stress that modules $\calF_x$ behave badly under transforms with respect to admissible blowings up -- sometime they are too small. For example, if $X=\Spec(k[x,y])$ with the log structure $\bbN\log(x)\oplus\bbN\log(y)$, then there are no regular parameters at the origin $O$ (the log stratum is 0-dimensional) and $\calF_x=0$. However, the preimage of $O$ in the monomial blowing up $X'=Bl_O(X)$ contains points $z$ with the log structure generated by $\bbN\log(x-y)$, see \cite[Example~4.1.5]{lognotes}. Such a point $z$ also has a regular parameter and $\cF_z$ is one dimensional (and depends on the choice of the parameters).
\end{rem}

\subsubsection{Maximal log order}
The {\em maximal log order} of an ideal $\cI$ is $$\logord_X(\cI)=\max_{x\in X}\logord_x(X).$$ The definition of (maximal) log order we gave above follows \cite{ATW-principalization}. However, it agrees with the multiplicity of centers and the induced primary invariant of ideals in spirit of \S\ref{basicsec}(3).

\begin{exer}
(i) The multiplicity of an admissible center $\cJ$ equals $\logord_x(\cJ)$ for any $x\in V(\cJ)$.

(ii) For any ideal $\cI$ on $X$ one has that $\logord_X(\cI)$ equals the maximal value of $d\in\bbN\cup\{\infty\}$ such that $\cI$ is contained in a center of multiplicity $d$.
\end{exer}

\subsubsection{Relation to the classical order}
In fact, the log order has the following very simple geometric interpretation: the log order of an ideal is just the order of its restriction onto the log stratum.

\begin{lem}\label{logordlem}
Let $X$ be a log manifold, $C\into X$ a log stratum, $x\in C$ a point and $\cI$ an ideal on $X$. Then $\logord_x(\cI)=\ord_x(\cI\cO_C)$.
\end{lem}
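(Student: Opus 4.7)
The plan is to reduce both sides to a local computation in the completed local ring and match them elementwise. Since both $\logord$ and $\ord$ are \'etale-local and any fs log structure is Zariski in the \'etale topology, I would first pass to an \'etale neighborhood of $x$ where a full family of parameters $t_1\..t_n,\ u\:P\to\cO_{X,x}$ exists. Then $\widehat{\cO}_{X,x} = l\llbracket P\rrbracket\llbracket t_1\..t_n\rrbracket$ with $l = k(x)$, the log stratum $C$ through $x$ is cut out by $P^+$, and the quotient $\widehat{\cO}_{X,x}\twoheadrightarrow\widehat{\cO}_{C,x} = l\llbracket t_1\..t_n\rrbracket$ sets $u^p=0$ for every $p\ne 0$.

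Next I would reduce both invariants to a minimum over single elements. Since $\cO_{X,x}$ is local, the identity $\cD_X^{\le d}(\cI)_x=\cO_{X,x}$ holds iff some $D(f)$ with $D\in\cD_X^{\le d}$ and $f\in\cI$ is a unit at $x$, giving $\logord_x(\cI)=\min_{f\in\cI}\logord_x(f)$. A parallel argument using subadditivity of the classical order under sums shows $\ord_x(\cI\cO_C)=\min_{f\in\cI}\ord_x(\bar f)$, where $\bar f$ denotes the image of $f$ in $\widehat{\cO}_{C,x}$. Thus it suffices to establish the elementwise equality $\logord_x(f)=\ord_x(\bar f)$.

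For the elementwise equality I would expand $f=\sum c_{i,p}u^pt^i$ with $c_{i,p}\in l$; then $\bar f=\sum c_{i,0}t^i$ and $\ord_x(\bar f)$ is the minimal $|i|$ with $c_{i,0}\ne 0$, as recorded in the earlier example. To compute $\logord_x(f)$, I would use the explicit basis $\partial_{t_1}\..\partial_{t_n},\ \partial_{\phi_1}\..\partial_{\phi_r}$ of $\cD_{X,x}$ from the parameters exercise and evaluate $D(f)$ at $x$ for a general $D\in\cD_X^{\le d}$. The action of a basis operator has the shape
\[
\partial_t^a\partial_\phi^b(u^pt^i)\;=\;\tfrac{i!}{(i-a)!}\,\phi^b(p)\,u^pt^{i-a}
\]
(vanishing unless $a\le i$ componentwise), so evaluation at $x$ kills every contribution with $p\ne 0$ (since $u^p(x)=0$) or $i\ne a$ (since $t^{i-a}(x)=0$), while among the survivors the coefficient $\phi^b(0)$ forces $b=0$. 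This collapses $D(f)(x)$ to an $l$-linear combination of the scalars $a!\,c_{a,0}$ with $|a|\le d$, and hence $\cD_X^{\le d}(f)_x=\cO_{X,x}$ iff some $c_{a,0}$ with $|a|\le d$ is nonzero, matching $\ord_x(\bar f)\le d$.

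The main point to pin down is the collapse of the monoidal log derivations: at a point of the log stratum the derivations $\partial_\phi$ contribute nothing to $\logord_x$ because they act on monomials $u^p$ by multiplication by $\phi(p)$, and $\phi(0)=0$. Geometrically this is the very content of the lemma — along $C$ only the regular parameter directions matter, and the log order at $x$ coincides with the classical order of the restricted ideal on the regular scheme $C$.
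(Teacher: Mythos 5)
Your proof is correct, but it follows a genuinely different route from the paper's. The paper settles the lemma via Exercise~\ref{logordex}, without completions: log derivations preserve monomial ideals, hence preserve $I_C$, so there is a restriction map $\cD_X\to\cD_C$; one checks it is surjective (the $\partial_{t_i}$ restrict to a local generating set of $\cD_C$), deduces $\cD_X^{\le d}(\cI)\cO_C=\cD_C^{\le d}(\cI\cO_C)$, and concludes by locality ($\cD_X^{\le d}(\cI)_x=\cO_{X,x}$ iff $\cD_C^{\le d}(\cI\cO_C)_x=\cO_{C,x}$, since $I_{C,x}\subseteq m_x$) together with the characteristic-zero characterization of $\ord_x$ on the regular scheme $C$ by derivations. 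You instead reduce both invariants to single elements and verify $\logord_x(f)=\ord_x(\bar f)$ by expanding in $\hatcO_{X,x}$ and evaluating the basis operators $\partial_t^a\partial_\phi^b$ at $x$; this is sound --- the key points ($u^p(x)=0$ for $p\neq 0$, $\phi^b(0)=0$ for $b\neq 0$, $a!\neq 0$ in characteristic zero, continuity of differential operators on the completion, and the reduction of both orders to minima over elements) are all in place --- and it in effect reproves the formal-local formula for $\logord_x(f)$ that the paper only records as an example. What the paper's argument buys is brevity and uniformity: no \'etale passage to a Zariski chart, no completion, and no care needed at non-closed points, where your description $\hatcO_{X,x}=l\llbracket P\rrbracket\llbracket t_1\.. t_m\rrbracket$ requires a coefficient field via Cohen's theorem and $m=\dim\cO_{C,x}$ parameters rather than the closed-point statement of the parameters exercise. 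What your computation buys is an explicit, self-contained verification that makes transparent why the monoidal derivations $\partial_\phi$ are invisible to the log order along $C$.
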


\begin{exer}\label{logordex}
Observe that $\cD_X$ preserves monomial ideals, hence a natural restriction map $\cD_X\to\cD_C=\Der(\cO_C,\cO_C)$ arises. Show that it is surjective and deduce that $\cD^{\le d}(\cI)\cO_C=\cD_C^{\le d}(\cI\cO_C)$. In particular, $\cD^{\le d}(\cI)_x=\cO_{X,x}$ if and only if $\cD_C^{\le d}(\cI\cO_C)_x=\cO_{C,x}$, yielding the lemma.
\end{exer}

Since log smooth morphisms induce smooth morphisms of log strata, the lemma and the usual smooth functoriality of the order imply log smooth functoriality of the log order:

\begin{cor}\label{logordcor}
If $f\:X'\to X$ is a log smooth morphism of log manifolds, $\cI$ is an ideal on $X$ with the induced map $\logord(\cI)\:X\to\bbN\cup\{\infty\}$, and $\cI'=\cI\cO_{X'}$, then $\logord(\cI')=\logord(\cI)\circ f$.
\end{cor}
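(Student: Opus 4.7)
The plan is to prove the identity pointwise and reduce it, via Lemma~\ref{logordlem}, to the classical smooth functoriality of the order on ordinary smooth varieties. Fix $x'\in X'$, set $x=f(x')$, and let $C'\hookrightarrow X'$ and $C\hookrightarrow X$ denote the log strata through $x'$ and $x$ respectively. The first step is to observe that $f$ restricts to an ordinary smooth morphism $\tilde f\:C'\to C$; this is precisely the fact quoted in the sentence immediately preceding the corollary, and it rests on the description of log strata as loci of fixed rank of $\ocM$ together with the fact that log smoothness is preserved under strict base change, so that the strict restriction of a log smooth morphism is smooth in the classical sense.

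Next I would identify the relevant ideals. Compatibility of pullback with the commutative square formed by the strict immersions $C\hookrightarrow X$ and $C'\hookrightarrow X'$ and the pair $f,\tilde f$ gives $\cI'\cO_{C'}=(\cI\cO_{X'})\cO_{C'}=(\cI\cO_C)\cO_{C'}$. Applying Lemma~\ref{logordlem} at $x$ and at $x'$ then yields
\[
\logord_{x'}(\cI')=\ord_{x'}\bigl((\cI\cO_C)\cO_{C'}\bigr)\quad\text{and}\quad\logord_x(\cI)=\ord_x(\cI\cO_C).
\]
The classical smooth functoriality of the order for the smooth morphism $\tilde f$ between smooth varieties now gives $\ord_{x'}((\cI\cO_C)\cO_{C'})=\ord_x(\cI\cO_C)$, and combining these three equalities proves $\logord_{x'}(\cI')=\logord_x(\cI)$. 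Since $x'\in X'$ was arbitrary, this is exactly the asserted equality of functions on $X'$.

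The only real obstacle is the first step, namely the log-stratification compatibility of $f$: one has to check, in the \'etale topology where the log structure need not be Zariski, that $f$ really sends the log stratum of $x'$ to the log stratum of $x$ and that the induced map is smooth in the classical sense (rather than merely regular). Once this is granted, the remainder is routine: a diagram chase for pullbacks plus the standard fact, for a smooth morphism of smooth varieties, that completing at a point displays $\tilde f$ as $\Spec(\hatcO_{C,x}\llbracket s_1,\ldots,s_k\rrbracket)\to\Spec(\hatcO_{C,x})$, and the order of an ideal is unaffected by adjoining analytically independent variables.
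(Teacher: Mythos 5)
Your proof is correct and follows the same route the paper indicates: restrict to the log strata, apply Lemma~\ref{logordlem} at both points, and invoke classical smooth functoriality of the order for the induced smooth morphism of strata. This matches the paper's argument, which is stated in exactly these terms in the sentence preceding the corollary.
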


\subsection{Log order reduction}\label{ordredsec}
Similarly to the classical algorithm, log principalization works inductively and the primary loop decreases the maximal log order. The main difference is that there exist non-zero ideals of infinite log order. We will see that this can be remedied by a single monomial blowing up. Once the log order is finite, a direct analogue of the classical theory works, including log analogues of maximal contact and coefficient ideals.

\subsubsection{Log order reduction}
For any $d\le l=\logord_X(\cI)$ by a {\em log order $d$-reduction} we mean an $\cI$-admissible $d$-sequence $X_n\dashto X_0=X$ such that $\logord_{X_n}(\cI_n)<d$.

\begin{rem}\label{orderredrem}
(i) In \cite{ATW-principalization} one uses an equivalent definition in the style of the classical order reduction, which encodes $d$ in the notion of marked ideals $(\cI,d)$ (only with a finite $d$) and uses only centers of multiplicity 1 or infinity, $d$-admissibility and $d$-transforms.

(ii) Note that $\logord_{X_n}(\cI_n)<d$ if and only if there exist no $\cI_n$-admissible $d$-centers. Thus, a log order $d$-reduction is just a maximal (with respect to truncations) $\cI$-admissible $d$-sequence.

(iii) The most natural and basic case is when $d=l$, as in the classical algorithm we call it the {\em maximal order case}. This is the case when the maximal contact theory applies, but it is necessary to deal with the general case because of inductive reasons.
\end{rem}

\subsubsection{Monomial hull}
The {\em monomial hull} of an ideal $\cI$ on $X$ is the minimal monomial ideal $\cM(\cI)$ containing $\cI$. By definition, it is the minimal monomial $\cI$-admissible center.

\begin{lem}\label{hullem}
For any ideal $\cI$ on a log manifold $X$ the differential saturation coincides with the monomial hull: $\cD^\infty_X(\cI)=\cM(\cI)$.
\end{lem}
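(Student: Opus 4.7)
The plan is to establish the two inclusions separately. The inclusion $\cD^\infty_X(\cI)\subseteq\cM(\cI)$ is the easy direction and will follow once I verify that every monomial ideal is stable under the action of $\cD_X$. Working \'etale locally with a full family of parameters $(t_1\..t_n,u\:P\to\cO_x)$, the exercise preceding the lemma provides a basis $\partial_1\..\partial_n,\partial_{\phi_1}\..\partial_{\phi_r}$ of $\cD_{X,x}$; each $\partial_i$ annihilates monomials $u^p$, while $\partial_{\phi_j}(u^p)=\phi_j(p)u^p$. The Leibniz rule then shows that if $\cN$ is a monomial ideal, then $\cD_X(\cN)\subseteq\cN$, hence $\cD^\infty_X(\cN)\subseteq\cN$. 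Applying this to $\cN=\cM(\cI)\supseteq\cI$ yields $\cD^\infty_X(\cI)\subseteq\cM(\cI)$.

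For the harder direction $\cM(\cI)\subseteq\cD^\infty_X(\cI)$, I will show that $\cJ:=\cD^\infty_X(\cI)$ is itself a monomial ideal: then $\cJ$ is a monomial ideal containing $\cI$, and the minimality of $\cM(\cI)$ forces $\cM(\cI)\subseteq\cJ$. Monomiality can be checked after passing to $\hatcO_{X,x}=l\llbracket P\rrbracket\llbracket t_1\..t_n\rrbracket$ (with $l=k(x)$), where every element admits a unique expansion $f=\sum_{j\in\bbN^n,\,p\in P}c_{j,p}u^pt^j$. The target claim is that for every $f\in\cI$ and every $(j,p)$ with $c_{j,p}\neq 0$, the monomial $u^p$ lies in $\cJ$; since such monomials generate $\cM(\cI)$ locally, this finishes the argument.

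The extraction proceeds in two stages. First, the regular derivations $\partial_i$ commute with multiplication by monomials $u^p$ and act on $t^j$ as ordinary polynomial derivatives, so a suitable polynomial operator in the $\partial_i$'s produces, modulo the ideal $(t_1\..t_n)$, the element $\sum_p c_{j,p}u^p$ for a prescribed exponent $j$; combined with an $\mathfrak{m}$-adic truncation (or a reduction to the henselian polynomial model $l[P][t_1\..t_n]$ via \'etale descent and Lemma~\ref{derfunct}), this puts a finite sum $\sum_{p\in S}c_{j,p}u^p$ into $\cJ$. Second, the log derivations $\partial_\phi$ separate the distinct monomials: since $\partial_\phi(u^p)=\phi(p)u^p$ and $\mathrm{char}(k)=0$, I choose $\phi\in\Hom(P^\gp,k)$ taking pairwise distinct values on the finite set $S$ of relevant exponents and apply a Lagrange-interpolation polynomial in the single operator $\partial_\phi$ to isolate each $c_{j,p}u^p$ individually. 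Dividing by the nonzero scalar $c_{j,p}$ delivers $u^p\in\cJ$, as required.

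The main obstacle is synchronizing the two operations cleanly: the $t$-extraction naturally produces an infinite sum over $p\in P$, whereas the interpolation argument is a finite device. I expect to resolve this by working throughout with finite $\mathfrak{m}$-adic truncations (the generators of $\cM(\cI)$ can be read off from any sufficiently large truncation, since $\cM(\cI)$ is finitely generated) or, more robustly, by reducing to the polynomial toric model $\Spec(k[P][t_1\..t_n])$ via an \'etale chart, where all $t$-expansions are automatically polynomial. Once the claim is proved locally, the non-Zariski case descends by \'etale functoriality of $\cD_X$, of $\cD^\infty_X$, and of the monomial hull, which globalizes the identity to all of $X$.
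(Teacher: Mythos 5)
Your strategy coincides with the paper's: the easy inclusion via stability of monomial ideals under log derivations, and the hard inclusion via a formal-local computation in $\hatcO_{X,x}=l\llbracket P\rrbracket\llbracket t_1\..t_n\rrbracket$ that extracts the monomials $c_{j,p}u^p$ from an expansion of $f\in\cI$ by applying differential operators, with a truncation argument to handle infinite expansions; this is exactly the content of Exercise~\ref{operatorexer}, which is the paper's proof. The only real difference is the order of operations: the paper first separates monomials using the $t$-logarithmic operators $\prod_i(t_i\partial_i)^{a_i}\prod_j\partial_{\phi_j}^{b_j}$, for which every term $c\,t^du^p$ is an eigenvector and which preserve all powers of $m_A$ (this is what makes the continuity/truncation step painless), and only at the end applies the $\partial_i$ to clear the $t$-powers; you apply $\partial^j$ first and then interpolate in $\partial_\phi$ alone.

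Two cautions on your synchronization step. First, the ``more robust'' fallback of reducing to the polynomial model $\Spec(k[P][t_1\..t_n])$ via an \'etale chart does not work: $\cI$ is not pulled back from the model and an element of $\cO_{X,x}$ has no polynomial $t$-expansion -- only its image in the completion does -- so the truncation route is the only available one (it is what the paper's hint calls continuity). Second, in that route note that your first stage does not actually place $\sum_p c_{j,p}u^p$ in $\cJ$: what lies in $\cJ$ is $\partial^j(f)$, which still contains the positive $t$-degree terms and the infinite tail, and $\cJ$ is not closed under taking the $t$-degree-zero part. The correct bookkeeping is to apply $Q(\partial_\phi)\circ\partial^j$ to $f$ itself, with $\phi$ separating $p_0$ from the finitely many exponents occurring in the truncation $f_{<N}$ (possible since $P^\gp$ is torsion free and the characteristic is zero) and $Q$ the interpolation polynomial: the surviving positive $t$-degree terms all carry the monomial $u^{p_0}$, so the output is $u^{p_0}$ times a unit, up to an error in $m_A^{N-|j|}$ coming from $f_{\ge N}$ (here one uses that $\partial_\phi$ preserves powers of $m_A$ and $\partial^j$ lowers the order by at most $|j|$). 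Letting $N\to\infty$ and using that ideals of the complete noetherian local ring are closed gives $u^{p_0}\in\cJ\hatcO_{X,x}$, and the formal-local reduction you already invoked finishes the proof. With these points made explicit your argument is complete and is a mild reordering of the paper's.
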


Since any monomial ideal is preserved by $\cD_X$ and hence is differentially saturated, one has an inclusion $\cD^\infty_X(\cI)\subseteq\cM(\cI)$. The fact that it is an equality can be checked formally-locally. One proof is given in \cite[Theorem~3.4.2]{ATW-principalization}, we outline another one close in spirit to the argument in the proof of the first part of \cite[Proposition~3.4.3]{ATW-relative}. The following exercise is rather difficult, so our hint is in fact a sketch of the whole argument.

\begin{exer}\label{operatorexer}
Let $P$ be a sharp toric monoid, $l$ a field and $A=l\llbracket P\rrbracket\llbracket t_1\..t_n\rrbracket$. Let $\cD^\infty$ be the algebra of differential $l$-operators $A\to A$ generated by $\partial_i=\partial_{t_i}$ and $\partial_\phi$ with $\phi\in\Hom(P^\gp,l)$. Then for any $f=\sum_{d\in\bbN^n,p\in P} c_{d,p}t^du^p$ the ideal $(\cD^\infty(f))$ coincides with the monomial ideal generated by the elements $c_{d,p}u^p$. (Hint: again, the inclusion is clear, so it suffices to show that $(\cD^\infty(f))$ contains each $c_{d,p}u^p$. Let $\cD^\infty_t$ be the subalgebra of operators which are also $t$-logarithmic, that is, the subalgebra generated by the elements $t_i\partial_i$ and $\partial_\phi$. Any monomial $c_{d,p}t^du^p$ is an eigenvector of any monomial operator $$\partial=\prod_{i=1}^n(t_i\partial_i)^{a_i}\cdot\prod_{j=1}^r\partial_{\phi_j}^{b_j}.$$ Show that if $f$ is a finite sum, then there exists a monomial operator $\partial$ which has different eigenvalues with respect to all non-zero monomials $c_{d,p}t^du^p$ of $f$, and deduce that already the $l$-linear span of the elements $\partial^m(f)$ contains all monomials $c_{d,p}t^du^p$. Then remove the finiteness assumption by continuity: $f=f_{<n}+f_{\ge n}$ with $f_{\ge n}\in m_A^n$ and the above argument works up to a correction term lying in $m_A^n$ because $\partial(m_A)\subseteq m_A$. Finally, applying the derivations $\partial_i$ one clears off the powers of $t_i$ and obtains that each $c_{d,p}u^p$ lies in $(\cD^\infty(f))$.)
\end{exer}

From Lemmas \ref{hullem} and \ref{derfunct}(ii) we obtain that the monomial hull is functorial.

\begin{cor}\label{hullcor}
If $X'\to X$ is a log smooth morphism of log manifolds, $\cI$ is an ideal on $X$ and $\cI'=\cI\cO_{X'}$, then $\cM(\cI')=\cM(\cI)\cO_{X'}$.
\end{cor}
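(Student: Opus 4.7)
The plan is to obtain the corollary as a short formal consequence of the two preceding lemmas, essentially by chaining them. There is no need to introduce any new geometry or reprove functoriality from scratch, since the heavy lifting has already been done: Lemma~\ref{hullem} identifies the monomial hull with the differential saturation (an intrinsic, coordinate-free characterization), and Lemma~\ref{derfunct}(ii) asserts compatibility of the derivation ideals $\cD_X^{\le i}(\cI)$ with pullback along a log smooth morphism for every $i\le\infty$. Once both statements are available, the corollary is a one-line identification, so the real work has already been carried out in the proofs of those two inputs.

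More precisely, I would argue as follows. First, apply Lemma~\ref{hullem} on $X'$ to rewrite
\[
\cM(\cI') \;=\; \cD_{X'}^{\infty}(\cI').
\]
Next, invoke Lemma~\ref{derfunct}(ii) with $i=\infty$ and $\cI'=\cI\cO_{X'}$ to get
\[
\cD_{X'}^{\infty}(\cI') \;=\; \cD_X^{\infty}(\cI)\,\cO_{X'}.
\]
Finally, use Lemma~\ref{hullem} on $X$ to replace the right-hand side by $\cM(\cI)\cO_{X'}$, yielding $\cM(\cI') = \cM(\cI)\cO_{X'}$ as required.

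The only point worth checking is that Lemma~\ref{derfunct}(ii) is indeed being applied with $i=\infty$, which the statement explicitly allows. If one wanted to be pedantic about infinite unions, one can take a directed colimit: $\cD_X^\infty(\cI) = \bigcup_{i\ge 0}\cD_X^{\le i}(\cI)$, and pullback of ideals along $f\:X'\to X$ commutes with filtered unions, so
\[
\cD_X^\infty(\cI)\cO_{X'} \;=\; \bigcup_{i\ge 0}\cD_X^{\le i}(\cI)\cO_{X'} \;=\; \bigcup_{i\ge 0}\cD_{X'}^{\le i}(\cI') \;=\; \cD_{X'}^{\infty}(\cI'),
\]
using Lemma~\ref{derfunct}(ii) for each finite $i$. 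So no genuine obstacle arises; the only mild subtlety is making sure the passage to $i=\infty$ is justified, and that is immediate from the fact that the filtration is exhaustive and tensoring with $\cO_{X'}$ preserves filtered colimits of ideals.
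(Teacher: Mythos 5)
Your proof is correct and is exactly the paper's argument: the corollary is stated there as an immediate consequence of Lemma~\ref{hullem} (monomial hull equals differential saturation) combined with Lemma~\ref{derfunct}(ii) applied with $i=\infty$. The extra remark about exhausting $\cD_X^\infty(\cI)$ by the finite-order pieces is a fine sanity check but not a departure from the paper's route.
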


\subsubsection{Making log order finite}
Non-triviality of the monomial hull detects whether the maximal log order is infinite, so it is natural to expect that blowing it up makes $\logord_X(\cI)$ finite.

\begin{lem}\label{finorderlem}
Let $X$ be a log manifold and $\cI\subseteq\cO_X$ an ideal with the monomial hull $\cN=\cM(\cI)$. Let $X'=Bl_\cN(X)\to X$ be the $\cI$-admissible blowing up along $\cN$ and let $\cI'$ be the transform of $\cI$. Then $\logord_{X'}(\cI')<\infty$.
\end{lem}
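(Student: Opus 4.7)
The plan is to show that $\cM(\cI')=\cO_{X'}$, since by Lemma~\ref{hullem} this is equivalent to $\cD^\infty_{X'}(\cI')=\cO_{X'}$, and hence to $\logord_{x'}(\cI')<\infty$ at every $x'\in X'$.

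The crucial input is that the admissible blowing up $f\:X'\to X$ along the \emph{monomial} center $\cN$ is log smooth. Indeed, as indicated in the side remark following Exercise~\ref{admblowex}, for a monomial $\cN$ the normalized blowing up $Bl^\nor_\cN(X)$ coincides with the saturated log blowing up $LogBl_\cN(X)^\sat$, and the latter is log \'etale (in particular log smooth) over $X$. Corollary~\ref{hullcor} then yields $\cM(\cI\cO_{X'})=\cM(\cI)\cO_{X'}=\cN\cO_{X'}$. Moreover, the explicit chart description of Exercise~\ref{admblowex} shows that $\cN\cO_{X'}$ is locally principal, generated by one of the monomial generators $s$ of $\cN$; in particular $s$ is a monomial for the log structure of $X'$. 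By the definition of the transform, $\cI\cO_{X'}=s\cdot\cI'$.

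With these ingredients in hand, fix $x'\in X'$ and suppose for contradiction that $\cM(\cI')_{x'}\subsetneq\cO_{X',x'}$. Then $s\cdot\cM(\cI')$ is a monomial ideal on $X'$, and since $(s)$ is invertible, multiplication by $s$ preserves strict inclusion, so $s\cdot\cM(\cI')\subsetneq(s)=\cN\cO_{X'}$. On the other hand, $s\cdot\cM(\cI')\supseteq s\cdot\cI'=\cI\cO_{X'}$, so $s\cdot\cM(\cI')$ would be a monomial ideal containing $\cI\cO_{X'}$ and strictly smaller than $\cN\cO_{X'}$, contradicting the minimality $\cM(\cI\cO_{X'})=\cN\cO_{X'}$. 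Hence $\cM(\cI')=\cO_{X'}$, proving $\logord_{X'}(\cI')<\infty$.

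The only delicate point is the identity $\cM(\cI\cO_{X'})=\cN\cO_{X'}$: this is where log smoothness of the blowup along a monomial center, and therefore the functoriality Corollary~\ref{hullcor} (itself built on Lemmas~\ref{hullem} and~\ref{derfunct}(ii)), is really used. Once that identity is in place, the remainder of the argument is a short manipulation with principal monomial ideals.
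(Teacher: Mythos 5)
Your proof is correct and follows essentially the same route as the paper: log \'etaleness of the monomial blowing up combined with Corollary~\ref{hullcor} to identify $\cM(\cI\cO_{X'})$ with the invertible ideal $\cN\cO_{X'}$, and then minimality of the monomial hull to conclude $\cM(\cI')=\cO_{X'}$. The only difference is cosmetic: you argue locally by contradiction with a principal generator $s$, while the paper phrases the same minimality argument globally via $\cI\cO_{X'}\subseteq\cM(\cI')\cN'$.
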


Indeed, the monomial blowing up $X'\to X$ is log \'etale, hence by Corollary~\ref{hullcor} $\cN'=\cM(\cI\cO_X')$ coincides with the invertible ideal $\cN\cO_{X'}$ and $\cI'=(\cI\cO_{X'})\cN'^{-1}$. Therefore, $\cI\subseteq\cM(\cI')\cN'$ and by the minimality of $\cN'$ the monomial hull of $\cI'$ is trivial: $\cM(\cI')=\cO_{X'}$. The lemma follows.

There is a more concrete way to prove the lemma. It is very instructive for understanding the mechanism of cleaning off the monomial part, so we outline it below.

\begin{exer}
Let $x\in X$ be the image of $x'\in X'$, let $f_1\..f_m$ be generators of $\cI_x$ and let $f_i=\sum_{p,j}c_{i,j,p}t^ju^p$ be their expansions in $\hatcO_{X,x}=l\llbracket P\rrbracket\llbracket t_1\..t_n\rrbracket$. Then by Exercise~\ref{operatorexer}, $\hatcN_x$ is generated by all monomials $s=c_{i,j,p}u^p$ with a non-zero $c_{i,j,p}$. Show that the transform $s^{-1}f_i$ of $f_i$ on the $s$-chart of the blowing up of $\Spec(\hatcO_x)$ along $\hatcN_x$  is of order bounded by $|j|=j_1+\dots+j_n$, and hence this blowing up achieves that the order of the transform of $\cI_x$ is finite.
\end{exer}

\subsubsection{Coefficient ideals}\label{coeffsec}
Once $d=\logord_X(\cI)$ is finite we can define the (logarithmic) {\em coefficient ideal} of $\cI$ to be the weighted sum $\cC(\cI)=\sum_{a=0}^{d-1}(\cD_X^{\le a}(\cI),d-a)$, where we use the following homogenized definition of the sum of ideals $\cI_1\..\cI_n$ with a tuple of weights $l=(l_1\..l_n)$:
$$\sum_{i=1}^n(\cI_i,l_i)=\sum_{a\in\bbN^n|\ (a,l)\le l_1\dots l_n} \cI_1^{a_1}\cdots\cI_n^{a_n}.$$

\begin{rem}
Homogenized sums are introduced in \cite[\S5.1.7]{ATW-relative}. In the usual definition, which is also used in \cite[\S5.2(1)]{ATW-principalization}, one considers the smaller weighted sum ideal $\sum_{i=1}^n\cI_i^{d/d_i}$, where $d=d_1\dots d_n$. By the same reason as was mentioned in Remark~\ref{Rem:coefficient-ideals}, working with the two versions is equivalent, though technically it might be more convenient to use the homogenized version.
\end{rem}

\subsubsection{Maximal contacts}
Assume that $\cI$ is an ideal on a log manifold $X$ and $d=\logord_X(\cI)<\infty$. A submanifold $V\into X$ is called a {\em maximal contact} to $\cI$ if locally at any $x\in X$ with $\logord_x(\cI)=d$ it is of the form $V=V(t)$ with $t\in\cD^{\le d-1}_X(\cI)_x$. Clearly, maximal contacts exist locally at any point $x\in X$ with $\logord_x(\cI)=d$: since $\cD_X^{\le d}(\cI_x)=\cO_x$, there exists $t\in\cD^{\le d-1}_X(\cI)_x$ such that $\cD_X(t)$ contains a unit of $\cO_x$. This element is of log order 1, hence it is a regular parameter at $x$ and $V(t)$ is a log manifold in a neighborhood of $x$.

\subsubsection{The key results}
The theory of maximal contact is summarized in the following two theorems. Here we only formulate them and discuss. The arguments are very close to their analogues in the classical algorithm and will be outlined later. The first theorem establishes induction on dimension when constructing log order reduction:

\begin{theor}\label{equivth}
Assume that $X$ is a log manifold, $\cI$ is an ideal on $X$ and $H$ is a maximal contact to $\cI$. Then an admissible $d$-sequence of blowings up $X_n\dashto X_0=X$ with centers $\cJ_i^{(d)}$ is $\cI$-admissible if and only if for any $0\le i\le n-1$ we have that $I_{H_i}\subset\cJ_i$, where $H_i\into X_i$ is the strict transform of $H$, and the induced admissible $d!$-sequence $H_n\dashto H_0=H$ with centers $(\cJ_i|_{H_i})^{(d!)}$ is $\cC(\cI)|_H$-admissible.
\end{theor}

In view of Remark \ref{orderredrem}(ii), log order $d$-reductions of $\cI$ are in a natural one-to-one correspondence with log order $d!$-reductions of $\cC(\cI)|_H$. This reduces the log order reduction on $X$ in the maximal order case to a log order reduction problem on a hypersurface, but the latter does not have to be maximal because the log order of $\cC(\cI)|_H$ can be strictly larger than $d!$. In fact, it can even be infinite.

The second theorem ensures that the obtained log order reduction on $X$ is independent of the local choice of a maximal contact and hence is functorial and globalizes. It is an \'etale refinement of the claim that for any pair of maximal contacts, there is an isomorphism of formal completions of $X$ along them which respects the coefficient ideal.

\begin{theor}\label{homogth}
Assume that $X$ is a log manifold, $\cI$ is an ideal on $X$ with $d=\logord_X(\cI)<\infty$ and $H_1,H_2$ are two maximal contacts to $\cI$. Then there exists a pair of \'etale covers $p_i\:X'\to X$, $i=1,2$ such that $p_1^{-1}\cC(\cI)=p_2^{-1}\cC(\cI)$, $p_1^*H_1=p_2^*H_2$ and for any $\cI$-admissible $d$-sequence $X_n\dashto X_0=X$ its pullbacks with respect to $p_1$ and $p_2$ coincide.
\end{theor}

\begin{rem}
In the classical principalization there are two ways to prove independence of maximal contact. W{\l}odarczyk's homogenization method, which is transferred to the log setting in the above theorem, and Hironaka's trick with a delicate definition of equivalence of marked ideals, see \cite{Bierstone-Milman-funct}. Hironaka's trick can be extended to the log setting too, as was worked out in \cite[\S7.2]{ATW-relative}.
\end{rem}

\subsubsection{The non-maximal order case}
The above section shows how one deals with the maximal order case. It remains to construct a general log order $d$-reduction using the maximal order case. If $\logord(\cI)=l\ge d$ this is done by splitting ideals into the product of a {\em clean part} of a finite log order and an invertible monomial ideal and is based on the following observation:

\begin{lem}\label{balancedlem}
Let $X$ be a log manifold with an ideal $\cI\cN$, where $\cI$ is of maximal log order $l$ and $\cN$ is invertible monomial, and let $1\le d\le l$. Assume that $X'=X_n\dashto X_0=X$ is an $\cI$-admissible $l$-sequence with centers $\cJ_i^{(l)}$, and let $\cI'$ denote the transform of $\cI$. Then the same sequence of morphisms underlies the $\cI\cN$-admissible $d$-sequence with centers $\cJ_i^{(d)}$ and its transform is of the form $(\cI\cN)'=\cI'\cN'$, where $\cN'$ is invertible monomial.
\end{lem}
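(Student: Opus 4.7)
The plan is to proceed by induction on the length $n$ of the sequence. The base case $n=0$ is immediate, with $\cN_0=\cN$. For the inductive step, suppose the statement is already established for the first $n-1$ blowings up, so that these also underlie an $\cI\cN$-admissible $d$-sequence with centers $\cJ_i^{(d)}$ and transform $(\cI\cN)_{n-1}=\cI_{n-1}\cN_{n-1}$, with $\cN_{n-1}$ invertible and monomial on $X_{n-1}$.

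First I would verify that $\cJ_{n-1}^{(d)}$ is $(\cI\cN)_{n-1}$-admissible. Since $l\ge d$ one has $\cJ_{n-1}^{l}\subseteq \cJ_{n-1}^{d}$, and passing to integral closures gives $\cJ_{n-1}^{(l)}\subseteq \cJ_{n-1}^{(d)}$. Combined with the hypothesis $\cI_{n-1}\subseteq \cJ_{n-1}^{(l)}$ and with $\cN_{n-1}\subseteq\cO_{X_{n-1}}$, this yields
\[
\cI_{n-1}\cN_{n-1}\ \subseteq\ \cJ_{n-1}^{(l)}\cN_{n-1}\ \subseteq\ \cJ_{n-1}^{(d)}.
\]
Moreover, $\cJ_{n-1}^{(d)}$ and $\cJ_{n-1}^{(l)}$ produce the same normalized blowing up (both are integral closures of powers of $\cJ_{n-1}$, cf.\ Remark~\ref{admrem}(i)), so the underlying morphism $f_{n-1}\colon X_n\to X_{n-1}$ is unchanged when we declare the center to be a $d$-center.

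Next I would compute the $d$-transform and compare it with the $l$-transform. Setting $E=\cJ_{n-1}\cO_{X_n}$, which is an invertible ideal on $X_n$, one has by definition
\[
(\cI\cN)_n=(\cI_{n-1}\cN_{n-1})\cO_{X_n}\cdot E^{-d},\qquad \cI_n=\cI_{n-1}\cO_{X_n}\cdot E^{-l},
\]
and the telescoping $E^{-d}=E^{-l}\cdot E^{\,l-d}$ gives
\[
(\cI\cN)_n=\cI_n\cdot\bigl(E^{\,l-d}\cdot \cN_{n-1}\cO_{X_n}\bigr).
\]
Defining $\cN_n=E^{\,l-d}\cdot \cN_{n-1}\cO_{X_n}$, it remains to argue that $\cN_n$ is invertible and monomial. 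Invertibility is clear since both factors are invertible. Monomiality follows because the log structure on $X_n$ is $\cM(\log D_n)$ with $D_n=f_{n-1}^{-1}(D_{n-1}\cup V(\cJ_{n-1}))$, so $E$ is supported on components of $D_n$ and is monomial, while $\cN_{n-1}\cO_{X_n}$ is the pullback of a monomial ideal along a morphism of log schemes and hence is monomial with respect to the enlarged log structure.

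The main technical point is the bookkeeping around the two notions of admissibility and transform: one has to check the inclusion $\cJ^{(l)}\subseteq \cJ^{(d)}$ for $l\ge d$ (which is just monotonicity of integral closure of powers), and one has to confirm that the monomial property of $\cN_{n-1}$ survives both the pullback to $X_n$ (under the log structure enlarged by the exceptional divisor) and the multiplication by the power $E^{\,l-d}$. Once these facts are in hand, the inductive step is forced by the telescoping factorization above, and the lemma follows.
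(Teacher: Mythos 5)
Your proof is correct and follows essentially the same route as the paper, which reduces by induction to a sequence of length one and records the same formula $\cN'=\cN\cI_E^{l-d}$; your write-up merely spells out the admissibility check $\cJ^{(l)}\subseteq\cJ^{(d)}$ and the monomiality of the new factor, which the paper leaves implicit.
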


Indeed, the claim reduces by induction to a sequence of length one, and then $\cN'=\cN\cI_E^{l-d}$, where $E$ is the exceptional divisor.

In view of the lemma, one simply applies the sequences produced by the maximal order case of the log order reduction of the clean part with $l,l-1\..d$ and replaces in this sequence the $l$-centers, $l-1$-centers, etc. by the associated $d$-centers. This collects an invertible monomial factor, which can be removed in the end by a single blowing up.

\subsection{The lacuna}
At first glance we already have all necessary ingredients to combine them into a working algorithm, but there is left just one more case which we did not discuss... Namely, what happens for the log order $d$-reduction if $\logord_X(\cI)$ is infinite? Our method in the infinite order case is to blow up the monomial hull $\cN=\cM(\cI)$, but here the center must be a $d$-center. And sometimes $\cN$ is a $d$-center, but sometimes it is not... At first glance one might hope that the obstacle is not serious, and another method should work, so we are going to start with simplest examples which illustrate that one cannot go ahead without modifying the basic framework. Also, we will use this oportunity to make some comparison with the classical algorithm.

\begin{exam}\label{lacunaex}
We will principalize ideals on $X=\Spec(k[t,u])$ with the log structure $\bbN\log(u)$. By $O$ we denote the origin.

(i) Let $\cI=(t+u^2)$. In this case, $\logord_O(\cI)=1$, $H=V(t)$ is a maximal contact, $\cC(\cI)=(t,u^2)$ and the ideal $\cC(\cI)|_H=(u^2)$ is monomial and hence it is principalized by blowing it up. By Theorem~\ref{equivth} $\cI$ is principalized by the single blowing up along $(t,u^2)$. For comparison notice that the classical algorithm without log structures would use the same maximal contact and coefficient ideal, but would ``blow up $u$\ twice", so the centers would be $(t,u)$ and $(t',u')$.

(ii) Let $\cI=(t^2+u^2)$. In this case, $\logord_O(\cI)=2$, $H=V(t)$ is a maximal contact, $\cC(\cI)=(t^2,tu,u^2)$ and the ideal $\cC(\cI)|_H=(u^2)$ is monomial and principalized by blowing up the $2$-center $(u^2)$. Hence $\cI$ is principalized by the single blowing up along $(t,u)$. The classical algorithm acts in the same way in this case. The only difference is that it can also take $V(u)$ as the maximal contact, but this is forbidden in the logarithmic case.

(iii) Finally, let $\cI=(t^2+u)$. Then $\logord_O(\cI)=2$, $H=V(t)$ is a maximal contact, $\cC(\cI)=(t^2,u)$ and the ideal $\cC(\cI)|_H=(u)$ is monomial and principalized by blowing up the center $(u)$, which is not a 2-center. We are stuck. The classical algorithm acts completely differently in this case, as the usual order is 1 and the classical maximal contact is $V(u)$ but not $V(t)$.
\end{exam}

\subsubsection{Consequences of log smoothness}
In fact, the examples (ii) and (iii) are related by a log-\'etale cover. Namely, consider $X$ and $\cI$ as in (iii), the Kummer covering $Y=\Spec(k[t,v=u^{1/2}])$ of $X$ and the ideal $\cI'=\cI\cO_{X'}=(t^2+v^2)$. By the log smooth functoriality, the principalizations of $\cI$ and $\cI'$ must be compatible, and the principalization of $\cI'$ is done by blowing up $(t,v)$. This leaves us no choice -- the principalization $X'\to X$ of $\cI$ should be the blowing up of the center $(t,u^{1/2})$, whatever it means, such that $Y'=X'\times_XY$ as fs log schemes. Fortunately, functoriality considerations not only show that the method is in a conundrum in the basic framework, it also indicates how the framework should be extended:

\begin{rem}\label{solrem}
(i) First, it is clear that $X'$ should be the quotient of $Y'$ by the action of $\mu_2$, and since the schematic quotient is not log smooth we must take a stack-theoretic quotient -- this is the only way to obtain a (non-representable) modification $X'\to X$ such that $Y'=X'\times_XY$ in the fs category.

(ii) Second, this also suggests a way to formalize the notion of centers of the form $(t_1\..t_n,\cN^{1/d})$, where $\cN$ is monomial. Similarly to the blowings up, they are well defined after adjoining enough roots of monomials, so we just should work Kummer \'etale locally (see \cite[\S4.4]{lognotes}) and then any monomial center becomes a $d$-center for any natural number $d$. For ideals this is even a simpler formalism, which does not require to introduce stacks (but makes sense for stacks as well).
\end{rem}

In the next section we will extend the logarithmic framework along these lines by switching to log orbifolds and introducing Kummer ideals and blowings up, and then all constructions of this section will immediately extend and give rise to a logarithmic principalization algorithm.

\subsubsection{Monomial democracy}
To conclude the section we deduce from the log smooth functoriality a principle which explains most of the differences between the classical and the logarithmic algorithms. If $P\into Q$ is an embedding of toric monoids, then the homomorphism $Y=X_P[Q]\to X$ is log smooth, and the functoriality implies that principalization of $\calI$ on $X$ should pull back to principalization of $\calI\calO_Y$ on $Y$. In particular, the information about arithmetic properties of a monomial $p\in P$, such as the knowledge whether $p$ is a square in $P$, cannot be used by a log smooth functorial algorithm. Informally speaking, this means that a monomial $p\in P$ ``does not know'' in which monoid it lives, and this can be also phrased as the following ``monomial democracy'' principle in the style of Orwell: all monomials are equal. In particular, the principle explains why, unlike the classical algorithm, we allow any set of monomial generators in the definition of admissible centers and why the log order of monomials has to be infinite. Loosely speaking, the algorithm deals with the regular parameters by the classical methods, and it does not distinguish the monomial parameters -- just forms the monomial hull and blows it up at once.

Note for completeness that other orders are possible if one only constructs a smooth functorial method. For example, one orders monomial parameters by $1_+$ in a weighted resolution with boundary, see \cite[Remark~2.3.14]{weighted}.

\section{The logarithmic algorithm}\label{sec4}
Our goal now is to extend the framework by including stacks and Kummer ideals and blowings up, and then to finally describe the log principalization algorithm.

\subsection{The framework}
To extend the framework we have to switch to stacks and develop a bit the technique of Kummer \'etale localization.

\subsubsection{Geometric objects}\label{Sec:log-orbifold}
We will work with the 2-category of fs logarithmic DM stacks $X$ of finite type over $k$, and such an $X$ will be called a {\em log orbifold} if it is log smooth. Thus log manifolds are representable log orbifolds, and log orbifolds will play the role of manifolds in the realm of log DM stacks. Log smooth substacks will be called {\em suborbifolds} and their ideals \emph{suborbifold ideals}.

When working with stacks the default topology is \'etale, so $\cO_X$ and its ideals, the sheaves $\cD_X$, etc., are \'etale sheaves. This is based on the fact that in the case of schemes $\cO_X,\cD_X,\cD^\infty_X$, etc. are, in fact, \'etale sheaves (even flat sheaves) and hence their formations extend to DM stacks.

\begin{rem}
(i) Recall that the usual definition of log structures is \'etale local hence extends to DM stacks. Alternatively, one can define it via a descent: if a stack $\cX$ has a smooth presentation $X_0\to\cX$ and $X_1=X_0\times_\cX X_0$, then the log structure on $\cX$ is the same as a log structure $\cM_0$ on $X_0$ and an isomorphism of pullbacks $p_0^*\cM_0\toisom p_1^*\cM_0$ satisfying the usual cocycle condition. The latter definition also applies to arbitrary Artin stacks.

(ii) One can consider wider or narrower categories, the main restriction is that they should be closed under Kummer blowings up defined below. In particular, one can work with arbitrary Artin stacks or one can restrict the setting to DM stacks with finite diagonalizable inertia, as one does in \cite{ATW-principalization}. Our choice is the same as in \cite{ATW-relative}, it is the most general case when one can still work with the \'etale topology and all details of the proofs are literally the same.
\end{rem}

\subsubsection{Kummer ideals}
Kummer ideals are just ideals of the structure sheaf in the Kummer \'etale topology. For any log stack $X$ by $X_\ket$ we denote the category of log stacks over $X$ such that the morphism $X'\to X$ is Kummer \'etale. Declaring surjective morphisms to be covers we obtain a Grothendieck topology on $X_\ket$. The structure presheaf $\calO_{X_\ket}$ defined in the usual way by $\calO_{X_\ket}(U)=\Gamma(\calO_U)$ is in fact a sheaf by Kummer descent results of Niziol, see \cite[Proposition~2.18]{Niziol-K-theory-of-log-schemes-I}. A {\em Kummer ideal} is an ideal $\cI\subseteq\calO_{X_\ket}$. Such ideals are subject to all usual operations: sums, products, etc. Also, the sheaves of differential operators are Kummer so one can derive ideals and we will use the same notation $\cD^{\le d}_X(\cI)$.

Any ideal $\calJ\subseteq\calO_X$ generates the Kummer ideal $\calJ_\ket=\calJ\calO_{X_\ket}$ and by Kummer descent, the restriction of $\calJ_\ket$ onto $\calO_X$ is $\calJ$. In this case we will say that the Kummer ideal $\calJ_\ket$ is an ordinary ideal and by an abuse of notation will denote it also by $\calJ$. A Kummer ideal is called {\em monomial} if it is generated by monomials Kummer-locally.

\begin{rem}
In \cite{ATW-principalization} one also assumes that the ideal is finitely generated, but this will not make any difference. We will only consider usual ideals and admissible centers, which are combined from finitely generated ideals by arithmetic operations and saturation. The output of saturation  is usually not a finitely generated Kummer ideal, but it is still easily controllable.
\end{rem}

\begin{exex}\label{kumex}
(i) If $p\in\Gamma(\calM_X)$ is a global monomial and $d\ge 1$ is invertible on $X$ (which is automatically the case when the characteristic is zero), then the ideal $(p)$ is a $d$-th power of the uniquely defined invertible Kummer monomial ideal which we denote $(p^{1/d})$. Indeed, Kummer-locally this ideal is generated by a $d$-th root of $p$.

(ii) Any finitely generated Kummer monomial ideal is \'etale-locally of the form $(p_1^{1/d}\..p_r^{1/d}):=(p_1^{1/d})+\dots+(p_r^{1/d})$ for monomials $p_i$.

(iii) More generally, for a monomial ideal $\calJ$ let $\calJ^{1/d}$ be the ideal generated by $d$-th roots of the monomial generators of $\calJ$. It is a monomial Kummer ideal whose $d$-th power is integral over $\cJ$.

(iv) Give an example of a finitely generated monomial ideal $\cJ$ whose saturation as a Kummer ideal is not finitely generated. (Hint: one can take $X=\Spec(k[u,v])$ with the log structure $\bbN\log(u)+\bbN\log(v)$ and $\cJ=(u,v)$. Then its saturation in the Kummer topology contains all Kumer ideals of the form $((u^av^b)^{1/(a+b)})$ with $a,b\ge 1$ and hence is not finitely generated.)
\end{exex}

\subsubsection{Admissible centers}\label{centerssec}
The definitions are the same as for log schemes. A 1-center is a Kummer ideal of the form $\calI+\calN^\sat$, where $\calI$ is a suborbifold ideal (see \ref{Sec:log-orbifold}) and $\cN$ is a finitely generated monomial Kummer ideal. A $d$-center is of the form $\cJ=\cJ_1^{(d)}=(\cJ_1^d)^\nor$, where $\cJ_1$ is a 1-center. Any non-monomial center is a $d$-center for a unique $d$, while any monomial center is a $d$-center for any $d\ge 1$ by Exercise~\ref{kumex}(iii). In fact, one can even describe $\cJ$ in terms of saturation and arithmetic operations on ideals, without the need to use the more complicated integral closure operation, see \cite[Lemma~4.3.2(1)]{ATW-principalization}.

\begin{exer}
Show that $(\calI+\cN)^{(d)}=\sum_{i=0}^d\cI^i(\cN^{d-i})^\sat$. (Hint: the claim is Kummer local, hence it suffices to consider the case when $X$ is a log manifold and $\cN$ is a monomial ideal. Then increasing the log structure by $\oplus_{i=1}^n\bbN\log(t_i)$, where $\cI=(t_1\..t_n)$, the claim reduces to the standard lemma that a monomial ideal is saturated if and only if it is integrally closed.)
\end{exer}

\subsubsection{Kummer blowings up: two approaches}\label{twoapproaches}
There are a few possible definitions. A first construction was introduced in \cite[\S5.4]{ATW-destackification} and used in the first logarithmic principalization algorithm in \cite{ATW-principalization}. This construction followed the logic of Remark~\ref{solrem}(1) -- one considers an appropriate Galois Kummer covering on which the center becomes a usual ideal, blows it up, and then divides by the Galois group. The subtle point is that in order to do this independently of the covering the quotient should be a mix of a stack-theoretic and schematic quotients (a partial coarsening of the stack-theoretic quotient). A simpler and more general definition that uses stack-theoretic Proj construction was suggested by Rydh soon after \cite{ATW-principalization} was written, and this is the definition we will work with below. It was used in \cite{ATW-relative} and it can be generalized to arbitrary weighted blowings up easily. Since this definition was studied in chapter \cite{stacksnotes}, we just quickly recall the main points.

\subsubsection{The stacky Proj construction}
Let $X$ be a scheme and let $\cA=\oplus_{d=0}^\infty\cA_d$ be a finitely generated graded $\cO_X$-algebra. Then $Y=\Spec_X(\cA)\to X$ is a $\Gm$-equivariant morphism with the action on $X$ being trivial. The vanishing locus $V=V(\cA_{\ge 1})$ of the irrelevant ideal $\cA_{\ge 1}=\oplus_{d=1}^\infty\cA_d$ is the set of fixed points. In particular, the action on $Y\setminus V$ has stabilizers of the form $\mu_d$ and the usual scheme-theoretic quotient exists (there is no need to use GIT). The usual Proj construction associates to $\cA$ the scheme-theoretic quotient $\Proj_X(\cA)=(Y\setminus V)/\Gm$ and forgets the information about the stabilizers. In the realm of stacks it is natural to consider the finer {\em stacky Proj construction} which outputs the stack-theoretic quotient $\cProj_X(\cA)=[(Y\setminus V)/\Gm]$. By definition, $\Proj_X(\cA)$ is the coarse moduli space of $\cProj_X(\cA)$.

\subsubsection{Charts and stabilizers}
The stacky Proj construction is  local on the base, and it is glued from charts which refine the charts of the usual Proj stack-theoretically. Details are in chapter \cite{stacksnotes}, here we only recall them briefly.

\begin{exer}\label{kummerchartsex}
Assume that $X=\Spec(B)$ and $A=\oplus_{d=0}^\infty A_d$ is a finitely generated graded $B$-algebra, and let $\cZ=\cProj_B(A)=[Y/\Gm]$, where $Y=\Spec(A)\setminus V$.

(o) For any homogeneous $s\in A_d$ the $s$-chart $\cZ_s=[Y_s/\Gm]$ is an open substack of $\cZ$ and $\cZ=\cup_{i=1}^n\cZ_{s_i}$ for homogeneous elements $s_1\..s_n\in A$ if and only if the radical of $(s_1\..s_n)$ coincides with the irrelevant ideal $A_{\ge 1}$.

(i) Consider the $\bbZ/d\bbZ$-graded ring $A_s/(s-1)$, whose $i$-th homogeneous component is isomorphic to $(A_s)_{i+jd}$ with any $j\in\bbZ$. Then $\cZ_s=[\Spec(A_s/(s-1))/\mu_d]$, in particular, its inertia is bounded by $\mu_d$. In addition, we see that the usual and the stacky Proj constructions coincide in the classical case when $\cA$ is generated in degree 1.
\end{exer}

\begin{exam}
(i) If $\cJ$ is an ideal on $X$ and $\cA=\oplus_{d=0}^\infty\cJ^d$, then $X'=\cProj_X(\cA)=\Proj_X(\cA)=Bl_\cJ(X)$ is the usual blowing up of $X$ along $\cJ$. Blowing up $\cJ^n$ we obtain the same morphism $X'\to X$, but with the exceptional divisor multiplied by $n$.

(ii) Using the stacky Proj we can also blow up an analogue of $\cJ^{1/n}$. Namely, consider $\cA=\oplus_{d=0}^\infty\cA_d$ with $\cA_{d}=\cJ^{\lceil d/n\rceil}$ where the brackets denote the upper rounding to an integer. Then $\cX'=\cProj_X(\cA)$ is the root stack obtained from $X'$ by extracting $n$-root from the exceptional divisor $\cJ\cO_{X'}$. This root ideal should be viewed as the exceptional divisor of the blowing up along $\cJ^{1/n}$ and, indeed, $\cX'\to X$ is the universal morphism such that the pullback of $\cJ$ is the $n$-th power of an invertible ideal, though this condition should be stated carefully, see \cite[Theorem~3.2.11 and Corollary~3.2.12]{Quek-Rydh}.
\end{exam}

\subsubsection{Extension to stacks}
An important feature of the stacky Proj construction is that it extends to stacks. The usual Proj construction extends by standard \'etale descent, but dealing with the $\Gm$-quotient requires an additional argument, see chapter \cite{stacksnotes}.

\subsubsection{Kummer blowings up}\label{kumblowsec}
Given a log DM stack $X$, let $\pi\:X_\ket\to X$ denote the restriction of sites. Thus, given a Kummer ideal $\cJ$ on $X$ by $\pi_*\cJ$ we denote its restriction on $X$. For example, if $X=\Spec(A)$ is affine, then $\pi_*\cJ$ is just the ideal $\cJ(X)$ of $A$. Now, we define Kummer blowing up by $X'=\cProj(\cR_\cJ)^\nor$, where $\cR_\cJ=\oplus_{d=0}^\infty\pi_*(\cJ^d)$ is the Rees algebra of $\cJ$. Clearly, when $\cJ$ is a usual ideal, this reduces to the usual definition of a normalized blowing up. The Kummer ideal $\cI_E=\cJ\cO_{X'_\ket}$ is an invertible (usual) ideal because, as in the classical case, its restriction onto the $s$-chart $X'_s$, where $s\in\cJ^d(X)$, is the invertible idea $(s)$. In particular, the exceptional divisor $E$ is defined as usual, and if the blowing up is {\em $\cI$-admissible}, that is, $\cI\subseteq\cJ$, then the transform $\cI=(\cI\cO_X)\cI_E^{-1}$ is defined.

Finally, as in \S\ref{admiseqsec} we provide $X'$ with the log structure $\cM_{X'}=\cM(\log(D'))$ induced by the divisor $D'=f^{-1}(D)\cup E$, where $f\:X'\to X$ is the blowing up and $D$ defines the log structure on $X$ (the construction of the log structure $\cM(\log(D'))$ from a divisor is compatible with \'etale morphisms and hence extends to DM stacks). The log DM stack $(\uX',\cM_{X'})$ is called the {\em blowing up} of $X$ along the Kummer ideal $\cJ$ and denoted $Bl_\cJ(X)$.

\subsubsection{Admissible blowings up}
We will only use the above construction when $\cJ$ is an admissible center. The main properties of this operation are that the source is log smooth and the blowing up is compatible with log smooth morphisms:

\begin{theor}\label{Kummeblowth}
Let $X$ be a log orbifold, let $\cJ$ be an admissible Kummer center on $X$ and let $X'=Bl_\cJ(X)$.

(i) The blowing up $X'\to X$ is a proper birational morphism with finite diagonalizable relative inertia, which is an isomorphism over the complement of $V(\cJ)$.

(ii) $X'$ is a log orbifold.

(iii) Log smooth functoriality: if $Y\to X$ is a log smooth morphism, then $Bl_{\cJ\cO_{Y_\ket}}(Y)=X'\times_XY$, where the product is taken in the fs category.

(iv) \'Etale descent: if $X\to Y$ is a strict \'etale cover and the pullbacks of $\cJ$ to $X\times_YX$ coincide, then $\cJ$ is the pullback of an admissible Kummer center $\cI$ on $Y$ and $X'\to X$ is the saturated pullback of the admissible blowing up $Bl_\cI(Y)\to Y$.
\end{theor}

Part (i) of the theorem holds for any Kummer blowing up. Parts (ii) and (iii) are proved similarly to Exercise~\ref{admblowex} -- using \'etale descent one reduces the claim to the case of a model scheme and admissible center and then performs an explicit (essentially toric) computation. In view of part (iii), it suffices to descend the center in part (iv). The claims is \'etale local on $Y$ hence we can assume that it possesses a chart $Y\to\bfA_P$. Then $X_P[\frac{1}{d}P]=Y_P[\frac{1}{d}P]\times_YX$ and taking $d$ large enough we achieve that $\cJ$ is an ideal on $X_P[\frac{1}{d}P]$. By \'etale descent it is the pullback of an ideal on $Y_P[\frac{1}{d}P]$, which is easily seen to be an admissible center, and therefore it gives rise to an admissible Kummer center on $Y$.

Now, let us illustrate how the functoriality from (iii) works in a concrete case, how the stacky structure disappears for Kummer covers and how this is related to saturated pullbacks. In fact, all this is seen very well already on the log manifolds and ideals from Example~\ref{lacunaex}.

\begin{exam}
(i) $Y=\Spec(k[t,v])$ with the log structure generated by $v$ and $\cI=(t,v)$. The $v$-chart $Y'_v$ of the blowing up $Y'=Bl_\cI(Y)$ is computed as usual: $Y'_v=\Spec(k[v,t'])$ with $t'=t/v$ and the log structure generated by $v$.

(ii) $X=\Spec(k[t,u])$ with the log structure generated by $u$ and $\cJ=(t,u^{1/2})$. By the chart description in Exercise~\ref{kummerchartsex} the $u^{1/2}$-chart $X'_{u^{1/2}}$ of $X'=Bl_\cJ(X)$ is described as follows. The Rees algebra is the graded subalgebra $\cR_\cJ=k[t,u,tz,uz,uz^2]$ of the graded algebra $\oplus_{d=0}^\infty k[t,u]z^d$ and the $u^{1/2}$-chart corresponds to the $uz^2$-chart. So, $X'_{u^{1/2}}$ is the $\mu_2$-quotient of $W=\Spec(\cR_\cJ/(uz^2-1))$, where $$\cR_\cJ/(uz^2-1)=k[t,u,x=tz,y=uz]/(y^2-u,xy-t)=k[x,y].$$ We see that the new coordinates are $y=u^{1/2}$ and $x=t/y$, quite analogously to coordinates of admissible blowings up, and $\mu_2$ acts on both $x$ and $y$ by negation because they are of grading 1. In particular, the coarse quotient $W/\mu_2=\Spec(k[y^2,xy,y^2])$ with the log structure $\bbN\log(y^2)$ is not log smooth, while the stack theoretic refinement $X'_{u^{1/2}}=[W/\mu_2]$ is log smooth (and even smooth).

(iii) Define a morphism $Y\to X$ by $v^2=u$. Then $\cI=\cJ\cO_Y$ and indeed $Y'$ is easily seen to be the saturation of $X'\times_XY$. At first glance it looks paradoxical that $X'\to X$ is a non-representable morphism but its base change $Y'\to Y$ is representable, as this could not happen for \'etale covers, but here $Y\to X$ is log \'etale and the base change is saturated. In fact, in this case the non-saturated base change $X'\times_XY$ has a non-trivial stacky structure (the root stack associated to the square root of $(v^2)$), but its saturation $Y'$ is a scheme.
\end{exam}

\subsection{The algorithm}\label{42}

\subsubsection{Results of \S\S\ref{logdersec}--\ref{ordredsec}}
Theorem~\ref{Kummeblowth} accomplishes the construction of the logarithmic principalization framework. Once this is done, almost all results and proofs of \S\ref{firstsec} extend almost verbatim because local proofs are \'etale-local. In particular, absolutely in the same way one defines $\cI$-admissible sequences (of Kummer blowings up), principalization, log order and log order $d$-reduction, maximal contact and coefficient ideal and establishes analogues of Theorems~\ref{equivth} and \ref{homogth}, Lemmas~\ref{hullem}, \ref{finorderlem}, \ref{logordlem}, etc.

\subsubsection{Log order reduction}
The following definitions are the same as in the classical case.

\begin{defin}\label{ordereddef}
(i) A {\em log order $d$-reduction} of an ideal $\cI$ on a log orbifold $X$ is an $\cI$-admissible $d$-sequence $f_\bullet\:X_n\dashto X_0=X$ such that $\logord_{X_n}(\cI_n)<d$. A log order reduction $\cF$ on the category of log orbifolds over $k$ associates to any $X,\cI,d$ as above a log order reduction $f_\bullet=\cF(\cI,d)$.

(ii) A log order reduction $\cF$ on the category of log orbifolds over $k$ is {\em log smooth functorial} if for any $X,\cI,d$ as above and a log smooth morphism $Y\to X$ the log order $d$-reduction of $\cI\cO_Y$ is the contracted pullback of the log order $d$-reduction of $\cI$, that is, $\cF(\cI\cO_Y,d)$ is obtained from $\cF(\cI,d)\times_XY$ by omitting all trivial blowings up.

(iii) $\cF$ satisfies the {\em re-embedding principle} if for any closed immersion of log orbifolds $Y\into X$ of constant codimension and an ideal $\cI$ on $X$ such that $I_Y\subseteq\cI$ the sequence $\cF(\cI,1)$ is obtained by pushing forward $\cF(\cI\cO_Y,1)$ (that is, one blows up the same Kummer centers on the strict transforms of $Y$).
\end{defin}

Now we can formulate the log order reduction theorem. It is the main result of the theory, which implies the log principalization and log desingularization theorems.

\begin{theor}\label{logorderredth}
On the category of log orbifolds over $k$ there exists a log order reduction method $\cF$ which is log smooth functorial and satisfies the re-embedding principle.
\end{theor}

The {\em log principalization} is defined similarly, but it depends only on $X$ and $\cI$ (but not on $d$), one is allowed to use arbitrary $\cI$-admissible centers and obtains $\logord_{X_n}(\cI_n)=0$ in the end (that is, $\cI_n=\cO_{X_n}$). The log smooth functoriality and the re-mebedding principle are literally the same. The log principalization theorem is an immediate corollary of Theorem~\ref{logorderredth} obtained by taking $d=1$.

\begin{theor}\label{logprincth}
On the category of log orbifolds over $k$ there exists a logarithmic principalization method $\cP$, which is log smooth functorial and satisfies the re-embedding principle.
\end{theor}

Finally, the arguments from \S\ref{reductiontologprinc} imply the following logarithmic desingularization result.

\begin{theor}\label{legresth}
There is a logarithmic desingularization method $\cR$ which to any generically smooth and locally equidimensional log DM stack $X$ of finite type over $k$ associates a birational morphism $\cR(X)\:X_\res\to X$ with a log smooth $X_\res$, and for any log smooth morphism $Y\to X$ one has that $Y_\res=X_\res\times_XY$ in the fs category.
\end{theor}

\begin{rem}
(i) The morphism $X_\res\to X$ almost always is not representable even when $X$ is a variety, in particular, it is not projective. However, it belongs to the class of morphisms which is a natural non-representable extension of the class of projective morphisms -- the class of global quotients of projective morphisms. In particular, the relative coarse space $(X_\res)_{\cs/X}$ is projective over $X$. However, $(X_\res)_{\cs/X}$ does not have to be log smooth and can have quotient singularities.

(ii) One can canonically resolve $(X_\res)_{\cs/X}$, and even do this on the level of stacks by the so-called destackification procedure. Namely, there is a smooth functorial way to construct a further modification $X'\to X_\res$ such that the relative coarse space $X'_{\cs/X}$ is log smooth and the morphism $X'_{\cs/X}\to X$ is projective, see \cite{stacksnotes}. This step cannot be done log smooth functorially.
\end{rem}

\subsubsection{The log order reduction algorithm}\label{algsec}
Our proof of Theorem~\ref{logorderredth} consists of constructing a required algorithm by induction on dimension and checking that it satisfies all required properties. So, assume that the algorithm is already constructed when $\dim(X)\le n-1$ and let us construct it when $\dim(X)=n$. The input data consists of an ideal $\cI$ and a natural number $d\ge 1$. The output will be an $\cI$-admissible $d$-sequence $$X'\to X_n\dashto X_{n-1}\dashto\ldots\dashto X_1\to X,$$ where the first and the last arrows are single monomial blowings up, and the arrows $X_{i+1}\dashto X_i$ encode a whole $\cI_i$-admissible $d$-sequence of non-monomial blowings up and the following conditions hold: the final transform satisfies $\logord_{X'}(\cI')<d$, each intermediate transform $\cI_i\subseteq\cO_{X_i}$ splits as $\cI_i=\cI_i^\cln\cN_i$, where $\cN_i$ is invertible monomial and $d_i=\logord_{X_i}(\cI_i^\cln)$ is finite, and one has that $d_1>d_2>\dots>d_{n-1}\ge d>d_n$.

{\em The initial cleaning step.} This step makes the log order finite and consists of the single blowing up along the $d$-center $\cJ^{(d)}$, where $\cJ=\cM(\cI)^{1/d}$. Clearly, $\cJ^{(d)}$ is the integral closure of $\cM(\cI)$, hence the blowing up has the same effect and the transform $\cI_1$ is of finite log order by Lemma~\ref{finorderlem}. In particular, $\cN_1$ is trivial.

{\em The regular step.} This step accepts $\cI_i=\cI_i^{\cln}\cN_i$ with $d_i\ge d$ as an input and outputs a $d$-sequence $f_\bullet^d\:X_{i+1}=Y_m\dashto Y_0=X_i$, which is associated in the sense of Lemma~\ref{balancedlem} to the $d_i$-sequence $f_\bullet^{d_i}\:Y_m\dashto Y_0=X_i$ which reduces the log order of $\cI^\cln_i$. By Lemma~\ref{balancedlem} the sequence outputs $\cI_{i+1}=\cI_{i+1}^\cln\cN_{i+1}$ with $d_{i+1}<d_i$.

The $d_i$-sequence $f_\bullet^{d_i}$ is constructed as follows. Working \'etale locally we can assume that there exists a maximal contact $H$ to $\cI$ and by induction assumption the ideal $\cC(\cI)|_H$ possesses a log order $d_i!$-reduction $g_\bullet\:H_m\dashto H_0=H$ with centers $\ocJ_j^{(d!)}$. We define $f_\bullet^{d_i}$ to be the sequence with centers $\cJ_j^{(d)}$, where $\cJ_j$ is the preimage of $\ocJ_j$ under the surjection $\cO_{Y_j}\onto\cO_{H_j}$. Thus, if locally on $Y_j$ we have that $H_j=V(t)$ and the center of $H_{j+1}\to H_j$ is $(t_1\..t_r,u^{p_1}\..u^{p_s})^{(d_j!)}$, then the center of $Y_{j+1}\to Y_j$ is $\cJ_j=(t,t_1\..t_r,u^{p_1}\..u^{p_s})^{(d_j)}$ and the corresponding center of $f^d_\bullet$ is $(t,t_1\..t_r,u^{p_1}\..u^{p_s})^{(d)}$.

By Theorem~\ref{equivth} the sequence $f_\bullet^{d}$ is indeed a log order $d$-reduction of $\cI_i$, and by Theorem~\ref{homogth} it is independent of choices and descends from the \'etale local construction.

{\em The final cleaning step.} This is trivial -- we just repeat the cleaning step, but this time $\cM(\cI_n)=\cN_n$, so we just blow up the $d$-center $\cN_n=(\cN_n^{1/d})^{(d)}$. Thus, $X'=X_n$ and $\cI_{n+1}=\cI_n^\cln$ is as required.

\subsubsection{Justification of the algorithm}
It is clear from the construction that the obtained sequence $X'\to X$ is a log order $d$-reduction for $\cI$ -- it first makes the log order finite, then reduces the log order of the clean part below $d$, and then removes the invertible monomial part. Compatibility of this algorithm with log smooth morphisms $Y\to X$ follows from the fact that all basic ingredients of our construction are log smooth functorial, as was stated in Corollaries~\ref{hullcor} and \ref{logordcor}, Lemma~\ref{derfunct}(ii), etc. Finally, it suffices to check the re-embedding principle \'etale-locally, so we can assume that $X\into Y$ is a closed immersion of log orbifolds of pure codimension one and $\cI'$ is the preimage of $\cI$ in $\cO_Y$. Then $\logord_Y(\cI')=1$ and $X$ is a maximal contact to $\cI'$, so by the regular step of the algorithm $\cF(\cI',1)$ is the pushforward of $\cF(\cI,1)$, which is precisely what is claimed by the re-embedding principle.

\begin{rem}
We already noted that the log principalization is obtained by applying the log order reduction with $d=1$. Similarly to the proof of log order reduction, one can principalize just a bit faster by iteratively applying maximal order reductions. This might look more natural as there is no need to do the final cleaning step, but the drawback is that such an algorithm does not satisfy the re-embedding principle.
\end{rem}

\subsubsection{The invariant}
We constructed the log order reduction $\cF(\cI,d)$ as a composition of blowing up sequences $X_{i+1}\dashto X_i$, which can be parameterized by the strictly decreasing sequence $\logord(\cI_i^\cln)$. In particular, the first stage is numbered by $\infty$, the last stage -- by a number which is strictly smaller than $d$ and will be replaced by 0 for convenience, and all regular steps are numbered by a finite number $d_i\ge d$. The first and last steps are single monomial blowings up, and each regular step corresponds to the log order $d_i!$-reduction of $\cC(\cI_i^\cln)|_{H_i}$. We define the invariant of a separate blowing up of $\cF(\cI,d)$ by induction on $\dim(X)$ as follows: the blowing up of the first step (if non-trivial) has invariant $(\infty)$, the blowing up of the last step (if non-trivial) has invariant $(0)$, the blowings up of the $i$-th regular sequence have invariant $(d_i,\inv_{H_i})$, where $\inv_{H_i}$ denotes the invariant of the corresponding log order $d_i!$-reduction sequence of $\cC(\cI_i^\cln)|_{H_i}$.

Each non-normalized invariant is of the form $(d_0,d_1\..d_n,*)$, where $d_0\ge d$, $d_{i+1}\ge d_i!$ and $*\in\{\infty, 0\}$, and by induction on the length of the invariant and the fact that $\logord_{X_i}(\cI^\cln_i)$ strictly decreases, we obtain that each separate blowing up in the sequence indeed decreases the value of the invariant. Hence the same is also true for the normalized invariant $(q_0,q_1\..q_n,*)$, where $q_i=d_i/\prod_{j<i}(d_j-1)!$.

\begin{rem}\label{invrem}
(i) The algorithm consists of nested loops: the outer loop reduces the value of the logorder $d_0$, the next loop reduces the order $d_1$ of the coefficient ideal (and its transforms) on the first maximal contact $H_1$ (and its transforms), etc. The invariant just parameterizes our location in this sequence of loops.

(ii) The non-normalized invariant of the first blowing up is always of the form $(d_0\..d_n,d_{n+1}=\infty)$, where $d_i$ is the log order of the $i$-iterated coefficient ideal restricted to the $i$-th iterated maximal contact.
\end{rem}

\subsubsection{Comparison with the classical algorithm}
The classical principalization algorithm is more complicated and slow, but uses simpler basic operations. Here are main differences between the algorithms:

0) The combinatorial step of the classical algorithm is rather heavy and uses the order of exceptional components. In our case its analogue is the single blowing up at the final cleaning step.

1) The classical algorithm has to separate the boundary each time it passes to a maximal contact. This happens because the maximal contact may have a bad intersection with the boundary. This separation is done by a separate loop on the number of boundary components, so the actual normalized invariant of the regular steps of the algorithm is of the form $(q_0,s_0;q_1,s_1;\dots)$. Instead of this the logarithmic algorithm chooses maximal contact adopted to the log structure.

2) The classical algorithm uses both regular and exceptional parameters to compute the order. Because of this the clean part can be already resolved, while the order of the whole $\cI$ is still larger than $d$. This forces one to work with the companion ideal rather than just the clean part. In the logarithmic algorithm, one can safely ignore the monomial factor.

$\infty$) The classical algorithm never starts with the initial cleaning step and its invariant is never $(\infty)$ (unless $\cI=0$), but an analogue of this step is applied when the restriction of $\cC(\cI)$ onto the maximal contact $H$ vanishes and hence its order is infinite. In the classical algorithm, this situation is also encoded by the infinite degree finishing the invariant string. This situation occurs if and only if $\cI=(I_H)^d$ and then the single blowing up of $H$ principalizes $\cI$.

\subsection{Other logarithmic frameworks}
In brief, the logarithmic resolution theory works in any context of logarithmic spaces with large enough sheaves of derivations -- sufficient to distinguish regular and logarithmic parameters, and the obtained methods are functorial with respect to log regular morphisms. This includes various categories of analytic spaces with log structures, and schemes or formal schemes with enough log derivations.

\section{Resolution of morphisms}\label{morsec}
The goal of this section is to outline the results of \cite{ATW-relative}, in which logarithmic desingularization methods were generalized to the relative setting and a functorial resolution of morphisms between log schemes was obtained. As earlier, an important part of this work is to set up the framework. In two words, one studies principalization on relative log orbifolds $f\:X\to B$, with a log regular $B$, and one uses the sheaf of relative log derivations $\cD_{X/B}=\Der_{\calO_B}(\calO_X,\calO_X)$ to define the relative log order, maximal contacts and coefficient ideals. Using this language one generalizes the principalization algorithm straightforwardly. So, our exposition will be very fast and we will often just mention which argument from \S3--4 should be adjusted in the relative case.

The only really new feature of the relative situation is that base changes should be incorporated into the algorithm. On the one hand, the algorithm is functorial with respect to any base changes, so it is indeed an honest relative algorithm. On the other hand, the algorithm can fail over a given $B$, and it succeeds only after a large enough base change $B'\to B$. The proof of this is based on a new tool -- the monomialization theorem, see \cite[Theorem~3.6.13]{ATW-relative}. As earlier, for simplicity we will consider the case when $f$ is of finite type, and only in the end make a few comments about more general settings.

\subsection{Framework}
Constructing the relative framework occupies Sections 2 and 4 in \cite{ATW-relative} and is rather heavy. To a large extent this is caused by considering log regular morphisms not necessarily of finite type. In such a case one has to require of the sheaf $\cD_{X/B}$ to be suitably ``large". In our case of DM stacks of finite type over a field, the morphisms are automatically log smooth and the sheaf $\cD_{X/B}$ is locally free of the expected rank, so the situation simplifies. Still, we prefer to briefly mention all needed ingredients and just provide references to \cite{ATW-relative}.

\subsubsection{Geometric objects}
Naturally, a basic object this time is a morphism $f\:X\to B$ of fs log DM stacks of finite type over $k$. In principle, we aim to construct a purely relative algorithm, which should apply to any target, but for technical reasons, we have to restrict to the case when $B$ is log smooth, see \cite[Remarks~1.2.9 and 3.1.11]{ATW-relative}. Since in any case the algorithms succeed only after a large enough base change, essentially this just restricts us to the case of a generically log smooth target, see also \cite[Remark~1.2.9]{ATW-relative}. For simplicity of exposition, we assume that $B$ is a log manifold (i.e. it is also a scheme), though using \'etale descent on the base one can easily extend everything to the case when $B$ is a stack.

There are two classes of morphisms between morphisms we will consider: 1) just $B$-morphisms (the same target), 2) base change morphisms (or diagrams) $f'\to f$, where $f'\:X'=X\times_BB'\to B'$ and the product is taken in the fs category. In case (2) we will say that $X'\to X$ is a {\em base change morphism}.

We say that $f$ is a {\em relative orbifold} if it is log smooth. If, in addition, $f$ is representable we call it a {\em relative manifold}. As usual, resolution of an arbitrary morphism $Z\to B$ will be obtained by embedding it \'etale-locally into a relative manifold $f\:X\to B$ and principalizing $I_Z$ on the relative $B$-manifold $X$. We will use the sheaf of relative log derivations $\cD_{X/B}=\Der_{\calO_B}(\calO_X,\calO_X)$.

\subsubsection{Sharp morphisms}\label{sharpsec}
Some arguments in \cite{ATW-relative} apply only to log smooth morphisms of a special form that we will introduce now. We say that a morphism $f\:X\to B$ is {\em sharp} at a point $x\to X$ if for a geometric point $\ox$ over $x$ and $\ob=f(\ox)$ the homomorphism $\oM_\ox\to\oM_\ob$ is injective. A typical example of a non-sharp log smooth morphism is a log blowing up, see \cite[Example~4.1.5]{lognotes}. So, the following fact is not so surprising.

\begin{exer}\label{sharpex}
Let $f\:X\to B$ be a morphism of log varieties over $k$ with a log smooth target. Show that there exists a log blowing up $B'\to B$ such that the base change $f'\:X'\to B'$ is a sharp morphism. (Hint: first, solve such a problem for a chart $\bfA_Q\to\bfA_P$ of $f$ with an injective $P\into Q$ (note that the monoids do not have to be sharp). Then use quasi-compactness of $X$ and the facts that log blowings up form a filtered family and, in case of log smooth varieties, can be extended from an open subvariety because the closure of a monomial subscheme is monomial.)
\end{exer}

\begin{rem}\label{sharprem}
In fact, changing the base via a log blow up one can achieve much more: one can achieve that $f$ is {\em integral} in the sense that for each $\ox\to X$ with $\ob=f(\ox)$, $P=\oM_\ob$ and $Q=\oM_\ox$ the homomorphism $P\to Q$ is {\em integral}. The latter notions have a few equivalent formulations for which we refer to \cite[Proposition~4.1]{Kato-log}. In particular, it implies that $f$ is flat (assuming $f$ is log smooth, see \cite[4.5]{Kato-log}) and set theoretically $Q$ splits as $P\times Q/P^\gp$, where $Q/P^\gp$ denotes the image of $Q$ in $Q^\gp/P^\gp$. Existence of such a log blowing up is a combinatorial analogue of the flattening theorem of Raynaud-Gruson, though its proof is rather elementary, see \cite[Proposition~3.6.11(i)]{ATW-relative} and references there. We will not use the integralization theorem in these notes.
\end{rem}

\subsubsection{Parameters}
Recall that we have defined in \cite[\S5.2.10]{lognotes} the notion of log fibers of a morphism $f\:X\to B$. The general definition uses the stacks $\Log_B$, but for a sharp $f$ the log fibers are nothing else but the log strata of the fibers of $f$, see \cite[Exercise~5.2.11]{lognotes}.

Let $f\:X\to B$ be a log smooth morphism of log varieties, and assume that the log structures are Zariski at $x\in X$ and $b=f(x)$. By a family of {\em relative parameters} of $f$ at $x$ we mean a family $(t,q)$, where $t=(t_1\..t_n)\in\cO_{X,x}$ maps to a regular family of parameters at $x$ of the log fiber (which is smooth by the log smoothness assumption), and $q_1\..q_r$ map to a basis of $\Coker(P^\gp\to Q^\gp)\otimes\bbQ$. We call $t_i$ and $q_j$ the {\em regular} and the {\em monomial parameters}, respectively.

\subsubsection{Formal description}
Sometimes sharp log smooth morphisms are more convenient to work with, in particular, because they possess a simple formal description.

\begin{lem}\label{formallem}
Assume that $f\:X\to B$ is a log smooth morphism of log varieties, $f$ is sharp at a point $x\in X$, and the log structures of $X$ and $B$ are Zariski at $x$ and $b=f(x)$. Fix compatible monomial charts $P=\oM_b\to\cO_b$ and $Q=\oM_x\to\cO_x$, fix fields of definitions $k(x)\into \hatcO_x$ and $k(b)\into\hatcO_b$, and fix a family $t_1\..t_n\in\cO_x$ of regular parameters. Then $(\hatcO_b)_P\llbracket Q\rrbracket\llbracket t_1\..t_n\rrbracket\widehat\otimes_{k(b)}k(x)=\hatcO_x$.
\end{lem}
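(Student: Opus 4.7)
The plan is to reduce to an explicit toric model via a strict étale chart and then compute the completed local ring of the model directly, in the spirit of the absolute computation recorded in the exercise of \S3.

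First, I would apply Kato's structure theorem for log smooth morphisms in the sharp setting. Since $f$ is log smooth and sharp at $x$, since the $t_i$ descend to a regular family of parameters on the (smooth) log fibre of $f$ at $x$, and since the classes of $q_1\..q_r$ span $\Coker(P^\gp\to Q^\gp)\otimes\bbQ$, there exists a strict étale morphism
\[
\phi\:U\longrightarrow Y:=B\times^{\mathrm{fs}}_{\bfA_P}\bfA_Q\times_k\bfA^n_k
\]
from an étale neighbourhood $U$ of $x$, sending $x$ to the point $y\in Y$ where all $q_j$ and $t_i$ vanish. Sharpness of $P\into Q$ makes the vertex of $\bfA_Q$ over $b$ have residue field $k(b)$, so $k(y)=k(b)$. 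Once the fixed coefficient fields $k(b)\into\hatcO_b$ and $k(x)\into\hatcO_x$ are transported through $\phi$, the standard identification of completed local rings under strict étale morphisms in characteristic zero gives
\[
\hatcO_{X,x}\;\cong\;\hatcO_{Y,y}\,\widehat\otimes_{k(b)}k(x),
\]
and it remains to identify $\hatcO_{Y,y}$ with $(\hatcO_b)_P\llbracket Q\rrbracket\llbracket t_1\..t_n\rrbracket$. Unravelling the fs fibre product at $y$, which lies over $b\in B$, the vertex of $\bfA_Q$, and the origin of $\bfA^n_k$, one obtains
\[
\hatcO_{Y,y}\;=\;\hatcO_b\,\widehat\otimes_{\bbZ[P]}\,\bbZ\llbracket Q\rrbracket\,\llbracket t_1\..t_n\rrbracket,
\]
which is $(\hatcO_b)_P\llbracket Q\rrbracket\llbracket t_1\..t_n\rrbracket$ by definition; the polynomial algebra becomes a power-series algebra because all elements of $Q\setminus P^*$ contribute to the maximal ideal at $y$.

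The main obstacle I anticipate is the first step -- producing the strict étale chart $\phi$ in the sharp setting. This combines Kato's relative chart theorem, which furnishes a log smooth morphism through $B\times^{\mathrm{fs}}_{\bfA_P}\bfA_Q$, with the standard smooth-to-étale upgrade once a regular family of parameters on the fibre has been fixed; sharpness of $P\into Q$ is what decouples the monoidal and regular directions cleanly and ensures $k(y)=k(b)$. A secondary technicality is that the fs fibre product agrees with the ordinary one upon completion, which again reduces to sharpness: both $Q$ and $Q/P^*$ are already sharp toric monoids, so no further saturation beyond $\bbZ\llbracket Q\rrbracket$ is required.
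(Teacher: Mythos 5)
Your argument is correct in outline, but it takes a genuinely different route from the paper. The paper does not pass through a toric model at all: it writes down the natural homomorphism $(\hatcO_b)_P\llbracket Q\rrbracket\llbracket t_1\..t_n\rrbracket\widehat\otimes_{k(b)}k(x)\to\hatcO_x$ determined by the fixed charts, coefficient fields and parameters, observes that it is surjective because it is surjective on residue fields and cotangent spaces (and both sides are complete local rings), and gets injectivity by comparing relative dimensions (following \cite[Lemma~2.3.14]{ATW-relative}). Your route instead invokes the neat-chart refinement of Kato's structure theorem to produce a strict \'etale morphism $U\to B\times^{\rm fs}_{\bfA_P}\bfA_Q\times_k\bfA^n_k$ and then computes the completion of the model; this is where sharpness genuinely enters (it gives $P^\gp\into Q^\gp$ injective, i.e.\ the given chart is neat, so the induced morphism to $B\times_{\bfA_P}\bfA_Q$ is smooth and the $t_i$ upgrade it to \'etale), and that step is a standard citable fact, so the obstacle you flag is not a real one. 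What each approach buys: the paper's map-plus-dimension-count argument is shorter, needs no chart theorem, and adapts verbatim to the other ``formal description'' statements in \cite{ATW-relative}; your model argument is more explicit and makes the ring structure of $\hatcO_x$ visible, at the price of the chart machinery and some bookkeeping with residue fields, coefficient fields and fs products. Two small corrections to that bookkeeping: $k(y)=k(b)$ at the vertex has nothing to do with sharpness (all monomials of $Q^+$ and all $t_i$ vanish there, so the vertex is a $k(b)$-point in any case), and the agreement of the fs fibre product with the ordinary one is due to strictness of $B\to\bfA_P$ (the chart $P=\oM_b$ is neat at $b$, so the pushout chart of the naive product is just $Q$, already fs), not to sharpness of $Q$ or $Q/P$. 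Also note that the transported isomorphism $\hatcO_{X,x}\cong\hatcO_{Y,y}\widehat\otimes_{k(b)}k(x)$ uses that $k(x)/k(y)$ is finite separable (automatic from \'etaleness in characteristic zero), and one should make sure the coefficient fields are chosen compatibly, i.e.\ $k(b)\into k(x)$ inside $\hatcO_x$, so that the map you produce is the natural one asserted in the statement.
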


As in many similar statements, the natural homomorphism induced by the choices of parameters and fields of definition is easily seen to be surjective (it induces surjective homomorphisms of residue fields and cotangent spaces), and the injectivity follows by comparing relative dimensions. See the proof of \cite[Lemma~2.3.14]{ATW-relative} for details.

\subsubsection{Suborbifolds}
A {\em relative suborbifold} of $f\:X\to B$ is a strict closed substack $Y\into X$ which is log smooth over $B$. As in the absolute case, \'etale locally suborbifolds are given by vanishing of regular parameters. The argument is essentially the same with log fibers used instead of log stratas. The following exercise is an advertisement of stacks $\Log_B$. For sharp morphisms one can solve it straightforwardly using only the ``low tech'' of log schemes and charts, but the general solution which uses the ``high tech'' of stacks $\Log_B$ is simpler.

\begin{exer}
Show that if $f\:X\to B$ is a log smooth morphism of log smooth varieties and $Y\into X$ is a relative $B$-submanifold, then for any point $y\in $Y there exists regular relative parameters $t_1\..t_n\in\cO_{X,y}$ of $X/B$ such that $Y=V(t_1\..t_r)$ locally at $y$. (Hint: use that $X$ and $Y$ are log smooth over $\Log_B$ and their log fibers over $B$ are the fibers over $\Log_B$.)
\end{exer}


\subsubsection{Kummer centers and blowings up}
A {\em relative $d$-center} is an ideal of the form $\cJ=(\cI+\cN)^{(d)}$, where $\cI$ is a {\em suborbifold ideal} (i.e. $Y=V(\cI)$ is a relative suborbifold of $X$) and $\cN$ is a monomial Kummer ideal. Since $B$ is a log manifold, $X$ is a log orbifold over $k$ and $Y$ is its log suborbifold. In particular, $\cJ$ is a usual $d$-center in the sense of \S\ref{centerssec} and the Kummer blowing up along the relative center $\cJ$ to be the Kummer blowing up along $\cJ$ as defined in \S\ref{kumblowsec}. This trick saves us some work in the relative case, but one still has to use that the center is in a special position with respect to $B$ in order to prove the following

\begin{lem}
Assume that $X\to B$ is a relative orbifold with a log smooth $B$, and $\cJ$ is a relative $d$-center on $X$. Then $X'=\Bl_\cJ(X)\to B$ is a log orbifold too.
\end{lem}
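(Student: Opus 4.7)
The plan is to mimic the strategy of the absolute Lemma~\ref{admblowlem}: reduce Kummer-\'etale locally on $X$ to a toric model over a toric base, and then check log smoothness over $B$ chart by chart in the Kummer blowing up, tracking how the newly introduced generators interact with the base monoid $P$.

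First I set up the local model. Working \'etale locally on $B$, I may assume $B=\bfA_P$ for a toric monoid $P$. Since the normalized blowings up along $\cJ$ and $\cJ^{(d)}$ coincide, I may take $d=1$. Working then \'etale locally on $X$ and using a chart of the log smooth morphism $X\to B$, I reduce to the model case where $X$ is strict \'etale over $Z:=\bfA_Q\times\bfA^n$ with $P\to Q$ log smooth toric, and where the pullback of $\cJ$ to $Z$ is of the form $(t_{i_1},\ldots,t_{i_m},u^{q_1},\ldots,u^{q_s})$, with the $t_{i_j}$ among the coordinates of $\bfA^n$ and the $u^{q_j}$ monomials in $Q$. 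By log-smooth functoriality of Kummer blowings up (Theorem~\ref{Kummeblowth}(iii)) and \'etale descent (Theorem~\ref{Kummeblowth}(iv)), it suffices to verify the claim for $Z/B$.

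Second I compute the charts of $Bl_\cJ(Z)\to B$, following Exercise~\ref{admblowex} together with Exercise~\ref{kummerchartsex} for the stacky/Kummer part. On a $t_{i_j}$-chart the result is $\Spec(k[Q_s][t'_1,\ldots,\widehat{t_{i_j}},\ldots,t'_n])$, where $Q_s$ is the saturation of the submonoid of $Q^\gp\oplus\bbZ q$ generated by $Q$, $q$, and the $q_l-q$, with the log structure extended by $u^q=t_{i_j}$; on a $u^{q_j}$-chart the result is a $\mu_d$-quotient of $\Spec(k[Q_s][t'_\bullet])$ where $Q_s$ is the saturation of $Q+\sum_i\bbZ(q_i-q_j)$. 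Log smoothness of each such chart over $\bfA_P$ reduces to checking that $P\to Q_s$ is injective, saturated, and that $Q_s^\gp/P^\gp$ is torsion-free (as we are in characteristic zero, weaker torsion conditions would also suffice). The only nontrivial point is that the newly added generators are independent of $P$: on a $t_{i_j}$-chart this is because $t_{i_j}$ is a regular parameter of the \emph{relative} orbifold $Z/B$, so $q$ maps to a free generator of $Q_s^\gp/P^\gp$; on a monomial chart the added relations $q_i-q_j$ lie inside $Q^\gp$ and so cannot create any torsion in $Q_s^\gp/P^\gp$ beyond what was already present in $Q^\gp/P^\gp$.

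The main obstacle is precisely this toric bookkeeping; it is the only place where the hypothesis that $\cJ$ is a \emph{relative} center really enters, through the regularity of the $t_{i_j}$ relative to $B$. An equivalent but perhaps more conceptual route, modeled on the log-blowing-up interpretation sketched in the remark after Exercise~\ref{admblowex}, is to first enlarge $\cM_X$ by $\bigoplus_j\bbN\log(t_{i_j})$ to obtain a log smooth $Y/B$ on which $\cJ$ is monomial, then take the saturated log blowing up $Y'=LogBl_\cJ(Y)^\sat$ (which is log \'etale over $Y$, hence log smooth over $B$), and finally remove the strict transforms of $V(t_{i_j})$ from the log structure of $Y'$ to obtain $X'$; since these strict transforms are smooth relative divisors in $Y'/B$, their removal from the log structure preserves log smoothness over $B$, yielding the lemma.
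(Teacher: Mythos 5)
Your argument is essentially the paper's: reduce by \'etale descent and functoriality of Kummer blowings up to the explicit model $\Spec(A_P[Q][t_1,\dots,t_n])\to\Spec(A)$ (you push one step further, to the toric base $\bfA_P$ itself, which is legitimate because the Kummer blowing up and log smoothness are compatible with the base change along the strict smooth chart $B\to\bfA_P$), and then run the chart computation of Exercise~\ref{admblowex}, the only genuinely relative input being that the $t_{i_j}$ are \emph{relative} parameters, so the newly created monomial generators are independent of $P$. This is exactly the refinement the paper indicates, so the proposal is correct and takes the same approach.
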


Using \'etale descent the proof reduces to an explicit chart computation analogously to the argument outlined in Exercise~\ref{admblowex}. The necessary refinement is that the model case now is of the form $\Spec(A_P[Q][t_1\..t_n])\to\Spec(A)$, where $P$ and $Q$ give rise to the log structures and $t_1\..t_n$ is a family of relative parameters whose subfamily is used to define the center.

\subsubsection{Relative log order}
The {\em relative log order} of an ideal $\cI$ on a relative $B$-orbifold $X$ at a point $x$ is defined as follows: let $S=S_x$ be the log fiber containing $x$, then $\logord_{X/B,x}(\cI):=\ord_{S,x}(\cI|_S)$. As in the absolute case (see Exercise~\ref{logordex}), the relative log order can be computed using relative log derivations. As earlier, we denote the maximal relative logorder of $\cI$ on $X$ by $\logord_{X/B}(\cI)$ and view log order of $\cI$ as a function $\logord(\cI)\:X\to\bbN\cup\{\infty\}$.

\begin{exer}
(i) Show that $\cD_{X/B}$ restricts to the sheaf of derivations on $S$ and deduce that $\logord_{X/B,x}(\cI)$ is the minimal number $d$ such that $\cD_{X/B}^{(\le d)}(\cI_x)=\cO_x$. (Hint: use that log fibers and relative log derivations of the log smooth morphism $X\to B$ are usual fibers and relative derivations of the smooth morphism $X\to\Log_B$.)

(ii) Assume that $X$ is a variety. Show that $t\in\cO_{X,x}$ is a relative parameter at $x$ if and only if $\logord_x(t)=1$.
\end{exer}

\subsubsection{Base change functoriality}
All basic constructions we have discussed, including relative log derivations, centers, Kummer blowings up, and relative log order are compatible with arbitrary base changes $g\:B'\to B$, where $B'$ is smooth (due to our general restrictions), but the morphism does not have to be smooth (and  can even contract $B'$ to a point).

\begin{lem}\label{basechange}
Let $B'\to B$ be a morphism of log $k$-manifolds, let $f\:X\to B$ be a log orbifold with the pullback $f'\:X'\to B'$ and the base change morphism $g\:X'\to X$, let $\cI$ be an ideal on $X$ with pullback $\cI'=\cI\cO_{X'}$, and let $\cJ$ be a $d$-center on $X$ with pullback $\cJ'=(\cJ\cO_{X'_\ket})^\nor$. Then $\cD_{X'/B'}=g^*\cD_{X/B}$, $\logord(\cI')=\logord(\cI)\circ g$, $\cJ'$ is a $d$-center and $\Bl_\cJ'(X')=\Bl_\cJ(X)\times_XX'$.
\end{lem}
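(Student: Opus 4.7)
All four assertions can be checked strictly \'etale-locally (and Kummer-\'etale-locally) on $X'$, so my plan is to reduce each claim to an explicit chart computation along the lines of Exercise~\ref{admblowex} and Lemma~\ref{formallem}. Since $B'\to B$ is a morphism of log $k$-manifolds and $f$ is log smooth, the fs base change $f'$ is log smooth, and hence $X'$ is a log orbifold over $k$. Picking a geometric point $\bar{x}'\to X'$ with image $\bar{x}\to X$ and base images $\bar{b}',\bar{b}$, I may assume compatible monomial charts $P\to\ocM_{\bar{b}}$, $Q\to\ocM_{\bar{x}}$ and a strict \'etale map $X\to\Spec(k[Q][t_1\..t_n])$ sitting over a chart of $B$, with the analogous data on the primed side; relative parameters on the unprimed side then pull back to relative parameters on the primed side.

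For (1), I would invoke that $\Omega^1_{X/B}$ is locally free of rank $\dim(f)$ by log smoothness and that log K\"ahler differentials commute with fs base change, so $\Omega^1_{X'/B'}=g^*\Omega^1_{X/B}$; dualizing gives (1). Concretely, the local basis $\partial_{t_1}\..\partial_{t_n}$ together with $\partial_\phi$, for $\phi$ ranging over a basis of $\Hom(Q^\gp/P^\gp,\bbZ)$, pulls back to the analogous basis on the primed side. Statement (2) then follows from (1) by the same mechanism as Lemma~\ref{derfunct}(ii): the vertical piece $\cD_{X'/X}$ of derivations kills $g^{-1}\cO_X$, hence by Leibniz it preserves $\cI'$, so combined with (1) one gets $\cD^{\le d}_{X'/B'}(\cI')=\cD^{\le d}_{X/B}(\cI)\cO_{X'}$; the claim about $\logord$ is then immediate from the derivation characterization of the relative log order.

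For (3), a relative $d$-center locally takes the form $\cJ=(t_1\..t_r,u^{p_1}\..u^{p_s})^{(d)}$ with $V(t_1\..t_r)$ a relative suborbifold of $X/B$. Its pullback retains the same shape because suborbifold ideals pull back to suborbifold ideals under fs base change (log smoothness of relative suborbifolds is preserved) and monomials in $Q$ pull back to monomials in $Q'$; forming the $d$-th saturated power is a Kummer-local operation that commutes with the pullback on monoids. For (4), I would use the description $\Bl_\cJ(X)=\cProj_X(\cR_\cJ)^\nor$: the stacky $\cProj$ construction and the Rees algebra $\cR_\cJ=\oplus_d\pi_*(\cJ^d)$ both commute with base change on $X$, so $\cProj_X(\cR_\cJ)\times_XX'=\cProj_{X'}(\cR_\cJ\cO_{X'})$; after taking the fs saturated pullback and normalizing, this matches $\cProj_{X'}(\cR_{\cJ'})^\nor=\Bl_{\cJ'}(X')$ by an explicit identification on each $s$-chart, along the same toric computation that proves Theorem~\ref{Kummeblowth}(iii).

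The main obstacle is part (4): one must check that fs saturation and normalization cooperate with the stacky Proj and with the Kummer-level integral closure defining $\cJ'=(\cJ\cO_{X'_\ket})^\nor$. The failure mode to watch for is precisely the one illustrated at the end of \S\ref{twoapproaches}: the naive pullback $\cProj_X(\cR_\cJ)\times_XX'$ (without fs saturation) may carry extraneous stacky/root structure which disappears only after saturation. The cleanest way to control this is to fix a homogeneous generator $s$ of the irrelevant ideal of $\cR_\cJ$ and verify that the corresponding $s$-chart of each side is the same fs log orbifold; this reduces to a finite combinatorial check on the monoids $P\subset Q$ and $P'\subset Q'$, essentially toric in nature.
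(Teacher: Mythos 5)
Your overall route is the one the paper intends: the text offers no detailed proof of Lemma~\ref{basechange}, saying only that all four claims follow from explicit local descriptions (charts of log smooth morphisms and of the blowings up) together with the observation that regular and monomial relative parameters pull back to relative parameters on $X'$, and referring to \cite[2.4.9, 2.7.2, 2.8.14(i), 4.2.20(iv)]{ATW-relative}. Your chart reduction, the identification $\Omega^1_{X'/B'}=g^*\Omega^1_{X/B}$ under fs base change, the pullback of suborbifold ideals and monomials for the $d$-center claim, and the chart-by-chart comparison of the stacky $\cProj$ of the two Rees algebras (where, as you rightly flag, the fs saturation and normalization are exactly what must be checked, since pushforward from the Kummer-\'etale site and integral closure do not commute with the non-flat morphism $g$ for free) are all in line with that sketch.

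One step as written is off, though easily repaired. In your part (2) you invoke the mechanism of Lemma~\ref{derfunct}(ii) through ``the vertical piece $\cD_{X'/X}$''; but in the base-change situation $\cD_{X'/X}$ is not a subsheaf of $\cD_{X'/B'}$ -- it consists of derivations in the $B'/B$-directions -- and there is no vertical piece to dispose of, precisely because your (1) asserts $\cD_{X'/B'}=g^*\cD_{X/B}$ on the nose (contrast with Lemma~\ref{logsmooothfunct}, where one only has a split surjection and the derfunct-type argument is genuinely needed). The correct and simpler argument here is: $\cD_{X'/B'}$ is generated over $\cO_{X'}$ by pullbacks of sections of $\cD_{X/B}$ and $\cI'$ is generated by pullbacks of sections of $\cI$, so Leibniz applied to such products gives $\cD^{\le i}_{X'/B'}(\cI')=\cD^{\le i}_{X/B}(\cI)\cO_{X'}$ in both directions; since $g$ induces local homomorphisms on stalks, this ideal is trivial at $x'$ if and only if $\cD^{\le i}_{X/B}(\cI)$ is trivial at $g(x')$, which yields $\logord(\cI')=\logord(\cI)\circ g$.
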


In fact, all these properties easily follow from explicit local descriptions (of log smooth morphisms, charts of blowings up, etc.). In addition, one easily sees that if $X$ is a variety and $(t,q)$ is a family of regular and monomial relative parameters at $x\in X$, then their pullbacks $(t',q')$ form a family of regular and monomial relative parameters at any $x'\in X'$ over $x$. We refer to \cite[2.4.9, 2.7.2, 2.8.14(i) 4.2.20(iv)]{ATW-relative}.

\subsubsection{Log smooth functoriality}
Similar compatibilities hold for log smooth morphisms between $B$-orbifolds.

\begin{lem}\label{logsmooothfunct}
Let $B$ be a log $k$-manifold, let $f\:X\to B$ and $f'\:X'\to B$ be log orbifolds, let $g\:X'\to X$ be a log smooth $B$-morphism, let $\cI$ be an ideal on $X$ with pullback $\cI'=\cI\cO_{X'}$, and let $\cJ$ be a $d$-center on $X$ with the pullback $\cJ'=(\cJ\cO_{X'_\ket})^\nor$. Then the natural homomorphism $\cD_{X'/B'}\to g^*\cD_{X/B}$ is surjective and splits locally, $\logord(\cI')=\logord(\cI)\circ g$, $\cJ'$ is a $d$-center and $\Bl_{\cJ'}(X')=\Bl_\cJ(X)\times_XX'$.
\end{lem}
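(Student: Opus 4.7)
The plan is to mimic the proof of Lemma~\ref{basechange} with the obvious changes: replace the base change diagram by a log smooth $B$-morphism, and use the transitivity exact sequence of log derivations in place of the base-change identification. All four assertions are \'etale-local on the source, so I work in a chart and reduce to an explicit model calculation. Since $g$ is log smooth over $B$, we may assume (after a strict \'etale refinement) that $X = \Spec(A_P[Q][t_1\..t_n])$ and $X' = \Spec(A_P[Q'][t_1\..t_n,s_1\..s_m])$ over $B = \Spec(A_P)$, where $Q \to Q'$ is the Kummer (or more generally injective) chart of $g$, the $t_i$ are the relative regular parameters pulled back from $X$, and the $s_j$ are the new relative regular parameters contributed by $g$.

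For the first assertion, I would use the transitivity triangle for log cotangent sheaves of the composition $X' \to X \to B$, namely
\[
g^*\Omega^1_{X/B}(\log) \to \Omega^1_{X'/B}(\log) \to \Omega^1_{X'/X}(\log) \to 0,
\]
and note that log smoothness of $g$ makes the left arrow injective with locally free cokernel (this is the standard log Jacobi-Zariski sequence for log smooth morphisms). Dualizing over $\cO_{X'}$ yields the short exact sequence
\[
0 \to \cD_{X'/X} \to \cD_{X'/B} \to g^*\cD_{X/B} \to 0
\]
of locally free $\cO_{X'}$-modules, which splits locally by freeness. In the model above, the splitting is transparent: $\cD_{X'/B}$ is freely generated by $\partial_{t_i},\partial_{s_j},\partial_{\phi_\alpha}$, where $\phi_\alpha$ ranges over a basis of $\Hom((Q')^\gp/P^\gp,\bbZ)$, and the derivations coming from $X$ are $\partial_{t_i}$ together with those $\partial_{\phi_\alpha}$ that factor through $Q^\gp/P^\gp$.

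For the log order statement, I would argue exactly as in Lemma~\ref{derfunct}(ii): the surjective splitting above shows $g^*\cD_{X/B}^{\le d} \subseteq \cD_{X'/B}^{\le d}$ (viewed as operators), so $\cD_{X/B}^{\le d}(\cI)\cO_{X'}\subseteq \cD_{X'/B}^{\le d}(\cI')$. Conversely, since $\cD_{X'/X}$ kills elements of $\cI\cO_{X'}$ modulo $\cI'$ by the Leibniz rule, any iterated operator involving a factor from $\cD_{X'/X}$ lands in $\cI'$, so $\cD_{X'/B}^{\le d}(\cI') = \cD_{X/B}^{\le d}(\cI)\cO_{X'}$. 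Combined with the description of log order via derivations (as in the exercise just before \S3.2.6), this yields $\logord(\cI') = \logord(\cI)\circ g$.

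For the center and blowing up statements, I would first observe that the pullback under $g$ of a relative suborbifold ideal is a relative suborbifold ideal: if $Y = V(\cI_Y) \into X$ is a strict closed substack log smooth over $B$, then $Y \times_X X'$ is log smooth over $B$ (composition with log smooth $g$ and strict immersion) and strict in $X'$, so $\cI_Y\cO_{X'}$ is a suborbifold ideal. Monomial Kummer ideals pull back to monomial Kummer ideals, and saturation and integral closure commute with the flat pullback along the strict étale cover used to trivialize charts (with a further Kummer refinement for the saturation of $\cN$), so $\cJ' = (\cI_{Y'} + \cN')^{(d)}$ is again a relative $d$-center. The equality $\Bl_{\cJ'}(X') = \Bl_\cJ(X) \times_X X'$ in the fs category is then the absolute log smooth functoriality of Kummer blowings up from Theorem~\ref{Kummeblowth}(iii) applied to the log smooth morphism $g$ (ignoring the base $B$, which plays no role in the blowing up itself). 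The main point that requires care---and what I regard as the only non-routine step---is to verify that saturation of the monomial part commutes with the Kummer pullback used to trivialize $\cN$, since $\cN^\sat$ need not be finitely generated (cf.\ Exercise~\ref{kumex}(iv)); this is handled by choosing a single Kummer cover of $X$ that trivializes both $\cN$ and its pullback $\cN'$ simultaneously, after which the identification is tautological.
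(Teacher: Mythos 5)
Your proposal is correct, and on three of the four claims it runs along the same lines as the paper, which essentially defers to \cite[2.4.11, 2.7.6, 2.8.15(i), 4.2.20(v)]{ATW-relative} and deduces the blowing-up compatibility from the absolute statement (Theorem~\ref{Kummeblowth}(iii)), exactly as you do; your treatment of the center via ``pullback of a relative suborbifold is a relative suborbifold'' plus monomiality is also the intended one, and the worry about saturation is dispatched most cleanly by noting that pullback preserves integral dependence, so $(\cJ\cO_{X'_\ket})^\nor=((\cI'+\cN')^d)^\nor=(\cI'+\cN')^{(d)}$ without any discussion of finite generation. The genuine divergence is in the log order claim: the paper's argument is geometric --- one checks that a family of regular relative parameters at $x$ pulls back to a subfamily of such a family at $x'$, hence $g$ induces regular morphisms between log fibers, and then invokes the definition of relative log order via log fibers; the delicate point it flags is that when $g$ is not sharp some pulled-back monomial parameters become dependent at $x'$ and must be traded for new regular parameters. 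You instead prove $\cD^{\le d}_{X'/B}(\cI')=\cD^{\le d}_{X/B}(\cI)\cO_{X'}$ from the locally split sequence $0\to\cD_{X'/X}\to\cD_{X'/B}\to g^*\cD_{X/B}\to 0$ and Leibniz, and conclude via the derivation-theoretic characterization of the relative log order (the exercise preceding the lemma). This is a legitimate alternative which neatly sidesteps the sharpness bookkeeping, at the price of taking that characterization (i.e.\ the comparison of $\cD_{X/B}$ with derivations of the log fibers, via $\Log_B$) as an input. One caution about your chart normal form $X'=\Spec(A_P[Q'][t_1\..t_n,s_1\..s_m])$: it should not be read as asserting that the pulled-back relative parameters of $X/B$ extend to a family of relative parameters at every $x'$ --- at non-sharp points this is precisely what fails --- but since your actual arguments use only the transitivity sequence, Leibniz, and the model description of $\cD_{X'/B}$, nothing in the proof depends on that reading.
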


We refer to \cite[2.4.11, 2.7.6, 2.8.15(i), 4.2.20(v)]{ATW-relative}. Compatibility with Kummer blowings up follows from its analog for logarithmic $k$-manifolds, but the situation with relative parameters and log order is a bit subtle and requires care: if $g$ is not sharp at $x'\in X'$ with $x=g(x')$, then pullbacks of some log parameters at $x$ become linearly dependent at $x'$, and hence some log parameters should be removed and some ''additional'' regular parameters at $x'$ should be added. Also, the relation between the log fibers of $f$ and $f'$ is not that simple in this case. Nevertheless, a family of regular relative parameters at $x$ pullbacks to a subfamily of a family of regular relative parameters at $x'$, and this suffices to prove that $g$ induces regular morphisms between the log fibers, see \cite[2.7.6 and 2.7.7]{ATW-relative}. Compatibility of $g$ with relative log orders follows.

\subsection{The algorithm}
Once the framework is established the resolution, principalization and order reduction algorithms are constructed precisely in the same way as in Section~\ref{42} but using relative notions instead of the absolute ones. We will recall all these in the autopilot mode, indicate the only point, where a new phenomenon happens, and proceed to formulations of the main theorems.

\subsubsection{Reduction to principalization}\label{redpr}
Let $g\:Z\to B$ be a morphism of log varieties whose log structure is Zariski. Then for any point $z\in Z$ there exists a neighborhood $U$ which possesses a strict closed immersion into a log smooth $B$-scheme $X$. Indeed, working locally we can assume $f$ is modeled on a chart $\bfA_Q\to\bfA_P$, where $Q=\oM_z$ and $P=\oM_b$ for $b=g(z)$. Let $t_1\..t_n\in\cO_z$ be a family mapping to a basis of the cotangent space of the fiber of $Z\to B_P[Q]$ at $z$, then the induced morphism $Z\to X_0=\bfA^n\times B_P[Q]$ is strict and unramified at $z$, hence on a neighborhood $U$ of $z$ this morphism factors into a composition of a strict closed immersion $U\into X$ and a strict \'etale morphism $X\to X_0$. Clearly, the relative dimension of $X$ over $B$ is minimal possible, and with a bit more care one can prove that up to an \'etale correspondence this minimal strict closed embedding of $U$ into a relative log $B$-manifold is unique, see \cite[\S8.2]{ATW-relative}.

Now, the same argument as in the classical or absolute logarithmic cases shows that functorial principalization of ideals on $B$-manifolds implies functorial resolution of log $B$-schemes (or stacks) which are locally equidimensional and generically log smooth over $B$.

\subsubsection{Maximal contacts and coefficient ideals}
Let $X\to B$ be a relative log manifold, $\cI\subseteq\cO_X$ an ideal and $x\in X$ a point such that $d=\logord_x(\cI)<\infty$. Then the definitions of the maximal contact and coefficient ideals at $x$ are the same as in \S\ref{ordredsec}: a {\em maximal contact} to $\cI$ at $x$ is any $H=V(t)$, where $t\in\cD_{X/B}^{(\le d-1)}$ is a regular relative parameter at $x$; the {\em coefficient ideal} is the homogenized sum $\cC(\cI)=\sum_{a=0}^{d-1}(\cD_{X/B}^{\le a}(\cI),d-a)$.

Furthermore, in the maximal order case $d=\logord_{X/B}(\cI)$ we have precise analogs of Theorems \ref{equivth} and \ref{homogth} on equivalence of log order $d$-reductions of $\cI$ and log order $d!$-reductions of $\cC(\cI)|_H$, and on uniqueness of $H$ and $\cC(\cI)|_H$ up to an \'etale correspondence. The arguments are also precisely the same.

Finally, the non-maximal order case is reduced to the maximal order case by a precise analogue of Lemma~\ref{balancedlem}. To summarize, in the case of a finite relative log order the algorithm and its justification are precisely the same as in the absolute case discussed in Sections 3 and 4.

\subsubsection{A complication}
It remains to consider what should be the simplest case -- the case when the log order is infinite, and once again this case turns out to be not so simple. In the absolute case we had to introduce Kummer blowings up to deal with it, and now it turns out that the algorithm just can fail at this step...

So, assume that $\logord_x(\cI)=\infty$ and we are seeking for a relative log order $d$-reduction. In such a case locally at $x$ any $\cI$-admissible center $\cJ^{(d)}$ is of infinite relative log order, that is, it is monomial. So, the only thing we can do within our framework is to blow up the whole monomial hull $\cN=\cM(\cI)$ (or its saturation) viewed as the $d$-center $\cJ^{(d)}$ with $\cJ=\cN^{1/d}$ -- it is easy to see that blowing up any larger monomial ideal will not make the relative log order finite. If blowing up this ideal does not make the relative log order finite, then the algorithm fails.

On the other hand, infiniteness of the log order is detected (and controlled) by the differential saturation $\cD^\infty_{X/B}(\cI)$, which is contained in $\cN$. In the absolute case, Lemma~\ref{hullem} guarantees that $\cD^\infty_{X}(\cI)=\cN$ and it follows easily that the log order of the transform of $\cI$ drops after blowing up $\cN$, see Lemma~\ref{finorderlem}. In the relative case we use only the submodule of relative derivations $\cD_{X/B}\subseteq\cD_X$, that is, we do not have a non-trivial way to derive pullbacks of functions on $B$. In particular, a typical example when the algorithm fails is when $\cI$ is the pullback of a non-monomial ideal $\cI_0$ on $B$.

\begin{exam}\label{monomex}
Choose any $B$ with a proper non-zero ideal $\cI_0$ whose monomial hull is trivial: $\cM(\cI_0)=\cO_B$. For example, take $B$ with the trivial log structure and any proper ideal. Consider $X=\bfA^n_B=\Spec_B(\cO_B[t_1\..t_n])$ with the log structure induced from $B$.

(i) Then the principalization fails for the ideal $\cI=\cI_0\cO_X$ because $\logord_{X/B}(\cI)=\infty$ but $\cM(\cI)=\cO_X$.

(ii) More generally, the principalization fails for the ideal $\cI'=\cI+(t_1\..t_m)$ with any $0\le m\le n$. This time the first $m$ steps of the algorithm are just restrictions to the iterative maximal contacts, and then the algorithms reaches the situation described in (i) and fails. For example, if $B=\Spec(k[b])$ and $X=\Spec(k[t])$, then the relative principalization fails for $\cI'=(t,b)$. This can be also seen directly: $\cI$ is not contained in any relative center: the absolute algorithm would blow up the center $(t,b)$, but it is not a relative center (even worse, blowing up $(t,b)$ produces a log scheme which is not log smooth over $B$).
\end{exam}

\subsubsection{Base change}
The above example also indicates a way to solve the problem: if $\cI_0$ is monomial, then $\cI'$ is a relative center and we can freely blow it up (in particular, the result stays log smooth over the base). Therefore it is natural to expect that enlarging the log structure on $B$ we can improve the situation. Certainly such an operation can make $B$ not log smooth, so in general one should also modify the underlying scheme. It turns out that indeed, one can monomialize $\cD_{X/B}$-saturated ideals just by modifying the base $B$. We formulate this monomialization theorem now and postpone a discussion about its proof until \S\ref{monomsec}. We say that a morphism $f\:B'\to B$ between log manifolds is a {\em blowing up along $\cJ\subseteq\cO_B$} if scheme-theoretically one has that $\uB'=Bl_\cJ(\uB)$ and the log structure on $B'$ is induced over the log structure of $B$ by the exceptional divisor $E_f$, that is, the divisors $D\subset B$ and $D'\subset B'$ defining the log structures are related by $D'=f^{-1}(D)\cup E_f$.

\begin{theor}\label{monomth}
Let $B$ be a log manifold over $k$, let $f\:X\to B$ be a log orbifold, and let $\cI$ be an ideal on $X$ such that $\cD_{X/B}(\cI)\subseteq\cI$. Then there exists a morphism of log manifolds $g\:B'=Bl_\cJ(B)\to B$ such that $g$ is a blowing up and the pullback $\cI'=\cI\cO_{X'}$ is a monomial ideal, where $X'=X\times_BB'$. Moreover, one can achieve that the center $\cJ$ of $g$ is monomial outside of the schematic image of the support $V(\cI)$ in $B$.
\end{theor}

\begin{rem}
If either $B=\Spec(k)$ or $B$ is a curve and the log structure is non-trivial at any point of $f(V(\cI))$, then any blow up $B'\to B$ whose center is monomial outside of $f(V(\cI))$ is trivial (because the only way to modify $B$ is to increase the log structure). Thus, in this case the theorem just claims that $\cI$ is monomial and when $B$ is a point we recover Lemma~\ref{hullem}. In particular, for such a base $B$ the main results we will formulate below hold without any modification of $B$.
\end{rem}

\subsubsection{Relative log order reduction}
The first main result concerns relative log order $d$-reduction, whose definition copies Definition~\ref{ordereddef} with $\cI$-admissible centers replaced by $\cI$-admissible $B$-relative centers.

\begin{theor}\label{relativeorderred}
There exists a method $\calF$ which accepts as an input a log orbifold $f\:X\to B$, where $B$ is a log manifold, an ideal $\cI\subseteq\cO_X$ and a number $d\ge 1$, and either fails or outputs a relative log order $d$-reduction $\cF(f,\cI,d)\:X'\longto X$ of $\cI$ such that the following conditions are satisfied.

(i) Existence: There exists a blowing up $B'\to B$ whose center outside of $\overline{f(V\cI))}$ is monomial such that $B'$ is a log manifold and $\cF$ does not fail on $f'\:X'=X\times_BB'\to B'$, $\cI'=\cI\cO_{X'}$ and $d$.

(ii) Base change functoriality: if $\cF$ does not fail on $f,\cI,d$, then for any morphism of log manifolds $B'\to B$ with fs base change $f'\:X'=X\times_BB'\to B'$ and $\cI'=\cI\cO_{X'}$ we have that $\cF(f',\cI',d)$ is obtained from $\cF(f,\cI,d)\times_XX'$ by omitting all trivial blowings up (in particular, it does not fail).

(iii) Log smooth functoriality: if $\cF$ does not fail on $f,\cI,d$, then for any log smooth $g\:X'\to X$ with $\cI'=\cI\cO_{X'}$ we have that $\cF(f\circ g,\cI',d)$ is obtained from $\cF(f,\cI,d)\times_XX'$ by omitting all trivial blowings up.

(iv) For closed immersions of constant codimension $X\into X'$ of $B$-orbifolds the re-embedding principle is satisfied.
\end{theor}

In the more classical language of marked ideals $(\cI,d)$ this is \cite[Theorem~7.1.1]{ATW-relative}. The algorithm is constructed precisely as its absolute analogue (and particular case when $B=\Spec(k)$) in \S\ref{algsec}. The new claims in the relative algorithm are related to base changes -- (i) and (ii). The proof of (ii) is straightforward because all ingredients of the framework satisfy the base change functoriality, see Lemma~\ref{basechange}. Existence is proved inductively as follows. Recall that the algorithm in \S\ref{algsec} produces an $\cI$-admissible $d$-sequence $$Y\to X_n\dashto X_{n-1}\dashto\ldots\dashto X_1\to X,$$ where the first blowing up makes the log order finite, and the sequences $X_{i+1}\dashto X_i$ are constructed inductively (using maximal contacts) and reduce the log order of the clean part until it drops below $d$.

In the relative situation each step can fail. However, the first step certainly succeeds if $\cD^\infty_{X/B}(\cI)$ is monomial, and by the monomialization theorem \ref{monomth} this condition is achieved after an appropriate blowing up $B'\to B$. Thus, setting $X'=X\times_BB'$ and $\cI'=\cI\cO_{X'}$ we have that $\cD^{\infty}_{X'/B'}(\cI')=\cD^\infty_{X/B}(\cI)\cO_{X'}$ is monomial and hence the first step of $\cF(f',\cI',d)$ does not fail. Moreover, this is also the case for any further base change $B''\to B'$. By the induction assumption, the second step of $\cF(f',\cI',d)$ succeeds after an appropriate blowing up $B''\to B'$ and then it also succeeds after any further blowing up by claim (ii) of the theorem, and so on. In the end we find a sequence of blowings up $B^{(n)}\dashto B$ (which can be represented as a single blowing up) such that all steps except the final cleaning succeed after the base change $B^{(n)}\to B$. It remains to note that the final cleaning blows up an exceptional divisor, hence it succeeds automatically.

\begin{rem}
Using the monomialization theorem as a black box, the required base change $B'\to B$ is dictated by $\cF$ in a canonical (in fact, simple algorithmic) way. However, the current proof of the monomialization theorem is existential and difficult, see the discussion in \S\ref{monomsec} below.
\end{rem}

\subsubsection{Relative principalization}
Taking $d=1$ in the previous theorem one obtains the relative principalization theorem \cite[Theorem~1.2.6]{ATW-relative}.

\begin{theor}\label{rellogprincth}
On the category of relative log orbifolds whose targets are log manifolds over $k$ there exists a relative logarithmic principalization method $\cP$, which is base change functorial and log smooth functorial, satisfies the re-embedding principle, and succeeds on each $(f\:X\to B,\cI)$ after a large enough blowing up of $B$ whose center is monomial outside of $\overline{f(V(\cI))}$.
\end{theor}

\subsubsection{Relative desingularization}
Finally, by the usual methods discussed in \S\ref{redpr} the above theorem implies the following functorial semistable reduction theorem, see also \cite[Theorem~1.2.12]{ATW-relative}.

\begin{theor}\label{reldesing}
There exists a relative desingularization method $\cR$ which accepts as an input a generically log smooth morphism $g\:Z\to B$ of $k$-varieties with a locally equidimensional $Z$ and a log smooth $B$, and either fails or outputs a stack-theoretic modification $Z'\to Z$ such that $Z'\to B$ is log smooth and the following condition are satisfied.

(i) Existence: There exists a blowing up $B'=Bl_\cJ(B)\to B$ such that $B'$ is a log manifold, $\cJ|_U$ is monomial for any open log subscheme $U\subseteq B$ such that the restriction $Z\times_BU\to U$ is log smooth, and $\cR$ does not fail on the base change $g'\:Z'=Z\times_BB'\to B'$.

(ii) Base change functoriality: if $\cR$ does not fail on $g$, then for any morphism of log manifolds $B'\to B$ with fs base change $g'\:Z'=Z\times_BB'\to B'$ we have that $\cR(g')=\cR(g)\times_BB'$.

(iii) Log smooth functoriality: if $\cR$ does not fail on $g$, then for any log smooth $h\:X'\to X$ we have that $\cR(g\circ h)=\cR(g)\times_XX'$.
\end{theor}

\begin{rem}
(i) The above theorem can be viewed as a weak form of a semistable reduction theorem over an arbitrary base. By complicated but purely combinatorial methods one can improve the log smooth morphism $Z'\to B$ by a log blowing up $Z''\to Z'$ so that $Z''\to B$ is a so-called semistable morphism, see \cite{semistable}. When $B$ is one-dimensional this is precisely the semistable reduction of \cite{KKMS}.

(ii) So far Theorem~\ref{reldesing} is the only known resolution of morphisms (or semistable reduction) compatible with base changes, even in the case when the dimension of $B$ is $1$. In particular, using noetherian approximation on the base it implies semistable reduction theorem over any valuation ring, not necessarily discretely valued. Proving the latter was one of the main motivations for developing the logarithmic resolution methods.
\end{rem}

\subsubsection{Destackification}
As in the absolute case, the relative principalization and desingularization methods output a stack-theoretic modification. Using a canonical relative destackification procedure one can obtain a further modification $X''\to X'$ (resp. $Z''\to Z'$) such that the relative coarse space $X''_{\cs/X}$ (resp. $Z''_{\cs/Z}$) is log smooth over $B$. This provides a representable principalization and desingularization methods which are only smooth functorial, see \cite[Theorem~1.2.14]{ATW-relative}.

\subsection{The monomialization theorem}\label{monomsec}
Finally let us discuss the proof of the monomialization theorem. The argument in \cite[\S3]{ATW-relative} is surprisingly involved and most probably some improvements will be found in the future, so we will only outline the main ideas. Possibly the concept of a separate monomialization theorem is suboptimal, and a more natural monomialization should be intertwined with principalization. We will discuss some arguments in favor of this in the end of the section.

\subsubsection{The case of $\dim(B)\le 1$}
We start with the simple but already very useful case, when $\dim(B)\le 1$. Recall that the case when $B$ is a point was already established in Lemma~\ref{hullem} (and Exercise~\ref{operatorexer}). In fact, the case of a curve is similar. Working locally on $B$ it suffices to consider the case when $B=\Spec(R)$, where $R$ is a field or a dvr with a uniformizer $\pi$ and the log structure is generated by $\bbN\log(\pi)$. Furthermore, monomiality satisfies formal descent: $\cI$ is monomial at $x\in X$ if and only if $\hatcI$ is monomial at $\hatcO_x$ (we use that the completion homomorphism is flat and hence the ideal $\cI_x$ is determined uniquely by its formal completion), hence we should only prove that $\hatcI$ is monomial. Finally, the claim is \'etale-local on $X$, hence we can assume that the log structure is Zariski. Now, the claim reduces to the following formal computation. This is a difficult exercise and we outline the main lines of the argument.

\begin{exer}
Assume that $R=k\llbracket\pi\rrbracket$ is a complete DVR provided with the log structure generated by $P=\bbN\log(\pi)$ and $$A=R_P\llbracket Q\rrbracket\llbracket t_1\..t_n\rrbracket\wtimes_kl=l\llbracket Q\rrbracket\llbracket t_1\..t_n\rrbracket$$ where $P\into Q$ is an embedding of sharp monoids and $l/k$ is a field extension. Consider  the module $\cD\subseteq\Der_R(A,A)$ generated by log derivations of two types: (i) $\partial_i=\partial_{t_i}$ vanishes on $l$, $Q$, each $t_j$ with $j\neq i$ and satisfies $\partial_i(t_i)=1$, (ii) for any $\phi\:Q^\gp\to\bbZ$ with $\phi(P)=0$ the derivation $\partial_\phi$ vanishes on $l$, each $t_i$ and restricts to $\phi$ on $Q$. Prove that any $\cD$-stable ideal $I\subseteq A$ (i.e. $\cD(I)\subseteq I$) is monomial. (Hint: One should adopt the proof from Exercise~\ref{operatorexer} to this situation. First, use the same argument as there to show that $I$ is generated by elements of the form $c=\sum_i c_iq_i$, where $c_i\in l^\times$ and $q_i$ have the same image in $Q^\gp/P^\gp\otimes\bbQ$ (unlike Exercise~\ref{operatorexer} we cannot distinguish elements from $P$ by $R$-derivations). Choose $i_0$ such that $q_i-q_{i_0}$ is a non-negative element in $P^\gp$ for any $i$ (informally $q_i=q_{i_0}+r\pi$ with $r\in\bbQ_{\ge 0}$), and show that $q_i-q_{i_0}\in Q$ for any $i$. Deduce that $c=uq_{i_0}$ for a unit $u\in A^\times$, and hence $I$ is monomial.)
\end{exer}

\subsubsection{The general case}
If $B$ is arbitrary one starts with the same approach but has to bypass various technical complications. A minor issue is that if $x$ is not closed in the fiber and $l=k(x)$ is not algebraic over $k(b)$, one should also consider derivations in ``constant'' directions. A more serious obstruction is that derivations do not allow to control algebraic extensions $l/k$ and the torsion of $Q^\gp/P^\gp$. In the formal computation one assumes that these obstructions are trivial, and the general case is reduced to this by a Kummer descent and a cofinality argument, see \cite[Lemmas~3.5.8, 3.6.3]{ATW-relative}. Furthermore, in the formal case one only shows that $I$ is generated by an ideal $\hatJ\subseteq\hatcO_b$ and this is the maximum one can get from derivations -- by continuity $\cO_b$-derivations vanish on $\hatcO_b$.

Nevertheless, if $\hatJ$ is open, then it is the completion of an ideal $J\subseteq\cO_b$, and blowing up $J$ on $B$ we achieve monomialization. This is automatically the case when $f(V(I))$ is a finite union of closed points of $B$, and the general case is achieved by induction on the dimension of $\overline{f(V)}$ -- first one constructs a blowing up which monomializes $\cI$ over the generic points of $\overline{f(V)}$, then over the generic points of the remaining bad locus, etc.

The following remark is rather technical, and can be safely skipped by the reader.

\begin{rem}
The framework of the relative principalization is constructed for arbitrary log smooth morphisms $f\:X\to B$, including morphisms which are not sharp. Sometimes this made constructions more complicated, though usually it sufficed just to work with the morphism $X\to\Log_B$. In contrast, the proof of the monomialization theorem only works when $f$ is sharp (and for simplicity it is even assumed to be integral at one place in \cite[Section ~3]{ATW-relative}). This forces one to start with a large enough monomial blowing up of the base and results in a monomializing blowing up which can be non-trivial (though monomial) outside of $\overline{f(V(\cI))}$. We do not know if this can be improved. As a consequence, the same limitation holds in the formulation of the main theorems on relative principalization and desingularization. In fact, even if one starts principalization with a sharp morphism $f$, typical admissible blowings up of $X$ will no longer be sharp over the base, and each time the algorithm will have to modify the base it will also modify the monomial locus over which the morphism is not sharp.
\end{rem}

\subsubsection{Canonicity of the base change}
The monomialization theorem proved in \cite{ATW-relative} is existential. It is a natural question if the modification $B'\to B$ can be found in a canonical (or functorial) way. The answer seems to be positive, but working out details turned out to be really heavy and we have not brought it to satisfactory form. In addition, the resulting method, although canonical, may be difficult for a practical implementation. The main reason for this is that the problem is not local -- the situation at $b\in B$ depends on the whole fiber of $X$ over $b$, which does not even have to be connected. Such a method cannot be analogous to usual embedded principalization.


\section{The dream algorithms}\label{dreamsec}
The Kummer centers used in the logarithmic method are of the form $$(t_1\..t_r,u^{p_1/d}\..u^{p_s/d}),$$ where $u^{p_i}$ are monomials. This can also be viewed as a stack-theoretic refinement of the weighted blowing up with weights $(1\..1,d\..d)$ or a usual blowing along $(t_1^d\..t_r^d,u^{p_1}\..u^{p_s})$, so the following question is natural: once the stack-theoretic methods are at our disposal, can one use arbitrary weights? Can one blow up centers like $(t_1^{d_1}\..t_r^{d_r})$? In fact, we already saw in \cite{stacksnotes} that indeed, there is a natural stack-theoretic definition of such weighted blowings up of manifolds which outputs an orbifold (and will briefly recall some details below).

The next question is if including arbitrary weighted blowings up in the basic framework leads to a new algorithm. This was precisely the question we studied after discovering the logarithmic principalization, and the result was somewhat unexpected. In a sense it was too good: weighted principalization (and resolution) does not need logarithmic structure (or boundary) at all, and it works by reducing a natural simple invariant by each weighted blowing up. In other words, one obtains an ``ideal'' algorithm without inner structure/memory -- it just iteratively finds an admissible center with largest possible invariant and blows it up. The famous example of Whitney umbrella shows that such an algorithm does not exist within the classical framework, but we saw in \cite[\S3.4]{weightedexpo} how a single weighted blowing up at the pinch point improves the singularity.

In addition, one can also consider the general weighted centers in the logarithmic setting and this leads to a logarithmic dream algorithm, which was developed by Quek in \cite{Quek} in the absolute case.

\subsection{Weighted blowings up}\label{wblowsec}
The main new ingredient in weighted algorithms is provided by weighted blowings up. So, we start with a brief review of their definition, historical context, and modern formalism. The principalization algorithm only blows up smooth varieties $X$, but we will try to formulate our definitions and statements in the maximal generality, when they apply without significant changes -- usually this will be the generality of reduced or normal varieties.

\subsubsection{Weighted blowings up}
Let $X=\Spec(A)$ be a smooth affine variety, let $t_1\..t_d\in A$ be functions on $X$ that form a partial family of regular parameters at any point of $V=V(t_1\..t_d)$ (in other words, $V$ is smooth of pure codimension $d$), and let $w_1\..w_d\in\bbN_{\ge 1}$. The classical weighted blowing up $X'\to X$ associated with $t_1\..t_d$ and weights $w_1\..w_d$ is glued from the charts $X'_i=\Spec(A_i)$, where $A_i$ is the normalization of the $A$-subalgebra of $A_{t_i}$ generated by the fractions $(\prod_{j=1}^dt_j^{n_j})/t_i^n$ such that $\sum_{j=1}^dn_jw_j\ge nw_i$.

\begin{exer}
(i) Check that $X'=Bl_{\left(t_1^{l_1}\..t_d^{l_d}\right)}(X)^\nor$, where $l_i\ge 1$ are chosen so that $l_iw_i$ is the same number $N$ for any $i$. In other words, $$(w_1:w_2:\ldots:w_d)=(l_1:l_2:\ldots :l_d)^{-1}.$$

(ii) Check that blowing up of $\Spec(k[x,y])$ along $(x,y)$ with weights $1,n$ creates a singularity of type $A_{n-1}$ (\'etale locally looking as $uv=w^n$).
\end{exer}

\subsubsection{A stack theoretic refinement}
In order to freely use weighted blowings up in principalization one should modify their definition so that the outcome is smooth. As in the logarithmic algorithm this can be achieved by a stack-theoretic refinement of the classical definition. The most naive way is analogous to the first approach mentioned in \S\ref{twoapproaches} -- one (locally) considers the Galois cover $Y\to X$ generated by $s_i=t_i^{1/w_i}$ with $1\le i\le d$ and the blowing up $Y'=Bl_{(s_1\..s_d)}$ and then defines the weighted blowing up as the stack-theoretic quotient $X'=[X'/G]$, where $G=\prod_{i=1}^d\mu_{w_i}$.

The disadvantage of this definition is that, similarly to the classical definition of weighted blowings up, it is very ad hoc and coordinate dependent. It is even not so easy to globalize it, not to mention such notions as $\cI$-admissibility, etc. This happens because intuitively the weighted blowing up center is something of the form $(t_1^{1/w_1}\.. t_d^{1/w_d})$, but the  notation needs to be formalized. So the solution should start with introducing a notion of generalized ideals, where such beasts can live.

\subsubsection{Valuative ideals}
Similarly to Kummer ideals one can try to consider an appropriate topology $\tau$ on $X$ where covers generated by extracting roots of parameters are open covers and consider ideals in $\cO_{X_\tau}$. Since such covers do not even have to be flat, it seems most natural to consider the $h$-topology or the equivalent topology generated by open covers, finite covers and modifications. It turns out that points of these topologies are easily described -- they are valuations on $k(X)^a$ with center on $X$, so one can use the much more down-to-earth definition of valuative ideals that we are going to recall. We start with a bit more particular case, where roots are not extracted.

\begin{exer}\label{val1}
For a reduced variety $X$ consider the {\em Riemann-Zariski space} $$\RZ(X)=\gtX=\lim_{X_i\to X}X_i,$$ where the limit is taken over all modifications $X_i\to X$ in the category of locally ringed spaces. In particular, $|\gtX|=\lim_i|X_i|$ and $\cO_\gtX$ is the colimit of pullbacks of $\cO_{X_i}$ to $\gtX$.

(0) Show that $\RZ(X)=\coprod_{i=1}^n\RZ(X_i)$, where $X_1\..X_n$ are the irreducible components of $X$.

(i) Prove the following results from \cite[\S3.2]{Temkin-stable}: if $X$ is integral, then for each $x\in\gtX$ the ring $\cO_{\gtX,x}$ is a valuation ring of $k(X)$ with center on $X$. Deduce that this provides a bijective correspondence between the points of $\gtX$ and valuation rings of $k(X)$ with center on $X$.

(ii) Show that ideals in the topology generated by modifications and Zariski covers correspond bijectively to ideals of $\cO_\gtX$. In particular, a finitely generated ideal $\cI\subseteq\cO_\gtX$ is generated by an ideal $\cI_i$ on some modification $X_i$ of $X$. Moreover, $\cI$ is invertible and $\cI_i$ can be chosen invertible. (Hint: one can either use that a finitely generated ideal in a valuation ring is principal or that any ideal becomes invertible after blowing up.)

(iii) Assume that $X$ is normal. Show that two ideals $\cI,\cJ$ induce the same ideal on $\gtX$ if and only if $\cI^\nor=\cJ^\nor$. (Hint: if $\cI^\nor=\cJ^\nor$, then already their pullback to $Bl_\cI(X)^\nor$ coincide. In the opposite direction prove that $\cI^\nor=\cap_{x\in\gtX}\cI\cO_{\gtX,x}$ analogously to the fact that the integral closure of a domain $A$ coincides with the intersection of all valuation rings of $\Frac(A)$ containing $A$.)
\end{exer}

Furthermore, a finitely generated ideal in $\cO_\gtX$ can be described by a section of the {\em sheaf of values} $\Gamma=\Gamma_\gtX=k(X)^\times/\cO_\gtX^\times$ or even of its positive part $\Gamma_+=(\cO_\gtX\setminus\{0\})/\cO_\gtX^\times$. This provides the most elementary way to define such ideals, and it is easy to translate various operations on ideals into the language of sections:

\begin{exer}\label{val2}
(i) Show that the stalk $\Gamma_x$ at $x\in\gtX$ is the group of values of the valuation ring $\cO_{\gtX,x}$, and $\Gamma_{+,x}$ is the submonoid of non-negative elements.

(ii) Show that the homomorphism $(\cO_\gtX\setminus\{0\})\onto\Gamma_+$ (resp. $k(X)^\times\onto\Gamma$) induces a bijective correspondence between finitely generated ideals of $\cO_\gtX$ and invertible ideals of the monoid $\Gamma_+$ and the latter are in a bijective correspondence with the generators $s\in\Gamma_+(\gtX)$, which are global sections of $\Gamma_+$. Prove that in the same way sections of $\Gamma$ correspond to fractional ideals of $\cO_\gtX$.

(iii) Let $I=(f_1\..f_n)$ be an ideal on $X=\Spec(A)$ and $\cI$ the induced ideal on $\gtX$ with the associated section $v_I\in\Gamma_+(X)$. Show that $v_I=\min_i v_i$, where $v_i$ is the section corresponding to $(f_i)$, that is, $v_i(x)=v_x(f_i)$ where $v_x\:k(X)^\times\to\Gamma_x$ is the valuation of $x\in\gtX$. Also show that for any $1\le i\le n$ the locus given by $v_I=v_i$ consists of the valuations centered on the $f_i$-chart of $Bl_{I}(X).$

(iv) Show that $v_{I+J}=\min(v_I,v_J)$ and $v_{IJ}=v_I+v_J$. In particular, $v_{I^n}=nv_I$. Also show that $v_I\le v_J$ if and only if $J\subseteq I^\nor$. (Hint: by Exercise~\ref{val1}(iii) $v_I=v_J$ if and only if $I^\nor=J^\nor$.)

(v) Let $X=\Spec(k[x,y])$. Find ideals representing $s_1=\min(v_x,v_y)$ and $s_2=\max(v_x,v_y)$. (Hint: since $s_1+s_2=v_x+v_y$, the section $s_2$ should be represented by something like $(xy)I^{-1}$, where $I=(x,y)$. The latter can not be represented by an ideal on $X$, but makes perfect sense on the modification $X'=Bl_I(X)$.)
\end{exer}

\begin{rem}
The elements of $\Gamma_+(\gtX)$ are called {\em valuative ideals}. We have just seen that they really encode finitely generated ideals and the formalism of valuative ideals is extremely simple.
\end{rem}

\subsubsection{Valuative $\bbQ$-ideals}
Since valuative ideals are very simple objects one can easily extract roots from them: a {\em valuative $\bbQ$-ideal} is a section of the sheaf $\Gamma_{\bbQ,+}$, which is the saturation of $\Gamma_+$ in $\Gamma_\bbQ=\Gamma\otimes\bbQ$. One can use the same formalism to operate with valuative $\bbQ$-ideals as in the previous section -- the basic operations are summation, minimum and multiplication by a positive rational number.

We will not need the following remark about other interpretations of valuative $\bbQ$-ideals.

\begin{rem}
(i) A valuative $\bbQ$-ideal is an effective $\bbQ$-Cartier divisor on a fine enough modification.

(ii) In characteristic zero one can view a valuative $\bbQ$-ideals as an $h$-ideal.
\end{rem}

\begin{exam}
(i) If $f_1\..f_n\in\cO_X(X)$ are global functions and $q_1\..q_n\in\bfQ_{>0}$ then we will use the suggestive notation $(f_1^{q_1}\..f_n^{q_n})=\sum_{i=1}^n(f_i)^{q_i}$ to denote the valuative ideal $\gamma=\min_{1\le i\le n}q_iv_{f_i}$.

(ii) Assume now that $X$ is smooth. If $\gamma$ is such that locally on $X$ there exists a presentation $\gamma=(t_1^{q_1}\..t_n^{q_n})$ such that the {\em support} $V(t_1\..t_n)$ of $\gamma$ is smooth of codimension $n$, then $\gamma$ is called a {\em $\bbQ$-regular center} on $X$. Such  a center is called a {\em smooth weighted center} if one can choose a presentation with $q_i=1/w_i$ for each $i$, and a smooth weighted center is {\em reduced} if in addition $(w_1\.. w_n)=1$. The tuple $\bfw=(w_1\..w_n)$ is called the {\em tuple of weights}.
\end{exam}

If the support of a $\bbQ$-regular center is of codimension 2 and higher, then there are many different ways to choose the regular parameters. For example, $V(x,y^q)=V(x+y^n,y^q)$ for any natural $n\ge q$. However the weights are well defined.

\begin{exer}
Show that the multiplicities $q_1\..q_n\in\bbQ_{>0}$ are uniquely determined by a $\bbQ$-regular center $\gamma=(t_1^{q_1}\..t_n^{q_n})$.
\end{exer}

\subsubsection{Blowings up of valuative $\bbQ$-ideals}
To any valuative $\bbQ$-ideal $\gamma\in\Gamma_{\bbQ,+}(\gtX)$ on a normal variety $X$ one associates the graded $\cO_X$-algebra $\cR_\gamma=\oplus_{d\ge 0}\cR_d$ called the {\em Rees algebra} of $\gamma$ and defined by $\cR_d(U)=\{f\in\cO_X(U)|\ v_f\ge d\gamma|_\gtU\}$, where $\gtU=\RZ(U)\subseteq\gtX$. Then the blowing up of $X$ along $\gamma$ is the stack-theoretic Proj of the Rees algebra: $Bl_\gamma(X)=\cProj_X(\cR_\gamma)$.

\begin{rem}
Each $\cR_d$ is an ideal on $X$, which is the pushforward of the $d$-th power of the ideal $(\gamma\cO_\gtX)^d$. So our definition of the Rees algebra and its blowing up is a precise analogue of the definition of Kummer blowings up in \ref{kumblowsec}.
\end{rem}

\begin{exer}
Let $X$ be a normal variety with a valuative $\QQ$-ideal $\gamma$ of the special form $\gamma=(f_1^{1/d_1}\..f_n^{1/d_n})$ (these are $\QQ$-ideal or idealistic exponents from \S\ref{idealexp} below).

(i) The algebra $\cR_\gamma$ is integrally closed and finitely generated over $\cO_X$. 

(ii) If $\gamma=v_\cI$ for a usual ideal $\cI\subseteq\cO_X$, then $\cR_\gamma=\oplus_{d\in\bbN}\cI^{(d)}=(\oplus_{d\in\bbN}\cI^d)^\nor$ is the integral closure of the usual Rees algebra of $\cI$. In particular, $Bl_\gamma(X)=Bl_\cI(X)^\nor$ and it is singular even for the valuative center associated with $\cI=(t_1,t_2^2)$.

(iii) $\gamma$ corresponds to a usual invertible ideal on $X'=Bl_\gamma(X)$.
\end{exer}

\subsubsection{Smooth weighted blowings up}
By a {\em smooth weighted blowing up} of $X$ we mean blowing up of a smooth weighted center. Such blowings up output smooth stacks, unlike the blowings up along an arbitrary $\bbQ$-reduced center. The following result is established by a direct chart computation (e.g. see \cite[\S3.6]{weightedexpo}).

\begin{exer}\label{smoothweightedex}
(i) Let $w=(w_1\..w_n)\in\bbN_{\ge 1}^n$ be a tuple of weights and $\gamma=(t_1^{1/w_1}\..t_n^{1/w_n})$ a smooth weighted center on $X$. Then $X'=Bl_\gamma(X)$ is a smooth DM stack whose stabilizers on the $i$-th chart are subgroups of $\mu_{w_i}$. The smooth weighted center $\gamma$ becomes a usual invertible ideal on $X'$ which will be denoted $\gamma\cO_{X'}$.

(ii) If $d\ge 1$ and $\gamma'=d^{-1}\gamma=(t_1^{1/dw_1}\..t_n^{1/dw_n})$, then $Bl_{d^{-1}\gamma}$ is the root stack obtained from $Bl_\gamma(X)$ by extracting the $d$-th root from $\gamma\cO_{X'}$.
\end{exer}

\subsubsection{Associated weighted blowings up}
The weighted algorithms use certain $\bbQ$-regular centers including all those with natural multiplicities, which correspond to usual ideals. Blowing up such a center can output a singular variety, hence we should use a stack-theoretic refinement instead. The trick is to blow up an appropriate root of the center.

\begin{defin}\label{Def:wBl}
Assume that $\gamma=(t_1^{q_1}\..t_n^{q_n})$ is a $\bbQ$-regular center. Chose the representation $q_i=a_i/b_i$ with $(a_i,b_i)=1$ and let $a=\lcm(a_1\..a_n)$. Then $a^{-1}\gamma=(t_1^{q_1/a}\..t_n^{q_n/a})$ is the {\em smooth weighted center associated} with $\gamma$ and $X'=Bl_{a^{-1}\gamma}(X)$ is the {\em associated weighted blowing up}. We will use the notation $X'=wBl_\gamma(X)$.
\end{defin}

\begin{rem}
The blowing up $Bl_\gamma(X)$ is a partial coarsening of the weighted blowing up $wBl_\gamma(X)$. The scaling factor $a=\lcm(a_1\..a_n)$ is chosen so that the stack-theoretic refinement $wBl_\gamma(X)$ is smooth and the stacky structure is increased as little as possible.
\end{rem}

\subsection{$\bbQ$-ideals and idealistic exponents}\label{idealexp}
In principle, the formalism of valuative $\bbQ$-ideals introduced in \cite{ATW-weighted} covers our needs. This section will not be used in the sequel, but it is quite enlightening to also study the smaller class of $\bbQ$-ideals which really play a role in this story, especially because they formalize the classical notions of idealistic exponent and marked ideals. These notions are well behaved only on normal schemes, so we restrict to this generality. We also refer to \cite[\S2.2]{Quek} and \cite[\S2.1]{weighted}.

\subsubsection{$\bbQ$-ideals}
A {\em $\bbQ$-ideal} on a normal scheme $X$ is a valuative $\bbQ$-ideal which locally on $X$ is of the form $\gamma=\min(d_i^{-1}v_{f_i})$ where $f_i$ are functions. In other notation, $\gamma=(f_1^{1/d_1}\..f_n^{1/d_n})$. In particular, a $\bbQ$-regular center is a $\bbQ$-ideal. In a sense, $\bbQ$-ideals generalize usual ideals in the same meaning as valuative $\bbQ$-ideals generalize valuative ideals, and the following exercise formalizes this point of view.

\begin{exer}
(i) Let $\gamma$ be a valuative $\bbQ$-ideal. Show that it is a usual ideal (i.e. $\gamma=v_\cI$ for an ideal $\cI$) if and only if $\gamma$ is both a valuative ideal and a $\bbQ$-ideal. In particular, $\gamma$ is a $\bbQ$-ideal if and only if $d\gamma$ is an ideal for some $d$.

(ii) Show that the multiplicative monoid of $\bbQ$-ideals is uniquely divisible and hence it is the divisible hull of the monoid of integrally closed ideals. In particular, $\bbQ$-ideals can be safely presented in the form $\cI^{1/d}$, where $\cI$ is an ideal (or its integral closure).

(iii) A marked ideal $(\cI,d)$ can be viewed as the $\bbQ$-ideal $\cI^{1/d}$: show that this correspondence agrees with the usual operations on marked ideals -- inclusion, multiplication and summation (homogenized or not).

(iv) Give an example of a valuative ideal, which is not a $\bbQ$-ideal. (Hint: one can take $X=\Spec(k[x,y])$ and $\gamma=\max(0,\min(v_x,v_y-v_x))$ -- the valuative ideal corresponding to the ideal $(x,y/x)$ on $Bl_{(x,y)}(X)$. Then the minimal $\bbQ$-ideal containing $\gamma$ is $\min(v_x,v_y)$.)
\end{exer}

\begin{rem}
(i) In fact, the notion of a $\bbQ$-ideal is nothing else but a formalization of Hironaka's notion of idealistic exponent. As we saw, this can be done in two ways -- either realize them as valuative $\bbQ$-ideals of a special form, or as roots of usual ideals considered up to integral closure.

(ii) The classical order reduction of a marked ideal $(\cI,d)$ can now be formalized as reducing the order of the $\QQ$-ideal $\cI^{1/d}$ below 1 by blowing up smooth centers $\cJ$ and factoring out their pullback. Our interpretation in \S\ref{classicalfirst} is that one reduces below $d$ the order of $\cI$ itself by blowing up centers $\cJ^d$. These interpretations just differ by normalization.
\end{rem}

\subsubsection{Normalized blowings up}
Normalizing blowings up of $\QQ$-ideals to stay in the category of normal schemes one obtains the following universal property, see \cite[Theorem~3.4.3]{Quek-Rydh}.

\begin{theor}\label{normth}
If $\cI^{1/d}$ is a $\QQ$-ideal on a normal variety $X$ and $X'=Bl_{\cI^{1/d}}(X)^\nor$ is the normalized blowing up, then $\cI^{1/d}\cO_{X'}$ is an invertible ideal and $f\:X'\to X$ is the universal morphism of normal schemes with this property: if $g\:T\to X$ is a morphism of normal schemes such that $g^{-1}(X\setminus V(\cI))$ is dense in $T$, then there exists at most one factorization of $g$ through $f$ and it exists if and only if $\cI^{1/d}\cO_T$ is an invertible ideal.
\end{theor}

In particular, we obtain a generalization of Exercise \ref{smoothweightedex}(ii).

\begin{rem}
This universal property immediately implies that the normalized blowing up along $\cI^{1/d}$ can be described using the usual normalized blowing up $Y=Bl_\cI(X)^\nor\to X$ and the normalized root stack construction $Bl_{(\cI\cO_Y)^{1/d}}(Y)\to Y$: the first makes the pullback of $\cI$ invertible and the second extracts the $d$-th root.
\end{rem}

\subsection{Rees algebras and Rees blowings up}
In order to describe non-embedded resolution in the most precise way, we should also consider non-normal varieties and the weighted blowings up induced on them from weighted blowings up of the ambient manifold. This theory was developed by Rydh in his work on Nagata compactification for stacks and later by Quek-Rydh in \cite{Quek-Rydh}. One has to switch to the language of arbitrary (non-normal) Rees algebras. The reader can safely skip (or just look through) this section; it will only be used to formulate the non-embedded weighted resolution in the most precise way.

\subsubsection{Rees algebras}
The $\bbQ$-ideals can also be interpreted in terms of another classical object -- Rees algebra. As in \cite{Quek-Rydh}, by a {\em Rees algebra} on a variety $X$ we mean a finitely generated graded $\cO_X$-algebra $$R=\oplus_{d\in\bbN} I_dt^d\subseteq\cO_X[t]$$ such that $I_0=\cO_X$ and the ideals $I_d$ form a decreasing sequence: $I_n\subseteq I_m$ for $m\le n$ (the latter condition is automatic when $R$ is normal). By the support of $R$ we mean $V(R)=V(I_1)$. For any $n\ge 1$ we have that $I_1^n\subseteq I_n\subseteq I_1$, hence $V(I_n)=V(R)$.

If $R$ is normal, then necessarily $X$ is normal and $I_1$ is integrally closed. Conversely, if $X$ is normal then the normalization $R^\nor$ of a Rees algebra is also a Rees algebra on $X$ (for general schemes one should also assume that $X$ is quasi-excellent, or, at least, Nagata). In particular, for any valuative $\QQ$-ideal $\gamma$ on a normal scheme $X$ we have that $\cR_\gamma$ is a normal Rees algebra. The natural construction in the opposite direction associates to a Rees algebra $R$ the valuative $\bbQ$-ideal $\gamma_R=\min_{d\ge 1}d^{-1}v_{I_d}$. In fact, it is a $\bbQ$-ideal which possesses a concrete description in term of generators: if $R$ is generated over $\cO_X$ by $f_1t^{d_1}\..f_nt^{d_n}$, then $\gamma_R=\min(d_1^{-1}v_{f_1}\..d_n^{-1}v_{f_n})$.

\begin{exer}
Show that for any Rees algebra $R$ on a normal variety $X$ one has that $\gamma_R$ is the minimal valuative $\bbQ$-ideal whose Rees algebra equals $R$. Deduce that the constructions $\gamma_R$ and $R_\gamma$ provide a one-to-one correspondence between normal Rees algebras and $\bbQ$-ideals on $X$. Also deduce that for any valuative $\bbQ$-ideal $\gamma$ with $R=\cR_\gamma$ one has that $\gamma_R$ is the maximal $\bbQ$-ideal such that $\gamma_R\le\gamma$.
\end{exer}

Recall that the blowing up of a $\bbQ$-valuative ideal $\gamma$ is determined by its Rees algebra $R=\cR_\gamma$, hence it coincides with the blowing up along the $\QQ$-ideal $\gamma_R$. Thus, the theory of weighted blowings up of normal varieties can be described entirely in terms of $\QQ$-ideals or Rees algebras. To extend this to non-normal varieties one should use non integrally closed Rees algebras.

\subsubsection{Pullbacks}
For any morphism of normal schemes $Y\to X$ and a $\bbQ$-ideal $I^{1/d}$ on $X$ one defines its pullback $(I\cO_Y)^{1/d}$ which we will also denote as $I^{1/d}\cO_Y$ for shortness. It is easy to see that this operation is well defined. Also, for any morphism of schemes $Y\to X$ and a Rees algebra $R=\oplus_d I_d$ on $X$ one defines the pullback $R\cO_Y=\oplus_d I_d\cO_Y$.

\subsubsection{Rees blowings up}
To any Rees algebra $R$ on a variety $X$ one associates the blowing up $X'=\cProj_X(R)$. It comes equipped with the exceptional divisor $E'$ whose invertible ideal will be denoted $I_{E'}$. Namely, for $a_dt^d\in R_d$ the restriction of $I_{E'}$ onto the corresponding chart $[\Spec(R[(a_dt^d)^{-1}])/\Gm]$ corresponds to the $\Gm$-equivariant ideal $(t^{-1})$, where $t^{-1}=a_dt^{d-1}/a_dt^d\in R[(a_dt^d)^{-1}]$.

\subsubsection{The universal property of weighted blowings up}
Weighted blowings up possess a certain universal property similar to the one satisfied by the usual blowings up but also more subtle. The proof is a direct computation, see \cite[Theorem~3.2.9]{Quek-Rydh}.

\begin{theor}\label{univth}
Let $X$ be a variety and $R$ a Rees algebra on $X$ with the weighted blowing up $f\:X'=Bl_\gamma(X)\to X$ and support $V=V(R)$. Then:

(0) $f$ is an isomorphism over $X\setminus V$.

(i) $I_d\cO_{X'}\subseteq I_{E'}^d$ with equality holding for any divisible enough $d$ (in fact, it suffices that $d$ is divisible by the weights of a set of generators).

(ii) If $h\:T\to X$ is a morphism such that $h^{-1}(X\setminus V)$ is schematically dense, then the category of factorizations of $h$ through $X'$ is equivalent to the set of invertible ideals $I_T$ such that $I_d\cO_{T}\subseteq I_{T}^d$ with equality holding for any divisible enough $d$.
\end{theor}

\subsubsection{Strict transforms}
If $Y\to X$ is a morphism of varieties and $R$ is a Rees algebra on $X$ with $V=V(R)$, then the {\em strict transform} of $Y$ with respect to the blowing up $X'=Bl_R(X)\to X$ is the schematic closure of the preimage of $X\setminus V$ in $Y\times_XX'$. As in the classical case, the universal property easily implies the following description of strict transforms, see \cite[Corollary~3.2.14]{Quek-Rydh}.

\begin{cor}\label{strictcor}
Let $Y'\to X'$ be the strict transform of a morphism $h\:Y\to X$ with respect to a Rees blowing up $X'=Bl_R(X)\to X$. Then $Y'=Bl_{R\cO_Y}(Y)$. In addition, $Y'=Y\times_XX'$ whenever $h$ is flat, and $h'\:Y'\to X'$ is a closed immersion whenever $h$ is a closed immersion.
\end{cor}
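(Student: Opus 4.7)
The plan is to deduce all three assertions from the universal property of weighted blowings up stated in Theorem~\ref{univth}, using the schematic density built into the definition of strict transform to verify the density hypothesis of that universal property.

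For the main identification $Y'=Bl_{R\cO_Y}(Y)$, I would first check that the pullback $I_{E'}\cO_{Y'}$ is an honest invertible ideal on $Y'$ and that the equalities $I_d\cO_{Y'}=(I_{E'}\cO_{Y'})^d$ hold for all sufficiently divisible $d$. This is a formally-local check: over any chart $[\Spec(R[(a_dt^d)^{-1}])/\Gm]$ of $X'$, the ideal $I_{E'}$ is generated by the unit $t^{-1}$-multiple $a_dt^{d-1}/a_dt^d$, so its pullback to $Y\times_XX'$ is still locally principal, generated by a non zero divisor on the schematic closure $Y'$ of the locus where $X'\to X$ is an isomorphism (since the relation defining $E'$ becomes generically trivial precisely on $h^{-1}(X\setminus V)$). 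Then by Theorem~\ref{univth}(ii) applied to the composition $Y'\to Y$ the invertible ideal $I_{E'}\cO_{Y'}$ induces a canonical factorization $Y'\to Bl_{R\cO_Y}(Y)$.

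For the reverse map, I would run the universal property in the opposite direction. The composition $Bl_{R\cO_Y}(Y)\to Y\xrightarrow{h} X$ is a morphism whose source carries an invertible ideal satisfying the hypotheses of Theorem~\ref{univth}(ii) for $R$ (since $(R\cO_Y)_d\cO_{Bl_{R\cO_Y}(Y)}$ becomes $I_{E_{Y}}^d$ for divisible $d$), so one gets a unique $Bl_{R\cO_Y}(Y)\to X'$ and hence $Bl_{R\cO_Y}(Y)\to Y\times_XX'$. The preimage of $X\setminus V$ in $Bl_{R\cO_Y}(Y)$ is schematically dense by Theorem~\ref{univth}(0), so this map lands in the schematic closure $Y'$. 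Mutual inverseness of these two maps then follows from the uniqueness clauses in the universal property.

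The remaining two assertions are comparatively formal. When $h$ is flat, flat pullback preserves schematic density of open immersions, so $Y\times_XX'$ equals the schematic closure of its own open subscheme $h^{-1}(X\setminus V)\times_{X\setminus V}(X'\setminus E')$, which is by definition $Y'$. When $h$ is a closed immersion, $Y\times_XX'\to X'$ is a closed immersion, hence so is the closed substack $Y'\hookrightarrow Y\times_XX'\hookrightarrow X'$. The one place I expect to need genuine care is the first step: one must verify that taking the schematic closure does not destroy the equality $I_d\cO_{Y'}=I_{E'}^d\cO_{Y'}$ for divisible $d$, which ultimately rests on the fact that the defining generator of $I_{E'}$ on each chart remains a non zero divisor on $Y'$ because $Y'$ is, by construction, the schematic closure of the locus where $X'\to X$ is an isomorphism.
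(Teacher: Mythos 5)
Your argument is correct and follows exactly the route the paper indicates: the corollary is deduced from the universal property of Rees blowings up (Theorem~\ref{univth}), with the density hypothesis supplied by the definition of strict transform and invertibility of $I_{E'}\cO_{Y'}$ checked via the non-zerodivisor property on the schematic closure; the paper itself gives no further proof, citing Quek--Rydh. One cosmetic point: the schematic density of the preimage of $X\setminus V$ in $Bl_{R\cO_Y}(Y)$ comes from the exceptional divisor being Cartier (its ideal is locally generated by a non-zerodivisor), not literally from part~(0) of Theorem~\ref{univth}, and the equality $I_d\cO_{Y'}=(I_{E'}\cO_{Y'})^d$ for divisible $d$ is automatic under pullback of the equality on $X'$, so the only genuine care is the non-zerodivisor point you already address.
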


\subsubsection{$\bbQ$-regular centers}
Finally, we would like to define $\QQ$-regular centers in an arbitrary variety $Z$. For a tuple $\bff=(f_1\..f_n)$ of functions $f_i\in\Gamma(\cO_Z)$ and a tuple of positive rational numbers $\bfq=(q_1\..q_n)$ consider the Rees algebra $R_{\bff,\bfq}=\oplus_dI_d$ with each $I_d$ generated by all monomials $f_1^{l_1}\ldots f_n^{l_n}$ with $l_1/q_1+\ldots+ l_n/q_n\ge d$. By a {\em $\QQ$-regular center of multiplicity $\bfq=(q_1\..q_n)$} on $Z$ we mean a Rees algebra which is locally of the form $R_{\bff,\bfq}$ so that for any $z\in V(R)$ the images of $f_1\..f_n$ in $m_z/m_z^2$ are linearly independent and the image of the set $$\{f_1^{l_1}\ldots f_n^{l_n}|\ l_1/q_1+\ldots+l_n/q_n<1\}$$ in $\cO_z/I_1\cO_z$ is linearly independent.

By the {\em associated weighted blowing up} we mean the blowing up of the Rees algebra obtained by shifting the weights to $a^{-1}\bfq$ where $a\ge 1$ is the minimal natural number such that each $q_i/a$ is of the form $1/w_i$. Locally this algebra is generated by the elements $f_1t^{w_1}\..f_nt^{w_n}$.

These definitions are compatible with closed immersions into manifolds:

\begin{exer}\label{qregex}
Keep the above notation and fix a closed immersion $Z\into X$ with a smooth $X$.

(i) Show that $R$ extends to a $\QQ$-regular center on $X$ in the following sense: choose any family of parameters $t_1\..t_n$ on $X$ which restricts to $f_1\..f_n$ and consider the $\QQ$-regular center $\gamma=(t_1^{q_1}\..t_n^{q_n})$. Then $R=R_\gamma\cO_Z$. In particular, the Rees blowing up of $Z$ along $R$ is the strict transform of $Bl_\gamma(X)\to X$.

(ii) Moreover, show that geometrically speaking $V(\gamma)$ is contained in $Z$ in the sense that $\gamma\le v_\cI$ for $\cI=I_Z$.

(iii) Conversely, show that any $\QQ$-regular center $\gamma$ on $X$ such that $\gamma\le v_\cI$ restricts to a $\QQ$-regular center on $Z$.

(iv) Finally, show that if $Z$ is normal, then $R^\nor$ corresponds to the $\QQ$-ideal $(f_1^{q_1}\..f_n^{q_n})$.
\end{exer}

\subsection{Non-logarithmic weighted algorithms}
In this section we will outline the simplest dream principalization -- the one without boundary.

\subsubsection{Weighted framework}
The geometric objects are just DM stacks of finite type over $k$, and one uses the usual smoothness and sheaves of derivations $\cD_X=\Der_k(\cO_X,\cO_X)$. Admissible centers are $\bbQ$-regular centers, and such a center $\gamma$ is $\cI$-admissible if $\gamma\le v_I$ (which informally means that $\cI\subseteq\gamma$). Admissible blowings up are weighted blowings up associated with such centers.

\subsubsection{Weighted order}
To complete the framework it remains to classify the centers and introduce an invariant. For this we always order parameters so that the tuple of multiplicities is monotonically increasing: $q_1\le q_2\le\dots \le q_n$. Naturally, the invariant $\word(\gamma)$ of the center is defined to be the ordered tuple $\bfq=(q_1\..q_n)$, and we provide the set of invariants with the lexicographic order, where shorter sequences are larger, for example, $(2)>(2,2)$ (alternatively, one can finish each string with $\infty$, making the invariant more similar to the classical one).

\begin{exer}
Check that the invariant is monotonic: if $\gamma\le\gamma'$, then $\word(\gamma)\le\word(\gamma')$.
\end{exer}

For an arbitrary proper ideal $\cI$ we define the {\em weighted order} $$\word_X(\cI)=\max_{\gamma\le v_\cI}\word(\gamma),$$ where the maximum is over all $\cI$-admissible $\bbQ$-regular centers. The {\em weighted order at a point} is defined by taking the minimum over all neighborhoods: $$\word_x(\cI)=\min_{x\in U}\word_U(\cI|_U).$$ For completeness we also define the invariant in extreme cases: $\word(\cO_X)=(0)$ and $\word(\cI)=()$ if $V(\cI)$ contains a generic point of $X$.

\subsubsection{Weighted order reduction}
Now we can formulate the main properties of the weighted framework, which completely dictate what the algorithm is. In fact, it turns out that there is a unique candidate for the first weighted blowing up and already this blowing up reduces the weighted order.

\begin{theor}\label{wordth}
Let $X$ be a smooth DM stack of finite type over $k$, and let $\cI$ be an ideal on $X$.

(i) The weighted order $\word_X(\cI)=(q_1\..q_n)$ satisfies the following integrality condition: $q_1\in\bbN$ and $q_{i+1}\prod_{j=1}^i(q_j-1)!\in\bbN$ for $2\le i\le n$.

(ii) There exists a unique $\cI$-admissible center $\gamma$ such that $\word(\gamma)=\word_X(\cI)$ and $V(\gamma)$ contains all points $x\in X$ with $\word_x(\cI)=\word_X(\cI)$.

(iii) Let $\gamma$ be the center described by (ii), $X'=wBl_\gamma(X)$ the associated weighted blowing up (Definition \ref{Def:wBl}) and $\cI'=(\gamma\cO_{X'})^{-1}\cI\cO_{X'}$ the transform of $\cI$. Then $\word_{X'}(\cI')<\word_X(\cI)$.

(iv) The center depends smooth functorially on the pair $(X,\cI)$.
\end{theor}

\subsubsection{Weighted principalization}
Part (i) of the theorem implies that the set of invariants of $\word_X$ is a well-ordered subset of $\{0\}\cup(\cup_{n\in\bfN}\bbQ_{>0}^n)$, therefore iterating the weighted order reduction we arrive at the end to the only ideal with zero invariant -- the unit ideal. This yields the main result about weighted principalization.

\begin{theor}\label{dreamth}
There exists a principalization method $\cF$ which associates to any ideal $\cI$ on a smooth DM stack $X$ of finite type over $k$ a sequence of smooth weighted blowings up $\cF(\cI)\:X'=X_n\dashto X_0=X$ such that the following conditions are satisfied.

(i) The sequence is a principalization: $\cI_n=\cO_{X_n}$ and each $X_{i+1}\to X_i$ is $\cI_i$-admissible, where $\cI_0=\cI$ and $\cI_{i+1}$ is the transform of $\cI_i$ for $0\le i\le n-1$.

(ii) The sequence depends smooth functorially on the pair $(X,\cI)$: for any smooth morphism $Y\to X$ with $\cJ=\cI\cO_Y$ the sequence $\cF(\cJ)$ is obtained from the pullback sequence $\cF(\cI)\times_XY$ by omitting all empty weighted blowings up.

(iii) The method requires no history (I call this a dream method): each weighted blowing up $X_{i+1}\to X_i$ depends only on $(X_i,\cI_i)$, but not on $(X_j,\cI_j)$ with $j<i$.
\end{theor}

\subsubsection{Examples when the dream fails}\label{nodream}
In the classical setting no dream principalization algorithm exists. Usually one argues that blowing up a pinch point on the Whitney umbrella $Z=V(x^2-y^2z)\into\bfA^3=X$ creates another pinch point, hence without memory one goes into an endless loop. However, blowing up $X$ along $V(x,y)$ resolves the singularity, which makes the case not fully convincing. Here is W{\l}odarczyk's favorite example which avoids this. Let $X=\bfA^4=\Spec(k[x,y,z,t])$ and $Z=V(x^2-yzt)$, in particular, $G=S_3$ acts on it by permuting $y,z,t$. The singular locus of $Z$ is the union of the $y$, $z$ and $t$-axes, and the origin $O$ is the only $S_3$-invariant smooth center which contains $O$ and lies in the singular locus. As in the case of Whitney umbrella, blowing $O$ up creates another singularity of the same type (in fact, 3 singularities on different charts permuted by $S_3$). This shows that there is no memoryless smooth functorial principalization which only blows up smooth centers.

\begin{exer}
Show that the weighted order at $0$ is $(2,3,3,3)$, the dream algorithm blows up $(x^{1/3},y^{1/2},z^{1/2},t^{1/2})$ and the weighted order drops after this blowing up.
\end{exer}

\subsubsection{Weighted desingularization}
As in the classical case, if $Z\into X$ is a closed generically reduced substack of constant codimension, then the principalization of $\cI_Z\subset\cO_X$ blows up all components of the strict transform of $Z$ at the same blowing up $X_{n+1}\to X_n$. This implies that the strict transform $Z_n\into Z$ of $Z$ is smooth, hence $Z_n\to Z$ is a resolution. By the usual argument of uniqueness of minimal embeddings up to an \'etale correspondence this yields a smooth functorial desingularization method for locally-equidimensional and generically reduced stacks of finite type over $k$.

\subsubsection{Back to schemes}
The stacky structure can be removed by a destackification. As a result one obtains a new desingularization method, which is more efficient than the classical one, but does not possess new theoretical properties. However, we will now see that a minor modification leads to a so-called strong desingularization method.

\subsubsection{Strong weighted desingularization}
When $Z\into X$ is resolved by the classical principalization of $\cI_Z$ the $i$-th center $V_i$ lies in the preimage of $Z$, but it does not have to be contained in the strict transforms $Z_i\into X_i$ of $Z$, and the center $V_i|_{Z_i}=V_i\times_{X_i}Z_i$ of $Z_{i+1}\to Z_i$ is usually singular, see \cite[Example~8.2]{Bierstone-Milman-funct}. The same is true for the weighted principalization: it can happen that $V(\gamma_i)\nsubseteq Z_i$. However, if $X'=wBl_\gamma(X)\to X$ is the weighted order reduction of $\cI=I_Z$, $Z'\into X'$ is the strict transform of $Z$, and $\cI'$ is the transform of $\cI$, then $Z'$ is a closed subset of $V(\cI')$, and hence $\word_{X'}(I_{Z'})\le\word_{X'}(\cI')<\word_X(I_Z)$. Therefore, the more economical way to resolve $Z$ is to proceed at the second stage with $I_{Z'}$ instead of $\cI'$, etc. In such a way we do not achieve principalization of $I_Z$ but just successively modify the strict transform of $Z$. Moreover, not only the $i$-th center $\gamma_i$ is contained in the $i$-th strict transform $Z_i$ (see Exercise~\ref{qregex}), it depends only on the pair $(X_i,Z_i)$ and then the re-embedding principle implies that in fact it only depends on $Z_i$. This provides the following non-embedded dream algorithm.

\begin{theor}
There exists a desingularization method $\cR$ which associates to any locally equidimensional generically reduced DM stack $Z$ of finite type over $k$ a sequence of weighted blowings up with $\bbQ$-regular centers $\cR(Z)\:Z'=Z_n\dashto Z_0=Z$ such that the following conditions are satisfied.

(i) The sequence is a desingularization: $Z'$ is smooth.

(ii) The sequence depends smooth functorially on $Z$: for any smooth morphism $Y\to Z$ the sequence $\cR(Y)$ is obtained from the pullback sequence $\cR(Z)\times_ZY$ by omitting all trivial blowings up.

(iii) The method requires no history: each weighted blowing up $Z_{i+1}\to Z_i$ depends only on $Z_i$.
\end{theor}

\subsubsection{The geometric interpretation}
The algorithm iteratively repeats the same base step: chose a canonical $\QQ$-regular center $R_Z$ depending only on $Z$ and blow it up. This $R_Z$ is in fact the unique $\QQ$-regular center whose multiplicity $(q_1\..q_n)$ is maximal possible and which contains all points of $Z$ with this invariant. Furthermore, this center witnesses the non-smoothness of $Z$ by the fact that it cannot be extended to a thicker center with larger multiplicities. Blowing up this obstacle we kill it, and it turns out that the blown up variety has a smaller maximal non-smoothness obstacle.

\begin{exam}
Take $Z=\Spec(k[t_1\..t_n]/(t_1^{q_1}+\ldots+t_n^{q_n}))$ with $2\le q_1\le q_2\le \dots \le q_n$. The maximal center in this case is induced by the usual ideal $I=(t_1^{q_1}\..t_n^{q_n})$ and its normalized Rees algebra $R=\oplus_d I_d$, where $I_d$ is generated by monomials $t_1^{l_1}\dots t_n^{l_n}$ with $\sum_j l_j/q_j\ge d$ and $I_1=I^\nor$. Clearly, $V(I_1)\into Z$ and the associated weighted blowing up is a stack theoretic refinement of the blowing up along $I_1$. Let also $I_{>1}$ be the ideal generated by the monomials with $\sum_j l_j/q_j>1$. Then the non-smoothness of $Z$ is detected by the fact that the closed immersion $V(I_1)\into Z$ does not extend to a square-zero thickening $V(I_{>1})\into Z$ because $t_1^{q_1}+\ldots+t_n^{q_n}\notin I_{>1}$.
\end{exam}

\subsubsection{Relation to the theory of maximal contact}
Now let us briefly discuss the justification. As we saw, everything follows easily from Theorem~\ref{wordth}. At first glance one might expect that such a result, once correct, should be provable in a few ways, including rather direct ones. The following example by W{\l}odarczyk shows that one should be more careful because such a $\bbQ$-regular center does not exist in positive characteristic, hence an argument should be subtle enough and use the zero characteristic assumption.

\begin{exam}
The example is again... a Whitney umbrella $\cI=(x^2+y^2z)$ on $X=\Spec(k[x,y,z])$, but this time when $k$ is perfect of characteristic 2. As in characteristic zero, the weighted order at the origin is $(2,3,3)$, but this time the whole singular locus $C=V(x,y)$ consists of pinch points because there is an automorphism $(x,y,z)\mapsto (x+ty,y,z+t^2)$ which translates $C$. Clearly, the local centers exist in this case at any point of $C$, but they do not glue to a global one. Moreover, at the generic point $\eta\in C$ the invariant is $(2,2)$ and the trouble hides in the fact that the unit $z\in\cO_\eta$ is not a square. Note also that the invariant at the geometric generic point $\bar \eta$, viewed as a point of $X_{\bar{k(z)}}$, is again $(2,3,3)$. All in all, we see that no nice theory of weighted centers seems to be possible, and, maybe, imperfect residue fields and units whose value at a point is not a $p$-th power should somehow be taken into account. My personal expectation is that the formalism of characteristic exponents might be useful but insufficient.
\end{exam}

Given our current knowledge, when working on \cite{ATW-weighted} and \cite{Quek} the fastest way was to use the classical maximal contact theory both to construct such a center and prove its uniqueness. We do expect that a more direct argument should exist, and looking for it is one of the future projects.

\subsubsection{Construction of the center}\label{centersec}
By the theory of maximal contact, working locally at a point $x\in X$ we define an iterated sequence of maximal contacts to iterated coefficient ideals. Formally speaking, we obtain a neighborhood $H_0$ of $x$, a partial sequence of regular parameters $t_1\..t_n$ and a sequence of ideals $\cI_i$ on $H_i=V(t_1\..t_i)$ of orders $d_i=\ord_x(\cI_i)$ such that $\cI_0=\cI|_{H_0}$, each $t_i|_{H_i}$ is a maximal contact to $\cI_i$, $d_i=\ord_{H_i}(\cI_i)$ for any $0\le i\le n$, in the notation of \S\ref{coeffsec} we have that $$\cI_{i+1}=\sum_{a=0}^{d_i-1}(\cD_X^{\le a}(\cI_i),d_i-a)\mid_{H_{i+1}}$$ is the restriction of the coefficient ideal of $\cI_i$ onto $H_{i+1}$ for any $0\le i\le n-1$, and $\cI_n=0$. In particular, $H_0$ is small enough so that each order accepts its maximum at $x$.

Once the above choices are done, setting $q_i=d_{i-1}/\prod_{j=0}^{i-2}(d_j-1)!$ (in particular, $q_1=d_0$) one obtains that $$\word_{H_0}(\cI)=\word_x(\cI)=(q_1\..q_n)$$ and the center on $H_0$ is $\gamma=(t_1^{q_1}\..t_n^{q_n})$. Note that this datum is very close to the one produced by the classical algorithm: the first blowing up of the non-weighted principalization is along the center $(t_1\..t_n)$ and $(q_1,0;\dots ;q_n,0;\infty)$ is the invariant of Bierstone-Milman for this blowing up.

Certainly, the above sketch only provides a construction and explains the integrality condition and smooth functoriality, but one has to prove that, indeed, $\gamma$ is $\cI$-admissible, of maximal weighted order, and unique. All this is done using standard results from the theory of maximal contact (e.g. \cite[Lemma~4.4.1]{ATW-weighted}) and homogenization (needed to pass from one maximal contact to another). We refer the reader to the proof of \cite[Theorem~5.1.1]{ATW-weighted} for details. These ideas are also recalled in chapter \cite{weightedexpo}, and in chapter \cite{weighted} the weighted principalization is constructed with all proofs and details in the context of cobordant blowings up, when instead of stack-theoretic blowings up one considers the canonical presentation by a torus quotient. Essentially this means that the orbifolds we consider are replaced by their torus equivariant representable covers, hence, being smooth local, the arguments and proofs are essentially the same.

\subsection{Logarithmic weighted algorithms}
The simplest dream algorithm does not use any log structures. In particular, one obtains a principalization of an ideal, which is just an invertible ideal, with an arbitrarily singular support. In addition, one does not obtain resolution of divisors by snc ones. There is a logarithmic refinement of the dream algorithm which addresses these issues. It should certainly exist in the relative setting, but so far the theory was only developed by Quek in \cite{Quek} in the absolute case -- the case of log varieties. This method carefully combines the logarithmic and weighted settings and many things have to be checked, but the main line is to imitate the dream algorithm when working with log varieties and using arbitrary weighted submonomial centers. The reader already saw these ideas, so we will only explain the main novelty.

\subsubsection{The framework}
Naturally, one works with logarithmic DM stacks of finite type over a field, logarithmic derivations and logarithmic smoothness. Admissible centers on a log orbifold $X$ are valuative $\bbQ$-ideals which are locally at $x\in X$ of the form $\gamma=(t_1^{q_1}\..t_n^{q_n},u^{p_1}\..u^{p_r})$, where $t_1\..t_m$ is a regular family of parameters at $x$, and $p_1\..p_r\in(\oM_x)_\bbQ$ are rational monomials. Admissible blowings up are weighted blowings up of such centers as we define below. We will use the shorter notation $\gamma=(t_1^{q_1}\..t_n^{q_n},\cN_\gamma)$, where $\cN_\gamma=(u^{p_1}\..u^{p_r})$ is a Kummer monomial ideal called the {\em monomial type} of $\gamma$. As in the non-logarithmic case, one has a certain freedom in choosing the regular parameters $t_1\..t_n$ and the other data is fixed:

\begin{exer}
Show that both the {\em multiplicity} $(q_1\..q_n)$ of $\gamma$ and the monomial type are uniquely determined by $\gamma$.
\end{exer}

\subsubsection{Weighted submonomial blowings up}
As in the non-logarithmic case, by a {\em weighted submonomial center} we mean an admissible center with $q_i=1/w_i$ for natural weights $w_i\in\bbN$. For an admissible center $\gamma=(t_1^{q_1}\..t_n^{q_n},\cN_\gamma)$ with $q_i=a_i/b_i$ and $a=\lcm(a_1\..a_n)$ the center $a^{-1}\gamma$ is weighted and we define the {\em weighted blowing up along $\gamma$} to be $X'=wBl_\gamma(X)=Bl_{a^{-1}\gamma}(X)$. For any ideal $\cI$ on $X$ such that $\gamma$ is $\cI$-admissible the transform $\cI'=(\gamma\cO_{X'})^{-1}\cI\cO_{X'}$ is defined. Again, a direct chart computation shows that a weighted submonomial blowing up of a log orbifold $X$ outputs a log orbifold $X'$, see \cite[Lemmas~4.1 and 4.2]{Quek}.

\subsubsection{Weighted log order and monomial type}
For an admissible center $\gamma=(t_1^{q_1}\..t_n^{q_n},\cN_\gamma)$ set $\wlogord_X(\gamma)=(q_1\..q_n)$ if $\cN_\gamma=\emptyset$ (i.e. there is no monomial part) and $\wlogord_X(\gamma)=(q_1\..q_n,\infty)$ if $\cN_\gamma$ is non-empty. This invariant and the monomial type provide the following monotonicity: if $\gamma<\gamma'$, then $\wlogord_X(\gamma)\le\wlogord_X(\gamma')$ and in the case of equality one has that $\cN_\gamma\supsetneq\cN_{\gamma'}$

For an arbitrary ideal $\cI$ on $X$ we define the {\em weighted log order} by $$\wlogord_X(\cI)=\max_{\gamma\le v_\cI}\wlogord(\gamma).$$ In addition, $\wlogord(\cO_X)=(0)$ and $\wlogord=()$ if $V(\cI)$ contains a generic point. Furthermore, we define the {\em monomial type} $\cN_\cI$ of $\cI$ to be empty if $\wlogord(\cO_X)$ does not end with $\infty$ and to be the minimal (with respect to inclusion) monomial type of an $\cI$-admissible $\gamma$ such that $\wlogord_X(\cI)=\wlogord(\gamma)$.

\subsubsection{Weighted log order reduction}
Now we can formulate the main theorem about weighted submonomial centers. The only new tool when comparing it to the non-logarithmic analogue is that one should also pay attention to the monomial type of an ideal.

\begin{theor}\label{wlogordth}
Let $X$ be a log smooth DM stack of finite type over $k$, and let $\cI$ be an ideal on $X$.

(i) The weighted log order $\wlogord_X(\cI)=(q_1\..q_n,*)$ and the monomial type $\cN_\cI$ are well defined and the following integrality condition is satisfied: $q_1\in\bbN$ and $q_{i+1}\prod_{j=1}^i(q_j-1)!\in\bbN$ for $2\le i\le n$, and $*$ is either $\emptyset$ or $\infty$.

(ii) There exists a unique $\cI$-admissible center $\gamma$ such that for each point $x\in X$ with $\wlogord_x(\cI)=\wlogord_X(\cI)$ one has that $\wlogord_x(\gamma)=\wlogord_x(\cI)$ and $\cN_x(\gamma)=\cN_x(\cI)$.

(iii) Let $\gamma$ be the center described by (ii), $X'=wBl_\gamma(X)$ the associated weighted blowing up and $\cI'=(\gamma\cO_{X'})^{-1}\cI\cO_{X'}$ the transform of $\cI$. Then $\wlogord_{X'}(\cI')<\wlogord_X(\cI)$.

(iv) The center depends log smooth functorially on the pair $(X,\cI)$.
\end{theor}

\subsubsection{Weighted log principalization}
As in the non-logarithmic case iterating the weighted log order reductions one obtains a dream algorithm for log principalization.

\begin{theor}\label{dreamlogth}
There exists a log principalization method $\cF$ which associates to any ideal $\cI$ on a log smooth DM stack $X$ of finite type over $k$ a sequence of weighted blowings up $\cF(\cI)\:X'=X_n\dashto X_0=X$ such that the following conditions are satisfied.

(i) The sequence is a log principalization: $\cI_n=\cO_{X_n}$ and each $X_{i+1}\to X_i$ is $\cI_i$-admissible, where $\cI_0=\cI$ and $\cI_{i+1}$ is the transform of $\cI_i$ for $0\le i\le n-1$.

(ii) The sequence depends log smooth functorially on the pair $(X,\cI)$: for any log smooth morphism $Y\to X$ with $\cJ=\cI\cO_Y$ the sequence $\cF(\cJ)$ is obtained from the pullback sequence $\cF(\cI)\times_XY$ by omitting all trivial weighted blowings up.

(iii) The method requires no history: each weighted blowing up $X_{i+1}\to X_i$ depends only on $(X_i,\cI_i)$.
\end{theor}

\subsubsection{Strong logarithmic resolution}
The same argument as in the non-logarithmic case shows that applying weighted log order reduction to the strict transforms of $Z\into X$ one obtains the following non-embedded dream log desingularization algorithm

\begin{theor}\label{logdesingth}
There exists a desingularization method $\cR$ which associates to any locally equidimensional and generically log smooth log DM stack $Z$ of finite type over $k$ a sequence of weighted blowings up $\cR(Z)\:Z'=Z_n\dashto Z_0=Z$ such that the following conditions are satisfied.

(i) The sequence is a log desingularization: $Z'$ is log smooth.

(ii) The sequence depends log smooth functorially on $Z$: for any smooth morphism $Y\to Z$ the sequence $\cR(Y)$ is obtained from the pullback sequence $\cR(Z)\times_ZY$ by omitting all trivial blowings up.

(iii) The method requires no history: each weighted blowing up $Z_{i+1}\to Z_i$ depends only on $Z_i$.
\end{theor}

\subsubsection{Justification}
As in the non-logarithmic case described in \S\ref{centersec}, one uses the logarithmic theory of maximal contact (and the sheaf of logarithmic derivations $\cD_X$). Locally at a point $x\in X$ one defines an iterated sequence of logarithmic maximal contacts to the iterated coefficient ideals, obtaining a neighborhood $H_0$ of $x$, a partial sequence of regular parameters $t_1\..t_n$ and a sequence of ideals $\cI_i$ on $H_i=V(t_1\..t_i)$ of orders $d_i=\ord_x(\cI_i)$ such that $\cI_0=\cI|_H$, each $t_i|_{H_i}$ is a logarithmic maximal contact to $\cI_i$, $d_i=\ord_{H_i}(\cI_i)$ for any $0\le i\le n-1$, we have that $$\cI_{i+1}=\sum_{a=0}^{d_i-1}(\cD_X^{\le a}(\cI_i),d_i-a)|_{H_{i+1}}$$ is the restriction of the coefficient ideal of $\cI_i$ onto $H_{i+1}$ for any $0\le i\le n-1$, and $\logord(\cI_n)=\infty$. Then the monomial type $\cN_{\cI_0}$ is defined as the monomial ideal generated by the monomials from the monomial hull $\cM(\cI_n)$ of $\cI_n$.

Once these choices are done, the invariant and the center are read off as follows: $$\wlogord_{H_0}(\cI)=\word_x(\cI)=(q_1\..q_n, *),$$ where $q_i=d_{i-1}/\prod_{j=0}^{i-2}(d_j-1)!$ (in particular, $q_1=d_0$), $*=\emptyset$ if $\cN_{\cI_0}=\emptyset$ and $*=\infty$ otherwise, and the center on $H_0$ is $\gamma=(t_1^{q_1}\..t_n^{q_n},\cN_{\cI_0})$. The justification is analogous, but it works with the logarithmic theory of maximal contact and also addresses the monomial type in the end. Once again we can relate this datum to the first blowing up of the non-weighted logarithmic principalization: the latter blows up the center $(t_1\..t_n,\cN_{\cI_0}^{1/q_1})$ and the invariant at this step is $(q_1\..q_n,*)$, see Remark~\ref{invrem}.

\section{Resolution for quasi-excellent schemes and other categories}\label{othersec}
Throughout the notes we only worked with schemes and stacks of finite type over $k$. In this section we will discuss resolution in the wider context of quasi-excellent schemes and other categories, such as formal schemes, and complex or non-archimedean analytic spaces. We only aim to provide a very brief survey of some tools and literature. In addition, we will discuss directions which were not fully verified so far, but we expect the results to be true. The two main methods we will discuss are via extending the framework, and via black box reduction to (appropriate) quasi-excellent schemes. Note that a rather detailed survey on the second approach can be found in \cite{Temkin-survey} and we will refer to it from time to time. Unfortunately \cite{Temkin-survey} reflects our knowledge from more than ten years ago, and hence considers only the questions of extending the classical methods to wider settings.

\subsection{Reduction to quasi-excellent schemes}
In this subsection we will show that desingularization of objects that look completely non-algebraizable nevertheless follows from desingularization of quasi-excellent schemes. Thus, quite surprisingly, resolution of singularities seems to be a purely algebraic phenomenon. The key observation is that the latter desingularization should be functorial with respect to all {\em regular} morphisms. Also we will see in \S\ref{qesec} that within the class of qe schemes there exist various bootstraps which allow to essentially reduce the problem to resolving algebraic varieties (these methods were used in \cite{Temkin} and \cite{Temkin-qe}). We start with specifying the classes of geometric objects whose desingularization we will discuss, and then we will discuss the reduction to qe schemes.

\subsubsection{Analytic spaces}
Resolution of singularities makes sense in various categories of geometric objects, where appropriate notions of smoothness and blowings up make sense. Probably, the most natural case is that of complex analytic spaces, and Hironaka himself outlined how his proof should be modified in that case. In \cite{Bierstone-Milman} Bierstone-Milman tried to axiomatize various situations in which their desingularization algorithm applies. In particular, they claimed that it applies to algebraic spaces of finite type over a field, analytic spaces -- complex, real and non-archimedean, and certain quasi-analytic objects. This is definitely true with our current knowledge, though I cannot track the original argument in some cases (one cannot use Zariski topology when working with algebraic spaces; also in Berkovich geometry sheaves of differentials are defined with respect to the $G$-topology rather than the usual one).

\subsubsection{Quasi-excellent schemes}
Next, let us discuss the case of schemes not of finite type over a field. Hironaka used formal completion in his original work \cite{Hironaka}, so he had to prove his results for the much wider class of schemes of finite type over local rings $A$ such that the homomorphism $A\to\hatA$ is regular. Recall that a morphism $Y\to X$ of noetherian schemes is {\em regular} if it is flat and has geometrically regular fibers. For morphisms of finite type this notion is equivalent to smoothness, so regularity is the natural generalization of smoothness in the case of large morphisms. Soon after this Grothendieck defined a more general class of {\em quasi-excellent} or {\em qe} rings\footnote{The terminology was introduced later. Grothendieck only used the notion of excellent schemes to denote universally catenary quasi-excellent ones.} by two conditions: $X$ is qe if the following two conditions are satisfied:
\begin{itemize}
\item[(N)] After Nagata: for any integral $Y$ of finite type over $X$ the singular locus of $Y$ is open.
\item[(G)] After Grothendieck (though we saw, that historically it should have been named after Hironaka): for any $x\in X$ the completion homomorphism $\cO_{X,x}\to\hatcO_{X,x}$ is regular.
\end{itemize}
Some discussion and examples related to these properties and their relation to resolution can be found in \cite[\S2.3]{Temkin-survey}.

The main observation of Grothendieck was \cite[${\rm IV}_2$, Proposition~7.9.5]{ega} claiming that even a weakest consistent resolution theory is possible only for qe schemes: if any integral $X$-scheme of finite type possesses a desingularization, then $X$ is qe. Already Grothendieck expressed a hope that any integral qe scheme possesses a resolution and this is widely believed to be true. In characteristic zero, this was proved in \cite{Temkin}, and stronger versions are in \cite{Temkin-qe} and \cite{Temkin-embedded}.\footnote{In general only resolution of qe threefolds is known, see \cite{CP}.} Thus, when working with schemes we will always restrict to the quasi-excellent schemes of characteristic zero. By \cite[${\rm IV}_2$, Proposition~7.8.6(i)]{ega} if $X$ is excellent, then any $X$-scheme of finite type is excellent too, and the same argument applies to qe schemes as well.

\subsubsection{Stacks}
A stack is called {\em qe} if it has a smooth cover by a qe scheme. We tacitly used in these notes that any smooth-functorial algorithm automatically extends from schemes of finite type over a field to stacks of finite type over a field. The same argument applies to qe schemes and stacks. Also, any method which involves stack-theoretic blowings up (e.g. logarithmic or weighted principalization) should be developed in the generality of qe stacks right ahead.

\subsubsection{Formal schemes}\label{formalsec}
We say that a noetherian formal scheme $X=(X,\cO_X)$ is a {\em formal variety} if its closed fiber $X_s=(X,\cO_X/I)$, where $I$ is the maximal ideal of definition, is an algebraic variety. Naturally, when working with more general formal schemes we will have to impose a quasi-excellence restriction. A formal scheme $X$ is {\em qe} if its closed fiber is a qe scheme. This definition is really useful because of the following theorem of Gabber, see \cite{formalqe}: an $I$-adic notherian ring $A$ is qe if and only if $A/I$ is qe. This is a deep fact whose proof uses a weak local resolution of singularities. In particular, it implies that quasi-excellence is preserved by formal completions, and hence also by formal localizations, passing from $A$ to $A\llbracket t\rrbracket$, etc. Part of the difficulty stems from the fact that the G-property along is not satisfied even by passing from $A$ to $A\llbracket t\rrbracket$.

Gabber's theorem implies that if $X$ is qe and $Y$ is of topologically finite type over $Y$, then $Y$ is qe. Before the theorem was available one had to work with clumsier and ad hoc definitions, e.g. see \cite[\S3.1]{Temkin}. An important property of qe formal schemes is that formal localization homomorphisms $A\to A_{\{f\}}$ are regular. The latter condition is extremely important, since it allows to extend to qe formal schemes notions from the theory of schemes which are local with respect to the topology of regular covers. In particular, one can define the notions of singular loci, see \cite[\S3.1]{Temkin}, and regular morphisms. In particular, $Y\to X$ is regular if it is covered by morphisms $\Spf(B_{ij})\to\Spf(A_i)$ with regular homomorphisms $A_i\to B_{ij}$. The general principle is very simple -- use the usual definiiton which works with rings in the affine case and use compatibility of the notion with regular morphisms to globalize.

Blowings up of formal schemes are defined in a similar fashion, see \cite[\S2.1]{Temkin}. If $X=\Spf(A)$ with an $I$-adic $A$, then one simply blows up $\Spec(A)$ and $I$-adically completes, and in general one glues the local construction using that blowings up are compatible with flat morphisms. In particular, this construction is well defined for arbitrary noetherian formal schemes.

\subsubsection{Geometric spaces}
For concreteness we will work with one of the following spaces:
\begin{itemize}
\item[(1)] Quasi-excellent schemes.
\item[(2)] Quasi-excellent formal schemes.
\item[(3)] Complex analytic spaces.
\item[(4)] Non-archimedean analytic spaces introduced by Berkovich, see \cite{berihes}.
\end{itemize}

Certainly there are other possibilities - rigid or adic analytic spaces, Nash spaces, etc.

\subsubsection{Affinoid spaces}
In \S\ref{formalsec} we saw how constructions from the scheme theory can be transferred to formal schemes. It turns out that one can study analytic spaces similarly. This is not surprising in the non-archimedean setting, since Berkovich spaces are pasted from affinoid spaces $\cM(\cA)$, which are spectra of $k$-affinoid algebras. Affinoid algebras are excellent and for any subdomain embedding $\cM(\cB)\into\cM(\cA)$ the homomorphism $\cA\to\cB$ is regular by results of Ducros, see \cite{Ducros-excellent} or \cite[\S\S6.2--6.3]{factorqe}.

Nevertheless, an analogous theory also exists for complex analytic spaces. Each such space $X$ is covered by so-called semi-algebraic Stein compacts $X_i$ (for example, closed subdomains of polydiscs), the ring $A_i=\cO_X(X_i)$ of overconvergent functions on $X_i$ is excellent and controls $X_i$ well enough. Again, if $X_j\subseteq X_i$, then the homomorphism $A_j\to A_i$ is regular. In detail, these claims are checked in \cite[Appendix~B]{factorqe}, and a relative GAGA over Stein compacts is established in \cite[Appendix~C]{factorqe}. In particular, GAGA implies that, as in the formal case, complex blowings up are obtained by analytifying the algebraic ones.

\subsubsection{Reduction to qe schemes}
The general principle is very simple: any desingularization, principalization, etc. method $\calF$, which is functorial with respect to arbitrary regular morphisms, automatically extends to analytic spaces and qe formal schemes. The construction is as follows:
\begin{itemize}
\item[(0)] Cover the space $X$ with affinoid/affine subspaces $X_i$ corresponding to  qe rings $A_i$. If $X$ comes equipped with an ideal $\cI\subseteq\cO_X$ one also obtains an ideal $I_i\subseteq A_i$, etc. Also, cover each $X_i\cap X_j$ with affinoid/affine subspaces $X_{ijk}$.
\item[(1)] Consider the blowing up tower $\cF(\Spec(A_i))$ of $\Spec(A_i)$ and apply the analytification/completion functor to produce a blowing up tower of $X_i$. This step uses the relative GAGA as we need to analytify ideals on blowing up in the tower.
\item[(2)] Show that the obtained towers $\cF(X_i)$ are compatible on intersections because $\cF$ respects the regular morphisms $\Spec(X_{ijk})\to\Spec(X_i)$.
\end{itemize}

In each case some minor details should be spelled out. In particular, this reduction scheme was used in \cite{Temkin-qe}, \cite{Temkin-embedded}, and \cite[\S6]{factorqe}, but it is the latter reference where all details were carefully spelled out (in the case of the factorization functor), a relative GAGA for complex spaces was constructed, etc.

\subsubsection{Non-compact objects}
Finally, we note that functorial methods can be also extended to non-compact spaces, including locally noetherian qe schemes. Clearly, the only way to do this is to cover $X$ by affine subspaces $X_i$ and glue the blowing up sequences for different $X_i$. The technical complication is that the resulting sequence can be infinite; for example, this happens when $X=\coprod_{i=1}^\infty X_i$ and the resolution of $X_i$ takes $i$ steps. However, one can naturally define infinite ordered sequences of blowing up (called hypersequences in \cite[\S5.3]{Temkin-qe}) with the following local finiteness condition: over any compact subspace of $X$ the sequence contains only finitely many non-trivial blowings up. For such a sequence a composition is well defined, and gluing local resolutions of the subspaces $X_i$ one obtains a hypersequence of blowings up of $X$ in general, see \cite[\S5.3]{Temkin-qe}.

\subsection{Extending the framework}\label{extsec}
The second method to extend algorithms developed for algebraic varieties is to directly adopt it to another category. For example, this is what Bierstone and Milman did for the classical method and the categories of analytic spaces. This line of research is very natural, as it just explores in which generality the methods work. Nevertheless, it seems that it is almost not presented in the literature, so I will just express my expectations. In brief, absolutely in line with the philosophy of frameworks I expect that once the framework of the method extends to a wider setting, the method extends as well. So, one should construct appropriate blowings up, generalized ideals and derivation theory.
In case of non-embedded resolution one should also worry that embedding into a smooth space exists. Note also that the theories of orbifolds and logarithmic analytic spaces are folklore and were partially developed in the literature.  Now, let us discuss case by case.

\subsubsection{Analytic spaces}
I expect that, excluding resolution of morphisms, all methods we discussed in these notes extend to complex and non-archimedean analytic spaces. One should use the usual analytic differentials and derivations. Perhaps the shortest way to introduce weighted blowings up is via $\QQ$-ideals and Rees algebras. I do not expect any serious complication.

As for principalization on relative orbifolds and resolution of morphisms of analytic spaces, there is one major obstacle -- the monomialization theorem, see \S\ref{monomsec}. For example, let $B=\bfP^2_\CC$, $B_0=\bfA^2_\CC$ and $Z=V(x-e^y)$ is a curve in $B_0$ which cannot be extended to $B$. Let $X=\bfA^1_{B_0}=\bfA^3_\CC$ with coordinates $x,y,z$ and $\cI$ the ideal on $X$ given by $(x-e^y,z)$. Then principalization of $\cI$ over $B_0$ goes by blowing up $V$ in $B_0$, which increases the log structure by $u^p=x-e^y$ and then blowing up the ideal $(x,u^p)$. However, once trying to principalize $\cI$ on $X\to B$ one is stuck.

I expect that except the monomialization the whole algorithm works fine. As for monomialization there are two hopes/questions which, at the very least, do not contradict any example we know: Does the monomialization theorem holds true when $X\to B$ is proper? Does the monomialization theorem holds for an arbitrary smooth morphism $X\to B$ of smooth analytic spaces if one allows base changes $B'\to B$ of the following more general form: $B'\to B$ is a cover for the topology generated by modifications and open covers?

\subsubsection{Schemes with enough derivations}
Arbitrary qe schemes $X$ may have too nasty absolute derivation theory. Not only, the sheaf $\cD_X=\Der_\QQ(\cO_X,\cO_X)$ does not have to be quasi-coherent, already for $X=\Spec(R)$ with $R$ a DVR it can happen that the sheaf $\cD_X$ has a zero stalk at the closed point, see \cite[Example~2.3.5(ii)]{Temkin-survey}. Certainly, all methods of these notes cannot apply to schemes with such derivation theory, so one should consider only qe schemes whose derivation theory is reach enough.

The following definition was suggested already in \cite[Remark~1.3.1(iii)]{Temkin-qe}: a scheme $X$ {\em has enough derivations} if for any point $x\in X$ all elements of the cotangent space $m_x/m_x^2$ are distinguished by elements of $\cD_{X,x}$. In other words, the homomorphism $\cD_{X,x}\to(m_x/m_x^2)^*$ to the tangent space is onto. Note that since $\cD_X$ is not quasi-coherent, $\cD_{X,x}$ can be strictly smaller than $\Der(\cO_{X,x},\cO_{X,x})$ so this condition asserts that there exist enough derivations in a neighborhood of $x$. It can be shown that schemes with enough derivations are qe and their class is closed under passage to schemes of finite type. I expect that non-logarithmic principalization methods -- classical and weighted, work for general schemes with enough derivations (in the classical case this was conjectured in \cite{Temkin-qe}) and are functorial with respect to arbitrary regular morphisms. Similarly, I expect that the non-embedded methods work for schemes which are \'etale-locally embeddable into regular schemes with enough derivations.

I expect that the same holds true for logarithmic methods and log schemes with enough log derivations, where the latter means that log derivations distinguish both regular and monoidal parameters. In the non-weighted case this was proved in \cite{ATW-relative} as the particular case of \cite[Theorem~1.2.6]{ATW-relative} when the target $B$ is just $\Spec(\QQ)$. In fact, \cite{ATW-relative} is the only paper I am aware of, where the notion of enough derivations was seriously explored. It even studied resolution of morphisms $X\to B$ with enough relative derivations, and showed that it works whenever $\dim(B)\le 1$. In the case of a higher dimensional $B$ a much stronger restriction on derivations is needed -- the so-called abundance of derivations, which also takes into account derivations in ``constant directions'' which distinguish elements of a transcendence basis of $k(x)/k(b)$. This condition cannot be borrowed to analytic spaces, and this explains why our monomialization theorem does not extend to analytic spaces when $\dim(B)>1$.

Finally, we note that, beyond algebraic varieties, the most important case of schemes with enough derivations are schemes of finite type over noetherian complete local rings. We will later see, why this class (or its certain subclasses) are so important.

\subsubsection{Formal schemes}
The treatment of formal schemes should be analogous to schemes, namely, I expect that the methods extend to formal schemes with enough (log) derivations. The most important case is that of formal varieties.

\subsection{Desingularization for quasi-excellent schemes}\label{qesec}
Finally, let us describe the situation with arbitrary qe (formal) schemes. The tools we have described so far do not suffice to establish resolution for them, and it seems that the only way is to use a certain descent from the formal completion via the (G) property. Thus, at first stage one should construct a desingularization or principalization method $\cF_\cC$ on the class $\cC$ of (formal) schemes of finite type over complete local rings or its suitable subclasses (for example, by the method of \S\ref{extsec}), and at the second step one applies descent. The classical solution uses descent of open ideals, and necessarily constructs a new method $\cF$, which is more complicated even on the schemes from $\cC$. This method is called localization or induction on codimension, it goes back to Hironaka's original paper, and (due to my ignorance) it was re-invented in \cite{Temkin}. A natural alternative would be to try to descent more general ideals, since this would just extend $\cF_\cC$ from $\cC$ to the class of all qe schemes. The only known result in this direction is McQuillan's proof that weighted centers indeed satisfy this descent, and hence the weighted resolution and principalization algorithms extend to arbitrary qe schemes, see \cite[Section~VII]{McQuillan}.

\subsubsection{Induction on codimension}
The method is very robust and applies to all algorithms, see \cite[Chapter~IV, \S1]{Hironaka} and \cite{Temkin-survey} (see also \cite[\S4]{Temkin-qe} and \cite[\S3.4]{Temkin-embedded}). In addition, this method is also used in the proof of the monomialization theorem. However, for the sake of concreteness we will only illustrate the idea on the case of principalization.

Assume first that $X=\Spec(A)$ is a local regular qe scheme and $I$ is supported at the closed point $s\in X$. Then the formal completion $\hatX=\Spec(\hatA)$ at the maximal ideal $m$ is a regular scheme and $\hatI=I\hatA$ is supported at the closed point $\hats\in\hatX$. The principalization $\hatX_n\dashto\hatX_0=\hatX$ of $\hatI$, which exists by step one, only blows up centers contained in the fiber over $\hats$. They correspond to ideals $\hatJ_i\subseteq\cO_{\hatX_i}$ open in the $\hatm$-adic topology, hence all these ideals algebraize and the tower is obtained by $m$-adic completion of a tower $X_n\dashto X_0=X$ of blowings up with centers $J_i\subseteq\cO_{X_i}$. It then easily follows by descent that the latter sequence principalizes $I$.

In the general case one proceeds by induction on codimension of the image of non-principalized locus in $X$. First, one considers the finite set of points $x_1\..x_n\in T_0=V(I)$ of minimal codimension, principalizes the pullback of $I$ to $\coprod_{i=1}^n\Spec(\cO_{X,x_i})$ by the method from the previous paragraph and then just blows up the schematic closure of the centers, obtaining a blowing up sequence $X'\dashto X$ which principalizes $I$ over $x_1\..x_n$. The centers do not have to be smooth over specializations of $x_1\..x_n$, but by induction assumption we can assume that principalization (and hence also resolution) has been already constructed for them. Thus, one simply resolves each center before blowing it up. This inserts intermediate blowings up in the sequence $X'\dashto X$ and the algorithm becomes more complicated, but this does not affect the situation over $x_1\..x_n$. At the next step one considers the image $T_1\subset X$ of $V(I')$, where $I'\subseteq\cO_{X'}$ is the transform of $I$. It is a closed subset of $T_0\setminus\{x_1\..x_n\}$. Choose points $y_1\..y_m\in T_1$ of minimal codimension, find a sequence $X''\dashto X'$ which principalizes $I'$ over the preimages of $y_1\..y_m$, etc.

\begin{rem}
(i) As an input for the induction on codimension scheme it suffices to take a principalization method $\cF_\cC$ which applies to regular schemes $X$ of finite type over the spectrum $\Spec(R)$ of a complete local ring and ideals $I\subset\cO_X$ such that $V(I)$ is contained in the preimage of the closed point. This is a serious restriction, which allows to obtain $\cF_\cC$ via a black box construction from principalization of varieties. Indeed, such a scheme can locally be realized as a completion of a variety and $I$ algebraizes to an ideal on the variety, so one can just pullback principalization on the algebraization. The subtle point in this method is independence of algebraization, which is based on a theorem of Elkik, see \cite[\S4]{Temkin-survey}. This method was implemented in \cite[\S3]{Temkin-qe}, though nowadays I think that extending the framework to schemes with enough derivations would be a better solution since it is more robust and easy to generalize to other situations.

(ii) In order to prove that the obtained method is functorial with respect to all regular morphisms one should prove that this is true for the input algorithm which operates with varieties. This turns out to be a bit subtle because there are regular morphisms between varieties defined over different fields $k$, and the classical method uses only $k$-derivations. One solution is to extend the framework by working with absolute derivations over $\QQ$. However, there is also a black box argument which proves that the classical method (and all methods we constructed in these notes) are compatible with arbitrary regular morphisms between varieties. This was worked out in \cite{BMT}.
\end{rem}

\subsubsection{Direct descent}
Finally, let us discuss a hypothetical direct descent. Assume, again that $X$ is a local regular qe scheme and $\hatX$ is its completion, but this time consider an arbitrary ideal $I$ on $X$ with completion $\hatI\subseteq\cO_{\hatX}$. The fact that $\hatI$ comes from $X$ makes it reasonable to hope that any regular functorial principalization of $\hatI$ should descent to $X$. We illustrate this with a wrong argument: if $Y=\hatX\times_X\hatX$ would be noetherian and qe, then the two pullbacks of the principalization to the completion $\hatY$ would coincide with the principalization of $I\cO_\hatY$ (the projections  $\hatY\to\hatX$ are regular in such case), and hence the principalization would descend to $X$ by the flat descent. Unfortunately, $Y$ is usually very far from being noetherian so a completely different argument is needed, but the question if some other form of descent to $X$ is possible seems very natural. I expect that if such descent works in the local case, then one should be able to patch the local solutions on an arbitrary qe $X$ using the N-property.

\appendix

\section{}

\subsection{Integral closure}\label{intclos}
Recall that the integral closure $I^\nor$ of an ideal $I\subseteq A$ consists of all elements $t\in A$ satisfying a monic equation $t^n+\sum_{i=1}^{n}a_it^{n-i}=0$ with $a_i\in I^i$. Any ideal $J$ with $I\subseteq J\subseteq I^\nor$ is called {\em integral } over $I$. Both notions are compatible with localizations and hence extend to sheaves of ideals on schemes. If $\cJ$ is integral over an ideal $\cI$ on a scheme $X$ and $Y\to X$ is a morphism, then $\cJ\cO_Y$ is integral over $\cI\cO_Y$.

\begin{lem}\label{intlem}
Let $X$ be a normal scheme with an invertible ideal $\cI$. Then for any ideal $\cJ$ with $\cI^\nor=\cJ^\nor$, one has that $\cI=\cJ$. In other words, $\cI$ is integrally closed and it is the integral closure only of itself.
\end{lem}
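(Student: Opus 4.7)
The plan is to reduce to a local algebraic statement and then exploit the integral dependence equation together with normality. Both questions are local on $X$, so I would replace $X$ by $\Spec(A)$ for a normal local ring $A$ (in particular a domain), and replace $\cI$ by a principal ideal $(f)$ with $f$ a non-zero-divisor.

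First I would prove the integral closedness claim $\cI^\nor = \cI$, i.e.\ $(f)^\nor = (f)$. Given $t \in (f)^\nor$ with integral equation $t^n + a_1 t^{n-1} + \dots + a_n = 0$ and $a_i \in (f^i)$, write $a_i = f^i b_i$ and divide the equation by $f^n$ in $\Frac(A)$ to get $(t/f)^n + b_1 (t/f)^{n-1} + \dots + b_n = 0$. Thus $t/f$ is integral over $A$, and by normality $t/f \in A$, so $t \in (f)$.

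Next I would derive the main conclusion. Given $\cJ$ with $\cJ^\nor = \cI^\nor$, the first step shows $\cJ \subseteq \cJ^\nor = \cI^\nor = (f)$. Using that $f$ is a non-zero-divisor, every element of $\cJ$ has the form $fb$ for a unique $b \in A$, so setting $K = \{b \in A : fb \in \cJ\}$ gives $\cJ = fK$ and hence $\cJ^i = f^i K^i$. Now $f$ itself lies in $\cI = (f) = \cJ^\nor$, so there is an integral equation $f^n + a_1 f^{n-1} + \dots + a_n = 0$ with $a_i \in \cJ^i = f^i K^i$. Writing $a_i = f^i c_i$ with $c_i \in K^i$ and dividing by $f^n$ (again using that $f$ is a non-zero-divisor) yields $1 + c_1 + c_2 + \dots + c_n = 0$, so $-1 = c_1 + \dots + c_n \in K$. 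Hence $K = A$ and $\cJ = fK = (f) = \cI$.

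There is no real obstacle: the argument is a direct manipulation of integral equations, with the non-zero-divisor property of $f$ and the normality of $A$ doing all the work. The only subtlety worth flagging is the matching identity $\cJ^i = f^i K^i$, which uses that $\cJ$ lies in the principal ideal $(f)$ and that $f$ is a non-zero-divisor to lift elements uniquely; once this is in place, both steps collapse to the same kind of ``divide through by $f^n$ in the fraction field'' computation.
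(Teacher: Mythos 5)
Your proof is correct, and its first half (checking that an invertible ideal on a normal scheme is integrally closed by dividing the integral equation by $f^n$ and invoking normality) is exactly the paper's argument. Where you genuinely diverge is the second half. The paper argues by contradiction: it takes a monic equation of \emph{minimal} degree witnessing that the generator $t$ of $\cI$ is integral over $\cJ$, uses minimality together with the fact that $A$ is local to conclude that each coefficient $c_i=b_i/t^i$ is a non-unit, so that $1+\sum_i c_i$ is a unit, and then gets $t^n(1+\sum_i c_i)=0$, contradicting that $A$ is a domain. You instead introduce the colon ideal $K=\{b\in A: fb\in\cJ\}$, observe $\cJ=fK$ and $\cJ^i=f^iK^i$ (legitimate because $\cJ\subseteq(f)$ and $f$ is a non-zero-divisor), and read off from the integral equation for $f$ over $\cJ$ that $1+\sum_i c_i=0$ with $c_i\in K^i\subseteq K$, hence $-1\in K$, $K=A$ and $\cJ=(f)=\cI$. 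This buys a direct (non-contradiction) argument that uses neither the minimality of the equation nor the locality of $A$ beyond the initial reduction: normality enters only through the first step, and the conclusion $\cJ=(f)$ needs only that $f$ is a non-zero-divisor with $\cJ\subseteq(f)$ and $f\in\cJ^\nor$. It also sidesteps the one delicate point in the paper's proof, namely verifying that all $c_i$ are non-units (where the top coefficient $i=n$ requires a little care), since in your version the identity $1+\sum_i c_i=0$ is itself the desired conclusion rather than the source of a contradiction.
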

\begin{proof}
The claim can be checked locally, so assume that $A$ is a local normal domain, $I=(t)$ is principal and $J$ is an ideal with $I^\nor=J^\nor$. If $s$ is integral over $I$, then it satisfies an equation $s^n+\sum_{i=1}^n a_it^is^{n-i}=0$ and hence $s/t$ satisfies a monic equation over $A$. By the normality of $A$ we have that $s/t\in A$, that is $s\in I$. Thus, $I=I^\nor$.

Assume that $J\neq I^\nor=I$. Then $t\notin J$, but it is integral over $J$ and we take a monic equation $t^n+\sum_{i=1}^nb_it^{n-i}=0$ with $b_i\in J^i\subset I^i=(t^i)$ and minimal possible $n$. By the minimality of $n$ each element $c_i=b_i/t^i$ is not a unit, as otherwise $t^i-b_i/c_i=0$ would be an equation of smaller degree. Hence we obtain that $t^nu=0$, where $u=1+\sum_{i=1}^n c_i$ is a unit, and this yields a contradiction.
\end{proof}

The normality assumption is necessary in the lemma. For example, in $A=\Spec(k[x^2,x^3])$ the principal ideal $(x^2)$ has a non-invertible integral closure $(x^2,x^3)$.

\begin{cor}\label{intcor}
If $\cI$ and $\cJ$ are ideals on a normal scheme $X$ such that $\cJ^\nor=(\cI^d)^\nor$, then $(\Bl_\cI(X))^\nor=(\Bl_\cJ(X))^\nor$.
\end{cor}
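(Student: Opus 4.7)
The plan is to chain together two simple identities for normalized blowings up: $\Bl_{\cI^d}(X)=\Bl_\cI(X)$ and $\Bl_\cK(X)^\nor=\Bl_{\cK^\nor}(X)^\nor$ for any ideal $\cK$ on a normal scheme. Granting these, the corollary is immediate:
\[
\Bl_\cI(X)^\nor = \Bl_{\cI^d}(X)^\nor = \Bl_{(\cI^d)^\nor}(X)^\nor = \Bl_{\cJ^\nor}(X)^\nor = \Bl_\cJ(X)^\nor.
\]
So the work is in verifying the two identities, and the key tool throughout is the universal property of the normalized blowing up: $\Bl_\cK(X)^\nor$ is the universal normal modification $Y\to X$ on which $\cK\cO_Y$ becomes an invertible ideal.

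The first identity $\Bl_{\cI^d}(X)=\Bl_\cI(X)$ is a standard Proj/Veronese argument: the Rees algebra $\oplus_n\cI^{dn}$ is the $d$-th Veronese of $\oplus_n\cI^n$, and the two give the same Proj. This holds for arbitrary $X$ (no normality needed) and passes to normalizations automatically.

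The second identity is where Lemma~\ref{intlem} enters. Setting $Y=\Bl_\cI(X)^\nor$, the ideal $\cI\cO_Y$ is invertible and $Y$ is normal, so Lemma~\ref{intlem} gives $(\cI\cO_Y)^\nor=\cI\cO_Y$. Combined with the general inclusion $\cI^\nor\cO_Y\subseteq(\cI\cO_Y)^\nor$ (which follows by pulling back the integrality equations satisfied by elements of $\cI^\nor$), this forces $\cI^\nor\cO_Y=\cI\cO_Y$, invertible; hence $Y\to X$ factors through $\Bl_{\cI^\nor}(X)^\nor$. Conversely, on $Z=\Bl_{\cI^\nor}(X)^\nor$ the ideal $\cI^\nor\cO_Z$ is invertible, hence integrally closed; combined with the inclusion $\cI^\nor\cO_Z\subseteq(\cI\cO_Z)^\nor\subseteq(\cI^\nor\cO_Z)^\nor=\cI^\nor\cO_Z$, we conclude that $(\cI\cO_Z)^\nor=(\cI^\nor\cO_Z)^\nor$. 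Now Lemma~\ref{intlem}, applied to the invertible ideal $\cI^\nor\cO_Z$ and the ideal $\cI\cO_Z$ on the normal scheme $Z$, yields $\cI\cO_Z=\cI^\nor\cO_Z$, which is invertible. Thus $Z\to X$ factors through $\Bl_\cI(X)^\nor$, giving the reverse inclusion and the identity.

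The main subtlety lies in the converse direction of the second identity: passing from invertibility of $\cI^\nor\cO_Z$ to invertibility of the a priori smaller ideal $\cI\cO_Z$. This is precisely the content of Lemma~\ref{intlem} (normality is essential here, as the non-normal example $k[x^2,x^3]$ with $\cI=(x^2)$, $\cI^\nor=(x^2,x^3)$ shows that the two blowings up can differ without it). Everything else is formal manipulation with Rees algebras and the universal property.
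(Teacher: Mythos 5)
Your proposal is correct and follows essentially the same route as the paper: reduce to the two identities $\Bl_{\cI^d}(X)=\Bl_\cI(X)$ and $\Bl_\cK(X)^\nor=\Bl_{\cK^\nor}(X)^\nor$, and settle the second with Lemma~\ref{intlem} together with the universal property of (normalized) blowings up. The only cosmetic difference is that you justify the power identity via the Veronese of the Rees algebra, while the paper quotes the universal property directly; your spelled-out two-way factorization for the integral-closure step is exactly the content of the paper's terser second case.
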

\begin{proof}
The claim easily follows from the two particular cases: when $\cJ=\cI^d$, and when $\cJ$ is integral over $\cI$. In the first case, for any modification $Y\to X$ we have that $\cJ\cO_Y=(\cI\cO_Y)^d$ is invertible if and only if $\cI\cO_Y$ is invertible. So, by the universal property of blowings up $\Bl_\cI(X)=\Bl_{\cI^d}(X)$. If $\cJ$ is integral over $\cI$, then $\cJ\cO_Y$ is integral over $\cI\cO_Y$, so if $Y$ is normal and one of these ideals is principle, then the other is (and both coincide) by Lemma~\ref{intlem}. The second case follows.
\end{proof}

\bibliographystyle{amsalpha}

\bibliography{lognotes}

\end{document}